\newcommand{\globalcolor}[1]{%
  \color{#1}\global\let\default@color\current@color
}
\definecolor{blush}{rgb}{0.87, 0.36, 0.51}
	\definecolor{brightcerulean}{rgb}{0.11, 0.67, 0.84}
	\definecolor{greenryb}{rgb}{0.4, 0.69, 0.2}
\newif\ifdark
\definecolor{darkred}{rgb}{0.9,0.2,0.2}
\definecolor{darkblue}{rgb}{0.7,0.3,1}
\definecolor{darkgreen}{rgb}{0.1,0.9,0.1}
\definecolor{franck}{rgb}{0,0.8,1}
\definecolor{pagebackground}{rgb}{.15,.21,.18}
\definecolor{pageforeground}{rgb}{.84,.84,.85}
\definecolor{symbols}{rgb}{0,0.7,1}
\colorlet{connection}{red!80!black}
\colorlet{boxcolor}{blue!50}
\definecolor{darkred}{rgb}{0.7,0.1,0.1}
\definecolor{darkblue}{rgb}{0.4,0.1,0.8}
\definecolor{darkgreen}{rgb}{0.1,0.7,0.1}
\definecolor{franck}{rgb}{0,0,1}
\definecolor{pagebackground}{rgb}{1,1,1}
\definecolor{pageforeground}{rgb}{0,0,0}
\colorlet{symbols}{blue!90!black}
\colorlet{connection}{red!30!black}
\colorlet{boxcolor}{blue!50!black}
\def\slash{\leavevmode\unskip\kern0.18em/\penalty\exhyphenpenalty\kern0.18em}
\def\dash{\leavevmode\unskip\kern0.18em--\penalty\exhyphenpenalty\kern0.18em}
\DeclareMathAlphabet{\mathbbm}{U}{bbm}{m}{n}
\DeclareFontFamily{U}{BOONDOX-calo}{\skewchar\font=45 }
\DeclareFontShape{U}{BOONDOX-calo}{m}{n}{
  <-> s*[1.05] BOONDOX-r-calo}{}
\DeclareFontShape{U}{BOONDOX-calo}{b}{n}{
  <-> s*[1.05] BOONDOX-b-calo}{}
\DeclareMathAlphabet{\mcb}{U}{BOONDOX-calo}{m}{n}
\SetMathAlphabet{\mcb}{bold}{U}{BOONDOX-calo}{b}{n}
\setlist{noitemsep,topsep=4pt,leftmargin=1.5em}
\DeclareMathAlphabet{\mathbbm}{U}{bbm}{m}{n}
\DeclareMathAlphabet{\mcb}{U}{BOONDOX-calo}{m}{n}
\SetMathAlphabet{\mcb}{bold}{U}{BOONDOX-calo}{b}{n}
\DeclareFontFamily{U}{mathx}{\hyphenchar\font45}
\DeclareFontShape{U}{mathx}{m}{n}{
      <5> <6> <7> <8> <9> <10>
      <10.95> <12> <14.4> <17.28> <20.74> <24.88>
      mathx10
      }{}
\DeclareSymbolFont{mathx}{U}{mathx}{m}{n}
\DeclareMathSymbol{\bigtimes}{1}{mathx}{"91}
\def\emptyset{{\centernot\ocircle}}
\providecommand{\figures}{false}
{ \ifthenelse{\equal{\figures}{false}} {#1}{\[ {\rm Figure \ missing !} \]} }{}
\def\id{\mathrm{id}}
\def\CH{\mathcal{H}}
\def\CP{\mathcal{P}}
\def\CA{\mathcal{A}}
\def\CC{\mathcal{C}}
\def\CB{\mathcal{B}}
\def\CM{\mathcal{M}}
\def\CT{\mathcal{T}}
\tikzstyle{tinydots}=[dash pattern=on \pgflinewidth off \pgflinewidth]
\tikzstyle{superdense}=[dash pattern=on 4pt off 1pt]
\newcommand{\beq}{\begin{equation}}
\newcommand{\eeq}{\end{equation}}
\newcommand{\T}{\mathbf{T}}
\def\Labe{\mathfrak{e}}
\def\Labn{\mathfrak{n}}
\def\Lab{\mathfrak{L}}
\def\${|\!|\!|}
\newenvironment{DIFnomarkup}{}{} 
\newtheorem{assumption}{Assumption}
\newfont{\indic}{bbmss12}
\def\Nabla_#1{\nabla_{\!#1}}
    \pgfmathsetlength{\pgf@xb}{\pgfkeysvalueof{/pgf/outer xsep}}%
    \pgfmathsetlength{\pgf@yb}{\pgfkeysvalueof{/pgf/outer ysep}}%
\def\symbol#1{\textcolor{symbols}{#1}}
\def\decorate#1#2{
        \ifnum#2>0
    		\foreach \count in {1,...,#2}{
	       	let
				\p1 = (sourcenode.center),
                \p2 = (sourcenode.east),
				\n1 = {\x2-\x1},
				\n2 = {1mm},
				\n3 = {(1.3+0.6*(\count-1))*\n1},
				\n4 = {0.7*\n1}
			in 
        		node[rectangle,fill=symbols,rotate=30,inner sep=0pt,minimum width=0.2*\n2,minimum height=\n2] at ($(sourcenode.center) + (\n3,\n4)$) {}
				}
		\fi
        \ifnum#1>0
    		\foreach \count in {1,...,#1}{
	       	let
				\p1 = (sourcenode.center),
                \p2 = (sourcenode.east),
				\n1 = {\x2-\x1},
				\n2 = {1mm},
				\n3 = {(1.3+0.6*(\count-1))*\n1},
				\n4 = {0.7*\n1}
			in 
        		node[rectangle,fill=symbols,rotate=-30,inner sep=0pt,minimum width=0.2*\n2,minimum height=\n2] at ($(sourcenode.center) + (-\n3,\n4)$) {}
				}
		\fi
}
\tikzset{
    dectriangle/.style 2 args={
        triangle,
        alias=sourcenode,
        append after command={\decorate{#1}{#2}}
    },
    dectriangle/.default={0}{0},
}
\tikzset{
	cross/.style={path picture={ 
  		\draw[symbols]
			(path picture bounding box.south east) -- (path picture bounding box.north west) (path picture bounding box.south west) -- (path picture bounding box.north east);
		}},
root/.style={circle,fill=green!50!black,inner sep=0pt, minimum size=1.2mm},
        dot/.style={circle,fill=pageforeground,inner sep=0pt, minimum size=1mm},
        dotred/.style={circle,fill=pageforeground!50!pagebackground,inner sep=0pt, minimum size=2mm},
        var/.style={circle,fill=pageforeground!10!pagebackground,draw=pageforeground,inner sep=0pt, minimum size=3mm},
        kernel/.style={semithick,shorten >=2pt,shorten <=2pt},
        kernels/.style={snake=zigzag,shorten >=2pt,shorten <=2pt,segment amplitude=1pt,segment length=4pt,line before snake=2pt,line after snake=5pt,},
        rho/.style={densely dashed,semithick,shorten >=2pt,shorten <=2pt},
           testfcn/.style={dotted,semithick,shorten >=2pt,shorten <=2pt},
        renorm/.style={shape=circle,fill=pagebackground,inner sep=1pt},
        labl/.style={shape=rectangle,fill=pagebackground,inner sep=1pt},
        xic/.style={very thin,circle,draw=symbols,fill=symbols,inner sep=0pt,minimum size=1.2mm},
        g/.style={very thin,rectangle,draw=symbols,fill=symbols!10!pagebackground,inner sep=0pt,minimum width=2.5mm,minimum height=1.2mm},
        xi/.style={very thin,circle,draw=symbols,fill=symbols!10!pagebackground,inner sep=0pt,minimum size=1.2mm},
	xies/.style={very thin,rectangle,fill=green!50!black!25,draw=symbols,inner sep=0pt,minimum size=1.1mm},
	xiesf/.style={very thin,rectangle,fill=green!50!black,draw=symbols,inner sep=0pt,minimum size=1.1mm},
        xix/.style={very thin,crosscircle,fill=symbols!10!pagebackground,draw=symbols,inner sep=0pt,minimum size=1.2mm},
        X/.style={very thin,cross,rectangle,fill=pagebackground,draw=symbols,inner sep=0pt,minimum size=1.2mm},
	xib/.style={thin,circle,fill=symbols!10!pagebackground,draw=symbols,inner sep=0pt,minimum size=1.6mm},
	xie/.style={thin,circle,fill=green!50!black,draw=symbols,inner sep=0pt,minimum size=1.6mm},
	xid/.style={thin,circle,fill=symbols,draw=symbols,inner sep=0pt,minimum size=1.6mm},
	xibx/.style={thin,crosscircle,fill=symbols!10!pagebackground,draw=symbols,inner sep=0pt,minimum size=1.6mm},
	kernels2/.style={very thick,draw=connection,segment length=12pt},
	keps/.style={thin,draw=symbols,->},
	kepspr/.style={thick,draw=connection,->},
	krho/.style={thin,draw=symbols,superdense,->},
	krhopr/.style={thick,draw=connection,superdense},
	triangle/.style = { regular polygon, regular polygon sides=3},
	not/.style={thin,circle,draw=connection,fill=connection,inner sep=0pt,minimum size=0.5mm},
	diff/.style = {very thin,draw=symbols,triangle,fill=red!50!black,inner sep=0pt,minimum size=1.6mm},
	diff1/.style = {very thin,dectriangle={1}{0},fill=red!50!black,draw=symbols,inner sep=0pt,minimum size=1.6mm},
	diff2/.style = {very thin,dectriangle={1}{1},fill=red!50!black,draw=symbols,inner sep=0pt,minimum size=1.6mm},
		diffmini/.style = {very thin,rectangle,fill=black,draw=black,inner sep=0pt,minimum size=0.75mm},
	 kernelsmod/.style={very thick,draw=connection,segment length=12pt},
	 rec/.style = {very thin,rectangle,fill=black,draw=black,inner sep=0pt,minimum size=2mm},
	cerc/.style={very thin,circle,draw=black,fill=symbols,inner sep=0pt,minimum size=2mm},
	stars/.style={very thin,star,star points=6,star point ratio=0.5, draw=black,fill=red,inner sep=0pt,minimum size=0.7mm},
	>=stealth,
        }
        \tikzset{
root/.style={circle,fill=black!50,inner sep=0pt, minimum size=3mm},
        circ/.style={circle,fill=white,draw=black,very thin,inner sep=.5pt, minimum size=1.2mm},
        round1/.style={fill=white,outer sep = 0,inner sep=2pt,rounded corners=1mm,draw,text=black,thin,minimum size=1.2mm},
          circ1/.style={circle,fill=red!10,draw=red,very thin,inner sep=.5pt, minimum size=1.2mm},
        rect/.style={fill=white,outer sep = 0,inner sep=2pt,rectangle,draw,text=black,thin,minimum size=1.2mm},
        rect1/.style={fill=white,outer sep = 0,inner sep=2pt,rectangle,draw,text=black,thin,minimum size=1.2mm},
        round2/.style={fill=red!10,outer sep = 0,inner sep=2pt,rounded corners=1mm,draw,text=black,thin,minimum size=1.2mm},
       round3/.style={fill=blue!10,outer sep = 0,inner sep=2pt,rounded corners=1mm,draw,text=black,thin,minimum size=1.2mm}, 
        rect2/.style={fill=black!10,outer sep = 0,inner sep=2pt,rectangle,draw,text=black,thin,minimum size=1.2mm},
        dot/.style={circle,fill=black,inner sep=0pt, minimum size=1.2mm},
        dotred/.style={circle,fill=black!50,inner sep=0pt, minimum size=2mm},
        var/.style={circle,fill=black!10,draw=black,inner sep=0pt, minimum size=3mm},
        kernel/.style={semithick,shorten >=2pt,shorten <=2pt},
         diag/.style={thin,shorten >=4pt,shorten <=4pt},
        kernel1/.style={thick},
        kernels/.style={snake=zigzag,shorten >=2pt,shorten <=2pt,segment amplitude=1pt,segment length=4pt,line before snake=2pt,line after snake=5pt,},
		kernels1/.style={snake=zigzag,segment amplitude=0.5pt,segment length=2pt},
		rho1/.style={densely dotted,semithick},
        rho/.style={densely dashed,semithick,shorten >=2pt,shorten <=2pt},
           testfcn/.style={dotted,semithick,shorten >=2pt,shorten <=2pt},
           visible/.style={draw, circle, fill, inner sep=0.25ex},
        renorm/.style={shape=circle,fill=white,inner sep=1pt},
        labl/.style={shape=rectangle,fill=white,inner sep=1pt},
        xic/.style={very thin,circle,fill=symbols,draw=black,inner sep=0pt,minimum size=1.2mm},
        xi/.style={very thin,circle,fill=blue!10,draw=black,inner sep=0pt,minimum size=1.2mm},
	xib/.style={very thin,circle,fill=blue!10,draw=black,inner sep=0pt,minimum size=1.6mm},
	xie/.style={very thin,circle,fill=green!50!black,draw=black,inner sep=0pt,minimum size=1mm},
	xid/.style={very thin,circle,fill=symbols,draw=black,inner sep=0pt,minimum size=1.6mm},
	edgetype/.style={very thin,circle,draw=black,inner sep=0pt,minimum size=5mm},
	nodetype/.style={very thick,circle,draw=black,inner sep=0pt,minimum size=5mm},
	kernels2/.style={very thick,draw=connection,segment length=12pt},
clean/.style={thin,circle,fill=black,inner sep=0pt,minimum size=1mm},	not/.style={thin,circle,fill=symbols,draw=connection,fill=connection,inner sep=0pt,minimum size=0.8mm},
	>=stealth,
        }
\def\DeclareSymbol#1#2#3{%
	\expandafter\gdef\csname MH@symb@#1\endcsname{\tikzsetnextfilename{symbol#1}%
	\tikz[baseline=#2,scale=0.15,draw=symbols,line join=round]{#3}}%
	\expandafter\gdef\csname MH@symb@#1s\endcsname{\scalebox{0.75}{\tikzsetnextfilename{symbol#1}%
	\tikz[baseline=#2,scale=0.15,draw=symbols,line join=round]{#3}}}%
	\expandafter\gdef\csname MH@symb@#1ss\endcsname{\scalebox{0.65}{\tikzsetnextfilename{symbol#1}%
	\tikz[baseline=#2,scale=0.15,draw=symbols,line join=round]{#3}}}%
	}
\def\<#1>{\ifthenelse{\boolean{mmode}}{\mathchoice{\csname MH@symb@#1\endcsname}{\csname MH@symb@#1\endcsname}{\csname MH@symb@#1s\endcsname}{\csname MH@symb@#1ss\endcsname}}{\csname MH@symb@#1\endcsname}}
 \def\1{\mathbf{\symbol{1}}}
\def\one{\mathbf{1}}
\DeclareMathAlphabet{\mathpzc}{OT1}{pzc}{m}{it}
\def\eqref#1{(\ref{#1})}
        \newcommand{\cnab}{\langle \nabla\rangle_m}
\newcommand*{\bigcdot}{}
\DeclareRobustCommand*{\bigcdot}{%
  \mathbin{\mathpalette\bigcdot@{}}%
}
\newcommand*{\bigcdot@scalefactor}{.5}
\newcommand*{\bigcdot@widthfactor}{1.15}
\newcommand*{\bigcdot@}[2]{%
  \sbox0{$#1\vcenter{}$}
  \sbox2{$#1\cdot\m@th$}%
  \hbox to \bigcdot@widthfactor\wd2{%
    \hfil
    \raise\ht0\hbox{%
      \scalebox{\bigcdot@scalefactor}{%
        \lower\ht0\hbox{$#1\bullet\m@th$}%
      }%
    }%
    \hfil
  }%
}
\def\two{{\<generic>\kern0.05em\<genericb>}}
\def\twoI{{\<Ito>\kern0.05em\<Itob>}}
\def\mail#1{\burlalt{#1}{mailto:#1}}
\declaretheorem[style=definition]{example}
\begin{document}

\title{Low regularity integrators via decorated trees}
\author{Yvonne Alama Bronsard$^1$, Yvain Bruned$^2$, Katharina Schratz$^1$}
\institute{LJLL (UMR 7598), Sorbonne University \and University of Edinburgh\\
Email:\ \begin{minipage}[t]{\linewidth}
\mail{yvonne.alama_bronsard@upmc.fr},\mail{Yvain.Bruned@ed.ac.uk}, \mail{katharina.schratz@sorbonne-universite.fr}.
\end{minipage}} 

\maketitle 

\begin{abstract}
We introduce a general framework of low regularity integrators which allows us to approximate the time dynamics of a large class of equations, including  parabolic and hyperbolic problems, as well as dispersive equations, up to arbitrary high order on general domains. The structure of the local error of the new schemes  is driven by nested commutators which in general require (much)  lower regularity assumptions than classical methods do. Our main idea   lies in embedding the central oscillations of the nonlinear PDE into the numerical discretisation. The latter is achieved by a novel decorated tree formalism  inspired by singular SPDEs with Regularity Structures and allows us to control the nonlinear interactions in the system up to arbitrary high order on the infinite dimensional (continuous) as well as finite dimensional (discrete) level. 
\end{abstract}
\setcounter{tocdepth}{2}
\setcounter{secnumdepth}{4}
\tableofcontents
\section{Introduction}

We consider  a general class of evolution equations under the form 
\begin{equs}\label{ev}
\partial_t u_o - \mathcal{L}_o u_o  = \sum_{\mathfrak{l} \in \Lab_-}\Psi_{o}^{\mathfrak{l}}(\mathbf{u}_{o}^{\mathfrak{l}}) V_{\mathfrak{l}}(x), \quad (t,x) \in  \R \times \Omega, \quad o \in \Lab_+, 
\end{equs}
where $ \Lab_-, \Lab_+ $ are finite sets, $ \Omega \subseteq \R^{d} $,  $ \mathbf{u}_{o}^{\mathfrak{l}} = (u_{\mathcal{o}})_{\mathcal{o} \in \Lab_+^{o,\mathfrak{l}}} $ and $ \Lab_+^{o,\mathfrak{l}} \subset \Lab_+ $.
For every $ (o,\mathfrak{l}) \in \Lab_+ \times \Lab_-$, $ \mathcal{L}_o $ denotes a linear (possibly) unbounded operator,  $\Psi_o^{\mathfrak{l}}$ represents the nonlinearity and $V_{\mathfrak{l}}$ the potential or noise.  The precise assumptions on the operator $\mathcal{L}_o$, the nonlinearity $\Psi_o^{\mathfrak{l}}$ and the potential $ V_{\mathfrak{l}} $ are stated in  Section \ref{sec:lin} below.  We add initial conditions 
$u_o(0) = v_o 
$ 
and when $ \partial \Omega \neq \emptyset $  suitable boundary conditions  which will be encoded in the domain of  the operator  $\mathcal{L}_{o}$.

Evolution equations of type \eqref{ev} are meanwhile  extensively studied in numerical analysis literature and a large variety of  discretisation techniques for their time resolution was proposed, reaching, e.g., from splitting  methods over exponential   integrators up to Runge--Kutta and Lawson   type schemes \cite{Faou12,HNW93,HLW,HochOst10,HLO20,HLRS10,LR04,McLacQ02,SanBook}.  
 While  such \emph{classical} discretisation techniques   provide  a good approximation to smooth solutions,   they often drastically break down at low regularity:   Rough  data and high oscillations in general cause  severe  loss of convergence which leads to huge computational costs hindering in many situations reliable approximations. Nonlinear partial differential equations (PDEs)  at low regularity are at large an ongoing challenge in numerical analysis.
 
In this work we introduce a   general framework of low regularity integrators which allows us to approximate the time dynamics of   \eqref{ev}  up to arbitrary high order under lower regularity assumptions than classical methods, such as Runge--Kutta, splitting, exponential integrators or Lawson schemes, require.   Our new framework  greatly enhances the low regularity framework recently introduced in~\cite{BS} which is restricted to  dispersive equations with periodic boundary conditions and polynomial nonlinearities $ u^\ell \overline u^m$.  Within our new framework we in particular overcome periodic boundary conditions and cover a much larger class of equations,  including for instance parabolic, and hyperbolic problems, as well as dispersive equations. Furthermore, our model   \eqref{ev} allows for noise or potentials and non polynomial nonlinearities.  

In order to bypass the limitations of  \cite{BS} we  introduce novel \emph{commutator structures}. Such commutators were previously used in \cite{FS} for  the derivation of  first- and second-order low regularity integrators for a simplified version of \eqref{ev}  (in particular without potential, i.e., $V_{\mathfrak{l}} \equiv 1$). 
At low order the central oscillations in   \eqref{ev} can  quite easily be computed  and their dominant parts can be extracted ``by hand" (see \cite{FS}). This is however no longer the case at higher order: The control of the underlying oscillations of \eqref{ev} up to arbitrary high order involves a scale of nested, iterated oscillatory integrals depending on the nonlinear interaction between the leading differential operator $ \mathcal{L}_o$ and the nonlinearity $\sum_{\mathfrak{l} \in \Lab_-}\Psi_{o}^{\mathfrak{l}}(\mathbf{u}_{o}^{\mathfrak{l}}) V_{\mathfrak{l}}(x)$ which is in particular challenging on the finite dimensional (discrete)  level.


In order to overcome this, our key idea lies in developing a new class of decorated trees. This formalism is crucial as it allows us to  encode the oscillatory systems of iterated integrals on the continuous as well as on the discrete level. We  control the local error at each order with suitable nested commutators. The decorated trees used in this work are inspired by \cite{BS}. We develop a new framework overcoming the limitations of the formalism developed in  \cite{BS} which heavily relied on Fourier expansion techniques, periodic boundary conditions and polynomial nonlinearities. Our  decorated trees are close in spirit to the ones used in singular SPDEs in \cite{BCCH} where decorations on the nodes encode the drivers/potentials of the equation~\eqref{ev}. 
This work is the second example after \cite{BS} where Regularity Structures decorated trees introduced in \cite{reg,BHZ} appear in context of  numerical analysis which stresses their robustness.
Another key difference with~\cite{BS} lies in the fact that our setting does not allow for a Hopf algebra to conduct the local error analysis. The latter seems  limited to Fourier space and the control of the corresponding Fourier modes. The lack of a Hopf algebra in this work is, however, not  surprising as we push forward low regularity integrators to the much more general setting \eqref{ev}. 


Our main result is the new general low regularity scheme presented in Definition \ref{scheme}  with its error structure given in Theorem \ref{thm:genloc}. The approximation relies crucially on the formulae given in Definition~\ref{def_CK}. We will illustrate our framework on the concrete examples of the Gross--Pitaevskii and the Sine--Gordon equation in full detail   in Section \ref{sec:examples}.
 In the next subsection, we illustre the main ideas of our scheme on a parabolic equation.

\subsection{Linearization and decorated trees}\label{sec:lin}
Before introducing our linearization techniques based on decorated trees, let us first specify the assumptions on the nonlinearity $\Psi$ and on the leading operator $ \mathcal{L}_o$ appearing in \eqref{ev}.

\begin{assumption}[nonlinearity $\Psi$]\label{assump:psi}
    In the following we assume that the  nonlinearity  $ \Psi^{\mathfrak{l}}_{o}(\mathbf{u}^{\mathfrak{l}}_o)$ is tensorized  under the form
\begin{equation}
\label{nonlin}
\Psi_{o}^{\mathfrak{l}}(\mathbf{u}^{\mathfrak{l}}_o) = \mathcal{B}^{\mathfrak{l}}_{o}\left(\prod_{\mathcal{o} \in \Lab_+^{\mathfrak{l},o}} f^{\mathfrak{l}}_{o,\mathcal{o}}(u_{\mathcal{o}})\right), \quad  f^{\mathfrak{l}}_{o,\mathcal{o}} : \, \C \rightarrow \C^J, 
\end{equation}
where $ \left( \mathcal{B}^{\mathfrak{l}}_o \right)_{(o,\mathfrak{l)} \in \Lab_+ \times \Lab_-} $ is a family of (linear) operators and  we use  the  notation 
 $ \prod_i X^{(i)} = \sum_{k \le J}\prod_i X^{(i)}_{k}$ where $ X^{(i)} \in \C^J$.
\end{assumption}

\begin{assumption}[leading operator $ \mathcal{L}_o$]\label{assump:L}
The linear  operator $ \mathcal{L}_o$ shall be defined on a Hilbert space
   $X$  of complex valued  functions $u \in \C$ with norm denoted by $\| \cdot \|$ and domain $D(\mathcal{L}_o)$. To make sense of the exponential functions, we in addition   assume that $ \mathcal{L}_o$ generates a strongly continuous semigroup $\{e^{t \mathcal{L}_o}\}_{t\geq 0}$ of contractions  on $X$.
\end{assumption}
We illustrate the main idea of our general scheme via a simple example that contains already some important challenges which could not be overcome in previous works \cite{BS,FS}. Let us consider the abstract parabolic equation:
\begin{equs} \label{simple_example}
\partial_t u  -  \Delta u = f(u) V, \quad u_0 = v, \quad  (t,x) \in  \R \times \Omega
\end{equs}
with $\Omega \subseteq \R^d$ sufficiently smooth. In the case  of $\partial \Omega \neq \emptyset $ we assume
 suitable boundary conditions encoded in $D(\Delta)$  and shall denote by $V$ a potential or noise.

 The starting point of our new class of schemes is based on Duhamel's formula for \eqref{simple_example} which is given by
\begin{equs}\label{Ek1}
u(t) = e^{t \Delta} v + \int_{0}^{t} 
e^{(t-\xi)\Delta} f(u(\xi)) V d\xi.
\end{equs}
Classical methods are in general based on  Taylor series expansions of  $u(\xi)$ (the solution of  \eqref{simple_example} at time $t=\xi$)  around the initial value $u_0=v$ in the sense that for small $\xi$ we have at first order that
\begin{equs}\label{Tclass}
u(\xi) = v + \mathcal{O}(\xi  u').
\end{equs}
This classical Taylor series expansion, however, requires regularity in the solution since from the PDE \eqref{simple_example} we have
$$
\mathcal{O}(\xi  u') = \mathcal{O}\left(\xi \Delta u \right).
$$
Hence, the numerical scheme will only converge to order one for sufficiently smooth solutions
$$
u \in D(\Delta).
$$
The situation is even worse for higher order methods which are in general based on higher order Taylor series expansions of the solution
\begin{equs}\label{Thigh}
u(\xi) = v + \xi u'(0) + \frac{\xi^2}{2} u''(0) + \ldots + \frac{\xi^r}{r!} u^{(r)}(0)+  \mathcal{O}(\xi^{r+1} u^{(r+1)}).
\end{equs}
By using the PDE \eqref{simple_example} we have at leading order
$$
\mathcal{O}(\xi^{r+1} u^{(r+1)}) = \mathcal{O}\left(\xi^{r+1} \Delta^{r+1} u \right).
$$
Hence, it follows that at order $r+1$ the necessary regularity assumptions 
on the solutions is increased to
\begin{equs}\label{Reghigh}
u \in D(\Delta^{r+1}).
\end{equs}
\begin{remark}\label{re:comp}
Note that \eqref{Reghigh} does not only require sufficiently smooth solutions (i.e., that the $\partial^{2(r+1)}$th  derivative is bounded), but also additional  compatibility conditions   on the boundary whenever $\partial \Omega \neq \emptyset$. For example, for a second order approximation, namely when $r=1$, the classical Taylor expansion \eqref{Thigh} requires 
$$
u\in D(\Delta^2) = \left \{f \in D(\Delta) \,;\, \Delta f \in D(\Delta)\right\}.
$$
In case of $\Omega$ a bounded smooth open set of $\R^d$  equipped with homogeneous Dirichlet boundary conditions this means that
\begin{equs}\label{D^2}
u \in D(\Delta^2) =\left \{f \in (H^2\cap H^1_0)(\Omega)\,;\, \Delta f \in (H^2\cap H^1_0)(\Omega)\right\}.
\end{equs}
Hence,  zero trace of the solution $u$ \emph{and} $\Delta u$ on the boundary is imposed.
\end{remark}
In this manuscript we want to overcome these high regularity assumptions by developing a new class of schemes with improved local error structures.
It is essential to note that if we want to develop such a class of  (high order) methods under low(er) regularity assumptions 
$$
u \in Y \supset  D(\Delta^{r+1})
$$
we cannot base our new schemes on  classical Taylor series expansions of type \eqref{Thigh}.

Our idea to overcome  the regularity assumptions \eqref{Reghigh} lies in embedding the central oscillations of the PDE into the numerical discretisation, while only approximating the lower order terms. More precisely, iterating Duhamel's formula (see also \cite{BS,FS}) we have thanks to \eqref{Ek1} at time $t=\xi$ that
\begin{equs}
u(\xi) = e^{\xi \mathcal{L}} v + \mathcal{O}(\xi)
\end{equs}
where the central oscillations are captured in the term
$$
e^{\xi \mathcal{L}}v
$$
and the remainder $ \mathcal{O}(\xi)$ does not involve any derivatives on the solution.  In detail one has for $ u(\xi) =  e^{\xi \Delta} v
+ A(\xi) $ 
that
\begin{equs} \label{main_equation}
f(u(\xi)) = \sum_{k \leq r} \frac{A(\xi)^{k}}{k!}
f^{(k)}(e^{\xi \Delta} v) + \mathcal{O}(A(\xi)^{r+1})
 \end{equs}
 where we truncate the expansion at  $ r $ which is associated to the order of the scheme one wants to achieve. The remainder $ A(t) $ will corresponds in practice (after a suitable approximation) to a finite sum of iterated integrals only involving bounded operators. 
 
 Plugging \eqref{main_equation} into \eqref{Ek1}, we obtain the following expansion of the exact solution
\begin{equs}\label{new_duhamel}
u(t) = e^{t \Delta} v + \sum_{k \leq r} \int_{0}^{t} 
e^{(t-\xi)\Delta}  \frac{A(\xi)^{k}}{k!}
f^{(k)}(e^{\xi \Delta} v)   V d\xi + \mathcal{O}(t^{r+2}),
\end{equs} 
where the central oscillations are embedded in the interaction of 
$$
e^{(t-\xi)\Delta} \quad \text{and}\quad f^{(k)}(e^{\xi \Delta} v) 
$$
and the remainder $ \mathcal{O}(t^{r+2})$ does, in contrast to classical approximation techniques \eqref{Thigh},   not involve any derivatives on the solution.

The main challenge lies in controlling the oscillatory integrals in \eqref{new_duhamel}
\begin{equs}\label{osciInt}
 \sum_{k \leq r} \int_{0}^{t} 
e^{(t-\xi)\Delta}  \frac{A(\xi)^{k}}{k!}
f^{(k)}(e^{\xi \Delta} v)   V d\xi 
\end{equs} 
on the discrete level. Note that a classical Taylor series expansion of the oscillations $e^{\xi \Delta}$ would again lead to the high regularity assumptions \eqref{Reghigh}. Our key  idea to overcome this lies in  embedding the central oscillations in  \eqref{osciInt} explicitly into our numerical discretization. In order to do so,  in a first step we have to linearise the terms $ f^{(k)}(e^{\xi \Delta} v) $. This will allow us to filter out the dominant oscillations while controlling the approximation error by commutators $ \mathcal{C}[\cdot,\cdot] $ with an improved error structure introduced as follows:
 
\begin{definition}\label{def:comm}
 For a function $ H(v_1,...,v_n) $, $ n \geq 1 $ and a linear operator $ \mathcal{L} $ we set\begin{equs}
 \mathcal{C}\left[H, \mathcal{L}\right](v_1,...,v_n) = - \mathcal{L}(H(v_1,...,v_n)) + \sum_{i=1}^{n} D_i H(v_1,...,v_n) \cdot \mathcal{L} v_i 
 \end{equs} where $ D_i H $ stands for the partial differential of $ H $ with respect to the variable $ v_i $.
We  define the iterated commutator $ \mathcal{C}^{k}\left[ H , \mathcal{L}\right] $ recursively by
 \begin{equs}
  \mathcal{C}^{k+1}\left[ H , \mathcal{L}\right] =  \mathcal{C}^{k}\left[ \mathcal{C}\left[ H,\mathcal{L} \right] , \mathcal{L}\right], \quad \mathcal{C}^{0}\left[ H , \mathcal{L}\right] = H .
 \end{equs}
\end{definition}

Let us illustrate this definition with our example \eqref{simple_example}. With the above notation (and $ \mathcal{L} = \Delta $) one has for  sufficiently smooth  $ f $:
 \begin{equs} \label{commutator} 
 \begin{aligned}
 f(e^{\xi \Delta} v) & =  \sum_{\ell = 0}^{r} \frac{\xi^{\ell}}{\ell!} e^{\xi \Delta} \mathcal{C}^{\ell}[f,\Delta](v)  + \mathcal{O}(\xi^{r+1} \mathcal{C}^{r+1}[f,\Delta](v)).
 \end{aligned}
 \end{equs}
 This linearisation produces an error of the form $$ \mathcal{C}^{r+1}[f,\Delta](v) $$ which in general requires less regularity and less compatibility conditions on the boundary than does a classical Taylor-series expansion  of the propagator $ e^{\xi \Delta} $. Indeed, up to order $r$ we have
 \begin{equs}\label{TaylorExp}
 e^{\xi \Delta} = \sum_{k \leq r} \frac{\xi^k}{k!} \Delta^k + \mathcal{O}( \xi^{r+1} \Delta^{r+1}),
 \end{equs}
whereby
$$
  D\left( \mathcal{C}^{r+1}[f,\Delta] \right  ) \supset D( \Delta^{k+1}).$$
  Hence, the commutator expansion \eqref{commutator} requires less regularity assumptions and compatibility conditions than classical Taylor expansion of the the propagator $ e^{\xi \Delta} $. We illustrate this through an example in the following two remarks. This observation  also holds true in the general setting \eqref{ev}.
 \begin{remark}\label{rem:domi}
 In general it holds that
 $$D(\mathcal{C}[f,\mathcal{L}] ) \supset D(\mathcal{L}).$$
  Let us for instance recall the example given in \cite[Sec. 2.1]{FS} where $ \mathcal{L} = \Delta $ on the torus $ \T^{d} $ and $ f(u) = u^2 $. Then one has $ D(\mathcal{L}) = H^{2} $ (the classical Sobolev space) and
 \begin{equs}
 \mathcal{C}[f,\mathcal{L}](v) = - \sum_{k=1}^{d} (\partial_k v)^2.
 \end{equs}
If we choose to measure the error in $ L^2 $, in order to bound the above expression we need $ (\partial_k v)^2 \in L^2 $. This follows for $v\in H^{1+\frac{d}{4}}$ by using the injection $H^{1+\frac{d}{4}}\hookrightarrow L^4$. Hence we have $ v \in H^{1+ \frac{d}{4}} \subset D(\mathcal{C}[f,\mathcal{L}]) $.

 We note that the above also holds true when working on a bounded domain of $\R^d$, equipped with homogeneous Dirichlet boundary conditions. Indeed, the same argument as above holds with $D(\Delta) = (H^2 \cap H^1_0)(\Omega)$, and where $v \in (H^{1+\frac{d}{4}} \cap H^1_0)(\Omega) \subset D(\CC[f, \Delta])$.
 \end{remark}
 \begin{remark}
The  commutator term $\CC^{r+1}[f,\Delta](v)$ in general asks for less regularity and compatibility conditions at the boundary than  the classical Taylor's expansion \eqref{TaylorExp}. Indeed, in case of homogeneous Dirichlet boundary conditions for example when $r=1$, the commutator term $\CC^2[f, \Delta](v)$ involves derivatives of at most second order. Namely, formally we have $\CC^2[f, \Delta](v) = O(\Delta v)$, which requires $v \in D(\Delta) = (H^2\cap H^1_0)(\Omega)$. A. classical second-order Taylor's expansion on the other hand asks for $v\in D(\Delta^2)$, which  does not only requires $v$ to have zero trace on the boundary, but also its second derivatives (see also Remark \ref{re:comp}).
 \end{remark}
 
 Let us now turn back to presenting the main idea of our new schemes on the example \eqref{simple_example}: Plugging \eqref{commutator} into \eqref{new_duhamel}, we get
\begin{equs}\label{appi0}
u(t) = e^{t \Delta} v + \sum_{k,\ell \leq r}   \int_{0}^{t} 
e^{(t-\xi)\Delta}  \frac{A(\xi)^{k}}{k!}
 \frac{\xi^{\ell}}{\ell!} \left(  e^{\xi \Delta} \mathcal{C}^{\ell}[f^{(k)},\Delta](v) \right)   V d\xi \\ + \mathcal{O}(t^{r+2} \mathcal{C}^{r+1}[f^{(k)},\Delta](v))
\end{equs}
where the leading error term is driven by the commutator 
$$
\mathcal{O}(t^{r+2} \mathcal{C}^{r+1}[f^{(k)},\Delta](v)).
$$
In general, the latter is more important (i.e., worse in terms of regularity assumptions) than the ones introduce by the approximation of $ A(t) $ (cf. \eqref{main_equation}) which is a polynomial in the initial condition $ v $ involving only bounded operators. Also, one can mention that in general $ \mathcal{L}_o $ is more singular than the $ \CB_o^{\mathfrak{l}} $.

From the approximation \eqref{appi0}, we can collect iterated integrals for building an approximate solution $ w^r(v,t) $ up to order $ r $. For example, when $ r =2 $, we get 
\begin{equs}\label{2_order}
w^{2}(v,t) & =  e^{t \Delta} v +
\int_{0}^{t} 
e^{(t-\xi)\Delta}  \left(  e^{\xi \Delta} f(v) \right)   V d\xi \\ &  + \int_{0}^{t} \xi
e^{(t-\xi)\Delta}  \left(  e^{\xi \Delta} \mathcal{C}[f,\Delta](v) \right)   V d\xi \\ &  + 
\int_{0}^{t}
e^{(t-\xi)\Delta} \left( \int_{0}^{\xi} 
e^{(\xi-\xi_1)\Delta}  \left(  e^{\xi_1 \Delta} f(v) \right)   V d\xi_1 \right) \left(  e^{\xi \Delta} f^{(1)}(v) \right)   V d\xi.
\end{equs}
\begin{remark} \label{rem:Ch}
The remaining challenge lies in embedding the central oscillations triggered by
\begin{equs}
e^{(t-\xi)\Delta}  \left(  e^{\xi \Delta} f(v) \right)V, \quad e^{(t-\xi)\Delta}  \left(  e^{\xi \Delta} \mathcal{C}[f,\Delta](v) \right)V,\\
e^{(t-\xi)\Delta} \left( \int_{0}^{\xi} 
e^{(\xi-\xi_1)\Delta}  \left(  e^{\xi_1 \Delta} f(v) \right)   V d\xi_1 \right) \left(  e^{\xi \Delta} f^{(1)}(v) \right)V   
\end{equs}
into the numerical discretisation.
\end{remark} 

For this purpose we will introduce a recursive map $ \Pi   $ defined on a set of trees $ \CT $ to encode these iterated integrals. The set $ \CT $ will be  decorated trees whose formalism has been introduced in \cite{BHZ} and has been extensively used in \cite{BCCH} for giving wellposedness results for a large class of Stochastic partial differential equations (SPDEs). After the recent work \cite{BS}, it is the second time that one uses this formalism also in context of Numerical Analysis. The iterated integrals for $ w^2 $ are given by trees of size (having at most two edges) two $ \CT^{2} $:
\begin{equs}
\CT^{2} = \Big\lbrace \begin{tikzpicture}[scale=0.2,baseline=-5]
\coordinate (root) at (0,-1);
\coordinate (tri) at (0,1);
\draw[kernels2] (root) -- (tri);
\node[not] (rootnode) at (root) {};
\node[xi] (trinode) at (tri) {};
\end{tikzpicture}, \begin{tikzpicture}[scale=0.2,baseline=-5]
\coordinate (root) at (0,-1);
\coordinate (tri) at (0,1);
\draw[kernels2] (root) -- (tri);
\node[not] (rootnode) at (root) {};
\node[xix] (trinode) at (tri) {};
\end{tikzpicture}, \begin{tikzpicture}[scale=0.2,baseline=-5]
\coordinate (root) at (0,-1);
\coordinate (tri) at (0,1);
\coordinate (tri1) at (0,3);
\draw[kernels2] (root) -- (tri);
\draw[kernels2] (tri) -- (tri1);
\node[not] (rootnode) at (root) {};
\node[xi] (trinode1) at (tri) {};
\node[xi] (trinode) at (tri1) {};
\end{tikzpicture} \Big\rbrace.
\end{equs}
Brown edges encode integrals of the form $ \int^t_0 e^{(t-\xi) \Delta} \cdots d \xi $, white nodes the nonlinearity $ e^{\xi \Delta} f(v) V $ and the nodes marked with a cross correspond to $ \xi \left( e^{\xi \Delta} \mathcal{C}[f,\Delta](v) \right) V $. Incoming edges to a grey dot induce derivatives on $ f $ which will allow us to encode the third term in \eqref{2_order}.

Our decorated trees strongly differ from the ones introduced in \cite{BS}: The scheme introduced in \cite{BS} heavily relies on Fourier series expansion techniques and is therefore restricted to periodic boundary conditions. In particular, the tree structure in \cite{BS} is  based on Fourier decorations. The latter allowed us to easily filter out the dominant frequency interactions in the system   based on the underlying structure of resonances of the PDE. In order to handle the general setting \eqref{ev}, and in particular deal with non periodic boundary conditions and non polynomial nonlinearities, we can, however, not  make use of the previously proposed resonance analysis. Instead, we have to introduce new tools  based on the analysis of the operator interactions and an extensive  use of the commutator based approximation
\eqref{commutator}. Let us illustrate this analysis on the first iterated integral that we denote by $ \left( \Pi \, \begin{tikzpicture}[scale=0.2,baseline=-5]
\coordinate (root) at (0,-1);
\coordinate (tri) at (0,1);
\draw[kernels2] (root) -- (tri);
\node[not] (rootnode) at (root) {};
\node[xi] (trinode) at (tri) {};
\end{tikzpicture} \right) (t)$. It takes the following form
\begin{equs}
\left( \Pi \, \begin{tikzpicture}[scale=0.2,baseline=-5]
\coordinate (root) at (0,-1);
\coordinate (tri) at (0,1);
\draw[kernels2] (root) -- (tri);
\node[not] (rootnode) at (root) {};
\node[xi] (trinode) at (tri) {};
\end{tikzpicture} \right) (t) =   \int_{0}^{t} 
e^{(t-\xi)\Delta}  \prod_{i=1}^2 e^{\xi \mathcal{L}_i} u_i d\xi, \\ \mathcal{L}_1 = \Delta, \quad u_1 = f(v), \quad  \mathcal{L}_2 = 0, \quad u_2 = V. 
\end{equs}
The first step is to distribute $ e^{(t-\xi)\Delta} $ on the product $ \prod_{i=1}^2 e^{\xi \mathcal{L}_i} u_i $:
\begin{equs}\label{2_orderO}
e^{(t-\xi)\Delta} &  \left( \prod_{i=1}^2 e^{\xi \mathcal{L}_i} u_i \right)   = 
\prod_{i=1}^2 e^{\xi \mathcal{L}_i + (t-\xi) \Delta} u_i  \\ & + \sum_{1 \leq k \leq r} \frac{(t-\xi)^k}{k!}  \mathcal{C}^{k}[ (e^{t \Delta} \mathcal{M}_{\lbrace 1\rbrace})(e^{(t-\xi) \Delta} \mathcal{M}_{\lbrace 2\rbrace}), \Delta](u_1,u_2) \\ & + \mathcal{O} \left( t^{r+1} 
 \mathcal{C}^{r+1}[  \mathcal{M}_{\lbrace 1,2\rbrace}, \Delta](u_1,u_2)\right)
\end{equs}
where we have used the following notations: 
Let $ (u_i)_{i \in I} $ and $ (v_j)_{j \in J} $ be two finite sets of functions and $ \mathcal{L}_1 $ and $ \mathcal{L}_2 $ two linear operators. Then we set
\begin{equs}
(\mathcal{L}_1 \mathcal{M}_I \mathcal{L}_2 \mathcal{M}_{J})(u_{i \in I}, v_{j \in J}) := \mathcal{L}_1 \left( \prod_{i\in I} u_i \right) \mathcal{L}_2 \left( \prod_{j \in J} u_j \right)
\end{equs}
and 
\begin{equs}
 \mathcal{C}[ \mathcal{M}_{I}, \mathcal{L}_1](u_{i \in I}) := - \mathcal{L}_1 \mathcal{M}_I (u_{i \in I}) + \sum_{j \in I} \mathcal{M}_I ( ...,\mathcal{L}_1 u_j,... ).
\end{equs}
\begin{remark} 
We can notice that the term $ e^{- \xi \Delta} $ does, however, not make sense (recall that $\xi \geq 0$). 
In the dispersive setting of \cite{BS}, one did not have such an issue (as for instance   $ e^{ i \xi \Delta}$ is well defined for all $\xi \in \R$)  and hence one could perform the resonance analysis only on the terms depending on $ \xi $. In our general setting \eqref{ev} we have to be more careful and take the oscillations of the full operator $e^{(t- \xi) \Delta}$ into account.
\end{remark}

The second step is to identify the dominant part in the operators $ \mathcal{L}_i  - \Delta$ in \eqref{2_orderO}.  For this purpose we set $\CA_{i}= \mathcal{L}_i  - \Delta$ and introduce the splitting   
\begin{equs}
\CA_i = a_i \mathcal{L}_{\tiny{\text{dom}}} + \CA_{\tiny{\text{low}}}^i, \qquad D(\mathcal{L}_{\tiny{\text{dom}}}) \subset D(\mathcal{A}_{\tiny{\text{low}}}^i) 
\end{equs}
where $ a_i \in \lbrace 0,1 \rbrace $.
The purpose of this splitting lies in the fact that in the construction of our schemes we will separate the dominant from the lower order oscillations and only embed the dominant parts exactly into our numerical discretization, while Taylor expanding the lower order parts. Note that in the above example one for instance has that
\begin{equs}
\CA_1 = 0, \quad \CA_2 = - \Delta,
\end{equs}
and hence, $a_1 = 0, \ a_2 = 1$, $\CA_{\tiny{\text{low}}}^i = 0$ for $i\in \{1,2 \}$ and $\CL_{\tiny{\text{dom}}} = -\Delta$. In this simple case one does  not need any further approximation and  one can  write the following  approximation at order $ r $:
\begin{equs}
 \left( \Pi^r \, \begin{tikzpicture}[scale=0.2,baseline=-5]
\coordinate (root) at (0,-1);
\coordinate (tri) at (0,1);
\draw[kernels2] (root) -- (tri);
\node[not] (rootnode) at (root) {};
\node[xi] (trinode) at (tri) {};
\end{tikzpicture} \right) (t) =  \sum_{0 \leq k \leq r} \int_{0}^{t} \frac{(t-\xi)^k}{k!}  \mathcal{C}^{k}[ (e^{t \Delta} \mathcal{M}_{\lbrace 1\rbrace})(e^{(t-\xi) \Delta} \mathcal{M}_{\lbrace 2\rbrace}),\Delta](u_1,u_2) d \xi.
\end{equs}

\begin{remark} In most applications, one does not have to go through the second step of the approximation like the example shown above.
The second step is needed for linear combination of operators like $\partial_x^2 + \partial_x$. One may have to approximate the following iterated integral (with notation $\Delta = \partial_x^2$):
\begin{equs}
\int_{0}^{t} 
e^{(t-\xi)\Delta}  \left(  e^{\xi (\Delta+ \partial_x)} v_2 \right)   V d\xi  .
\end{equs}
This integral is coming from a system of the form:
\begin{equs} \label{equation_A_i}
\partial_t u_1  -  \Delta u_1  & = u_2  V, \quad u_1(0) = v_1, 
\\ \partial_t u_2  -  \Delta u_2 - \partial_x u_2 & =0, \quad u_2(0) = v_2. 
\end{equs}
Then, from \eqref{2_orderO}, we get
\begin{equs}
\sum_{1 \leq k \leq r} \frac{(t-\xi)^k}{k!}  \mathcal{C}^{k}[ (e^{t \Delta + \xi \partial_x } \mathcal{M}_{\lbrace 1\rbrace})(e^{(t-\xi) \Delta} \mathcal{M}_{\lbrace 2\rbrace}), \Delta](v_2,V).
\end{equs}
Now, $\CA_{\tiny{\text{low}}}^1 = \partial_x
$ and one has to Taylor-expand the term $ e^{\xi \partial_x} $. This leads to the following approximation:
\begin{equs}
\int_{0}^t e^{(t-\xi)\Delta}  \left(  e^{\xi (\Delta+ \partial_x)} v_2 \right)   V d\xi &  = \int_{0}^t \sum_{0 \leq k + \ell \leq r} \frac{(t-\xi)^k}{k!} \frac{\xi^{\ell}}{\ell !} \\ & \mathcal{C}^{k}[ (e^{t \Delta } \mathcal{M}_{\lbrace 1\rbrace})(e^{(t-\xi) \Delta} \mathcal{M}_{\lbrace 2\rbrace}), \Delta](\partial_x^{\ell} v_2,V) d \xi
\\ & + \sum_{k+ \ell = r +1}\mathcal{O} \left( t^{r+2} 
 \mathcal{C}^{k}[  \mathcal{M}_{\lbrace 1,2\rbrace}, \Delta](\partial_x^{\ell} v_2,V)\right).
\end{equs}
 \end{remark}
In the next section, we will present our main strategy in the general setting \eqref{ev}.
\subsection{Main strategy of the general numerical scheme}

Duhamel's formulation of \eqref{ev} is given by 
\begin{equs} \label{duhamel_main}
u_o(\tau) = e^{\tau \mathcal{L}_o} v_o + \int_{0}^{\tau} 
e^{(\tau-\xi)\mathcal{L}_o} 
\sum_{\mathfrak{l} \in \Lab_-}\Psi_{o}^{\mathfrak{l}}(\mathbf{u}_{o}^{\mathfrak{l}}) V_{\mathfrak{l}}
d\xi.
\end{equs}
By iterating this formulation together with \eqref{commutator} and \eqref{main_equation}, we get iterated integrals that can be represented via decorated trees. 
   Let $ r + 1$ be the order of the scheme, we first truncate this expansion and get  the following approximation of the solution: 
\begin{equs} \label{decoratedV1}
w^r_o(\hat{\mathbf{v}},\tau) = e^{\tau \mathcal{L}_o} v_o + \sum_{T \in \CV^r}  \left( \Pi T \right)(\hat{\mathbf{v}},\tau)
\end{equs}
where $ \hat{\mathbf{v}}=(\mathbf{v},\mathbf{V}) $, $ \mathbf{v} = (v_o)_{o \in \Lab_+} $, $ \mathbf{V}=  (V_{\mathfrak{l}})_{\mathfrak{l}\in \Lab_-} $, $ \CV^r$ is a finite set of decorated trees, and $ (\Pi T)(\hat{\mathbf{v}},\tau) $ represents the iterated integral associated to $ T $. The exponent $r$ in $ \CV^r $ means that we consider only 
trees of size $ r $ which are the trees producing an iterated integral with $ r $ integrals. Proposition~\ref{exact_solution_order_r} shows that $ w^r_o $ solves \eqref{duhamel_main} with an error involving $ \mathcal{C}^{r+1}[f,\mathcal{L}_o] $ 
with $ f $ a function coming from the coefficients $ \Psi_o $. The main improvement lies in the error structure:  The error $ \mathcal{C}^{r+1}[f,\mathcal{L}_o] $  is in general  better than $ \mathcal{L}_o^r $ in the sense of requiring lower regularity assumptions on the solution.

 The main difficulty boils down to developing  for every $T \in \CV^r$ a suitable approximation to the iterated integrals $ (\Pi T)(\hat{\mathbf{v}},\tau) $ with the aim of minimising the local error structure (in the sense of regularity), see also Remark \ref{rem:Ch}. In order to achieve this, the key idea lies in commutator based approximations
\eqref{commutator} which replace the resonance analysis given in \cite{BS}.
 The approximation of $ (\Pi T) $ is given by a new map on decorated trees denoted by $\Pi^{r}_{A}$ where $r+1$ is the order of the scheme and $A$ is a domain corresponding to the a priori  assumed regularity of $\hat{\mathbf{v}}$. 
Our general scheme  takes the form
\begin{equs} \label{decoratedV2}
w^r_{o,A}(\hat{\mathbf{v}},\tau) & = e^{\tau \mathcal{L}_o} v_o + \sum_{T \in \CV^r}  \left( \Pi_{A}^{r} T \right)(\hat{\mathbf{v}},\tau)
\end{equs}
where the map $ \Pi^{r}_{A} T $ is  a low regularity  approximation of order $ r $ of the map $ \Pi T $ in \eqref{decoratedV1} in the sense that
\begin{equs}\label{eq:loci}
\left(\Pi T - \Pi^{r}_{A} T \right)(\hat{\mathbf{v}},\tau)  = \mathcal{O}\left( \tau^{r+2} \mathcal{L}^{r}_{\text{\tiny{low}}}(T,\hat{\mathbf{v}},A) \right).
\end{equs}
Here $\mathcal{L}^{r}_{\text{\tiny{low}}}(T,\hat{\mathbf{v}},A) $
 involves nested commutators that require in general less regularity than  powers of the full operator $ \mathcal{L}_o^{r} $ (see also Remark \ref{rem:domi}). The scheme \eqref{decoratedV2} and the local error approximations~\eqref{eq:loci} are the main results of this work (see Theorem~\ref{thm:genloc}).
The approximation $ \Pi^{r}_A $ is constructed from a character $ \Pi_A $ defined on the vector space $ \CH $ spanned by  decorated forests taking values in  a space $ \CC $ that will depend on the operators $ \mathcal{L}_o $ and $ \hat{\mathbf{v}} $. We add a decoration $ r $ at the root with the meaning that we will consider an  approximation of order $r$. This is performed by the symbol $ \mathcal{D}^r $ that gives $ \Pi_A \CD^r(T)  = \Pi^{r}_A T$.

As in \cite{BS}, we define the map $ \Pi_A $ recursively from an operator $ \CK $. This operator  computes a suitable approximation (matching the regularity of the solution) of the integrals introduced by the iteration of Duhamel's formula.
Here the main difference is that we perform the analysis directly on the operator without going into  Fourier space. We single out the dominant parts of the operators we are dealing with. These dominant parts are integrated exactly and only  the lower order terms are approximated which allows for an improved local error structure compared to classical approaches. 

For the local error analysis, a structure of Hopf algebra and comodule  was used in \cite{BS} for singling out oscillations. It was based on a variant of the Butcher-Connes-Kreimer coproduct \cite{Butcher72,CK,CKI} inspired by \cite{BHZ}. This structure can be seen  as a deformation of the Butcher-Connes-Kreimer coproduct (see \cite{BM}).
The central object  used as in Quantum Field Theory for the local error analysis in \cite{BS} was a Birkhoff factorisation similar to the one presenting in \cite{BHZ} for recentering iterated integrals. This is put into perspective in the work \cite{BE}. The context of commutators is much more involved than the one in Fourier space.
We therefore do not use such a construction but rather the recursive formulae  for computing the local error. It is given in Definition~\ref{def:Llow}. Via Proposition~\ref{decomp_Pi}, we illustrate what are the dominant operators involved in the computation of the local error.

\subsection{Outline of the paper}

Let us give a short review of the content of this paper.
In Section~\ref{Sec::2}, we introduce the general algebraic framework by first defining a suitable vector space of decorated trees $ \hat{\CT} $ and decorated forests $ \hat \CH $. Next, we set how to compute the dominant part of a set of operators (see Definition~\ref{def:Dom}). We define then the dominant operators associated to a decorated forest (see Definition~\ref{dom_freq}). 
 Then, we introduced new spaces of decorated trees $ \CT $  that we call approximated decorated trees. They carry an extra decoration $ r $ at the root and they represent approximation at order $ r $ of the corresponding iterated integrals. We also introduced new decorated forests based on the same construction.

In Section~\ref{sec::Iterated integrals}, we construct the approximation of the iterated integrals given by the character $ \Pi : \hat \CH \rightarrow  \CC  $ (see \eqref{recursive_Pi}) through the character $ \Pi_A : \CH \rightarrow \CC $ (see \eqref{Pi_A}) where $ A $ is a domain that is the regularity of $ \hat{\mathbf{v}} $. This is the regularity assumed a priori before writing the scheme. The approximation $ \Pi_A $ relies on a recursive construction where the operator  $ \CK $ given in Definition~\ref{def_CK} is heavily used.  The local error analysis which is the error estimate on the difference  between $ \Pi  $ and its approximation $ \Pi_A $ is given in  Theorem~\ref{approxima_tree}. It is built upon a recursive definition (see Definition~\ref{def:Llow}) involving Taylor remainders of $ \Pi_A $ with commutators. They are given in Lemma~\ref{Taylor_bound}. Proposition~\ref{decomp_Pi} gives the structures of the dominant operators appearing in the local error analysis.
In Section~\ref{sec:genScheme}, we introduce truncated series of decorated trees that solves up to order $ r $ equation~\ref{ev} (see Proposition~\ref{exact_solution_order_r}).   Then, from this series built upon the character $ \Pi $, one can write the general scheme (see Definition~\ref{genscheme}) which boils down to replace $  \Pi $ by its approximation $ \Pi_A $. In the end, we compute its local error structure (see Theorem~\ref{thm:genloc}) based on the local error between $ \Pi $ and $ \Pi_A $ for each decorated tree that appears in the expansion of the scheme.

In Section~\ref{sec:examples}, we illustrate the general framework on  various applications. The main example is the Gross-Pitaevskii (GP) equation that includes both a potential and a rough initial data. With our general framework we derive a first and second order scheme for GP and carry out precisely its local error analysis.
We also discuss adapted filter operators for the stability of the low regularity scheme in Section~\ref{stabilisater}.
The last example of this section is the Klein and Sine-Gordon equations which illustrated our framework for non-polynomial nonlinearities.

\subsection*{Acknowledgements}

{\small
This project has received funding from the European Research Council (ERC) under the European Union's Horizon 2020 research and innovation programme (grant agreement No. 850941). YB thanks the Max Planck Institute for Mathematics in the Sciences (MiS) in Leipzig for supporting his research via a long stay in Leipzig from January to June 2022 where part of this work was written.
}

\section{Decorated trees}
\label{Sec::2}

In this section, we introduce the formalism of decorated trees that is used for describing the iterated integrals stemming  from the iteration of the Duhamel's formula~\eqref{duhamel_main} of our main equation~\eqref{ev}. Decorations on the nodes will encode monomials $ \xi^k $ and potentials $ V_{\mathfrak{l}} $ that appear in the equation whereas decorations on the edges encode the various integrals in time. The two spaces of importance are $ \hat \CT $ (resp. $ \hat{\CH} $) space of decorated trees (resp. forests). They correspond to iterated integrals without approximations. We define on these combinatorial objects the dominant operators that are used for the discretisation (see Definition~\ref{dom_freq}). In the end, we consider the spaces $ \CT $ and $ \CH $ obtained by adding decorations at the root. This decoration gives the order at which we want to approximate these iterated integrals.

\subsection{Definitions and dominant operators}

We assume   a finite set $  \Lab$ which has the following splitting $ \Lab = \Lab_+ \sqcup \Lab_-$. We consider a family of differential operators $ (\mathcal{L}_{o})_{o \in \Lab_{+} } $ indexed by $ \Lab_+ $ and a family of potentials $ (V_{\mathfrak{l}})_{\mathfrak{l} \in \Lab_-} $ indexed by $ \Lab_- $. We suppose given $ (\Lab_+^{o,\mathfrak{l}})_{(o,\mathfrak{l}) \in \Lab_{+} \times \Lab_-} $ a collection of subsets of $ \Lab_+ $.
We define the set of decorated trees $ \hat \CT  $ 
as elements of the form  $ 
T_{\Labe}^{\Labn, \mathfrak{f}} =  (T,\Labn,\mathfrak{f},\Labe) $ where 
\begin{itemize}
\item $ T $ is a non-planar rooted tree with root $ \varrho_T $, node set $N_T$ and edge set $E_T$. We denote the leaves of $ T $ by $ L_T $.
\item the map $ \Labe : E_T \rightarrow \Lab_{+} $ are edge decorations. An edge decorated by $ o \in \Lab_+ $ will encode an integral of the form $ \int^{t}_0 e^{(t-s)\mathcal{L}_o} ... ds $. 
\item the map $ \Labn : N_T  \rightarrow \N $ are node decorations. They correspond to monomials of the form $ \xi^k $, $ k \in \N $ that appear in iterated integrals. 
\item the map $ \mathfrak{f} : N_T  \rightarrow \Lab_-  $ are node decorations encoding the various potentials $ V_{\mathfrak{l}} $.
\end{itemize}

When the node decoration $ \Labn $ is omitted, we will denote the decorated trees as
 $ T_{\Labe}^{\mathfrak{f}} $. This set of decorated trees is denoted by $ \hat \CT_0 $.  We say that  $ \bar T_{\bar \Labe}^{\bar{\mathfrak{f}}} $ is a decorated subtree of $ T_{\Labe}^{\mathfrak{f}} \in \hat \CT_0 $ if  $ \bar T $ is a subtree of $ T $ and the restriction of the decorations $  \Labe $ and $ \mathfrak{f} $ of $ T $ to $ \bar T $ are given by $ \bar \Labe $ and $ \bar{\mathfrak{f}} $. We denote by $ \hat{\mathcal{P}} $ the set of planted trees $ T^{\Labn, \mathfrak{f}}_{\Labe} $ that are decorated trees with only one edge connected to the root and with no decorations at the root. Similarly when the node decoration $\Labn$ is omitted we denote this set $ \hat \CP_0 $. We denote by $\hat H $ (and resp. $ \hat H_0 $) the (unordered) forests composed of trees in $ \hat \CP $ (and resp. $ \hat \CP_0 $) (including the empty forest denoted by $ \one $).  Their linear spans are denoted by $\hat \CH $ (and resp. $ \hat \CH_0 $).

In order to represent these decorated trees, we introduce a symbolic notation. An edge decorated by  $  o \in \Lab_{+}$ is denoted by $ \CI_{o} $. The symbol $  \CI_{o} (\cdot) : \hat \CT \rightarrow  \hat{\CH} $ is viewed as  the operation that connects the root of a  decorated tree to a new root with no decorations via an edge decorated by $ o $. Any decorate tree $ T $ admits the following decomposition:
\begin{equs} \label{decomposition_tree}
T = \lambda^{\ell}_{\mathfrak{l}} \prod_{i=1}^m\mathcal{I}_{o_i}(T_i)
\end{equs}
where the $ T_i $ are decorated trees, $ \lambda^{\ell}_{\mathfrak{l}} $ corresponds to the decorations at the root with $ \ell \in \N $ and $ \mathfrak{l} \in \Lab_- $. The product in \eqref{decomposition_tree} is viewed as the forest product. This decomposition means that every decorated tree can be identified with a forest $\prod_{i=1}^m\mathcal{I}_{o_i}(T_i)$ and a decoration  $\lambda^{\ell}_{\mathfrak{l}}$.  When $ \ell = 0 $, we will use the shorthand notation: $ \lambda^{0}_{\mathfrak{l}} = \lambda_{\mathfrak{l}}$. Below, we provide an example of such decorated trees:
\begin{equs} \label{example2}
\lambda^{\ell_0}_{\mathfrak{l}_0} \mathcal{I}_{o_1}(\lambda^{\ell_1}_{\mathfrak{l}_1}) \mathcal{I}_{o_2}(\lambda^{\ell_2}_{\mathfrak{l}_2}) \mathcal{I}_{o_3}(\lambda^{\ell_3}_{\mathfrak{l}_3}) = \begin{tikzpicture}[scale=0.22,baseline=0cm]
         \node at (-6,7)  (f) {}; 
           \node at (0,7)  (g) {}; 
            \node at (6,7)  (e) {}; 
        \node at (0,1) (c) {}; 
     \draw[kernel1] (c) -- node [round1] {\tiny  $o_1$}  (f) ; 
     \draw[kernel1] (c) -- node [round1] {\tiny  $ o_2$}  (g) ; 
          \draw[kernel1] (c) -- node [round1] {\tiny  $  o_3$}  (e) ; 
       \draw (f) node [rect2] {\tiny $(\ell_1,\mathfrak{l}_1)$}  ;
    \draw (g) node [rect2] {\tiny $(\ell_2,\mathfrak{l}_2)$}  ;
      \draw (e) node [rect2] {\tiny $(\ell_3,\mathfrak{l}_3)$}  ;
    \draw (c) node [rect2] {\tiny $(\ell_0,\mathfrak{l}_0)$}  ;
\end{tikzpicture} 
\end{equs}
Iterated integrals will be associated to these decorated trees. In order to approximate them numerically we will have to resolve the underlying oscillations by extracting the dominant parts of the leading operators. In the next definition, we introduce   these dominant parts.

\begin{definition}\label{def:Dom}
Let $\CA_i$, $ i \in \lbrace 1,...,m \rbrace $ be a finite set of operators with domains $ D(\CA_i) $ in the sense that $ \CA_i : D(\CA_i) \subseteq X \rightarrow X $. We define $ \mathcal{P}_{\tiny{\text{dom}}}(\lbrace\CA_1,...,\CA_m \rbrace) $ as follows: We first consider $ I \subset \lbrace 1,...,m \rbrace  $ such that $ \CA_i $,  $ i \in I $ are of smaller domain in the sense that:
\begin{equs}
D(\CA_i) = D(\CA_j), \, i,j \in I, \quad D(\CA_i) \subsetneq D(\CA_\ell)  , \, \, \ell \in \lbrace 1,...,m \rbrace \setminus I, i \in I.
\end{equs}
If  there exists an operator $  \mathcal{L}_{\tiny{\text{dom}}} $ such that for every $ i \in I $
\begin{equs} \label{decomp_1}
\CA_i = \mathcal{L}_{\tiny{\text{dom}}} + \CA_{\tiny{\text{low}}}^i. 
\end{equs}
where $\mathcal{L}_{\tiny{\text{dom}}} $ satisfies $D(\mathcal{L}_{\tiny{\text{dom}}}) \subsetneq D(\CA_{\tiny{\text{low}}}^i)   $, then we set
\begin{equs}
\mathcal{P}_{\tiny{\text{dom}}}(\lbrace\CA_1,...,\CA_m\rbrace) =  \mathcal{L}_{\tiny{\text{dom}}}.
\end{equs}
Otherwise, it is equal to $0 $.
\end{definition}

\begin{remark}
The identity \eqref{decomp_1} has a to be understood as a decomposition between lower and upper part of the operators $ \CA_i $. What is important in order to have a non-zero dominant part is to have a factorisation with some operator $ \mathcal{L}_{\tiny{\text{dom}}} $. It is similar to the approach in Fourier space presented in \cite[Def. 2.2]{BS} when one was looking at the form of the higher monomials of polynomials in the frequencies.
\end{remark}

\begin{example}
We illustrate the previous definition with  $\lbrace 0, \mathcal{L} \rbrace$, where $ \mathcal{L} $ has the domain $ D(\mathcal{L}) \neq X $. Then
\begin{equs}
\mathcal{P}_{\tiny{\text{dom}}}(\lbrace 0, \mathcal{L} \rbrace) = \mathcal{L}
\end{equs}
and for $ \lbrace 0,- \mathcal{L}, \mathcal{L} \rbrace $, one has
\begin{equs}
\mathcal{P}_{\tiny{\text{dom}}}(\lbrace 0,-\mathcal{L}, \mathcal{L} \rbrace) = 0.
\end{equs}
The latter follows as  $ D(-\mathcal{L}) = D(\mathcal{L}) $ and the coefficients in front of $ \mathcal{L} $ are different ($ 1 $ and $ -1 $).
\end{example}

Given a decorated tree, we want to compute its dominant part and lower part using the map $ \mathcal{P}_{\tiny{\text{dom}}} $. We suppose given a space of operators $ \mathcal{Op} $ in $X$ and we denote by $ \mathcal{OS} $ the collection of finite sets of operators belonging to $ \mathcal{Op} $. Given two sets $\CA_1$ and $ \CA_2$, we denote their  union by $  \CA_1 \,\cup \, \CA_2 $.
Given a finite set of operators $ \CA $ and an operator $ \mathcal{L} $, we define $ \mathcal{L} \oplus \CA $ as the set where $\mathcal{L}$ is added to each component of $\CA$:
\begin{equs}
 \mathcal{L} \oplus \CA =  \cup_{\mathcal{L}_o \in \CA} \lbrace\mathcal{L} + \mathcal{L}_o \rbrace.
\end{equs}
 The term $   \CA \ominus \mathcal{L} $ is computed as follows for each component $ \mathcal{L}_o $ of $ \CA $. If the dominant part of $ \mathcal{L}_o $ is equal to $ \mathcal{L}$, in  the sense that one has the following decomposition:
 \begin{equs} \label{decomp_operator}
 \mathcal{L}_{o} = \mathcal{L} + 
 \bar{\mathcal{L}}
 \end{equs}
 with $ D(\mathcal{L}) \subsetneq D(\bar{\mathcal{L}}) $, then
 we replace $ \mathcal{L}_o $ by $ \bar{\mathcal{L}} $. Otherwise, we keep $ \mathcal{L}_o $. 
We define $ \max(\mathcal{L},\CA) $ as the operation that replaces each element $ \mathcal{L}_o $ satisfying \eqref{decomp_operator} by $ \mathcal{L} $ otherwise we replace it by $ 0 $.
Equipped with these notations, we are able to state the main definition for computing the dominant part associated to a decorated tree:

\begin{definition} \label{dom_freq} We recursively define $  \CR_{\text{\tiny{dom}}}^{o} : \hat \CT_{0} \rightarrow  \mathcal{OS}$, $ \CR_{\text{\tiny{dom}}} :  \hat \CP_{0} \rightarrow  \mathcal{OS}  $,  $ \CR_{\text{\tiny{low}}} :  \hat \CP_{0} \rightarrow  \mathcal{OS}  $ and $ \mathcal{L}_{\text{\tiny{dom}}} :  \hat \CP_{0} \rightarrow  \mathcal{Op}  $  as:
\begin{equs}
\CR^o_{\text{\tiny{dom}}}( \lambda_{\mathfrak{l}} \prod_i \CI_{o_i}(T_i)) & = \lbrace  \mathcal{L}_{\bar{o}}, \, \bar{o} \in \mathfrak{L}_+^{o,\mathfrak{l}} \rbrace  \cup\bigcup_i \CR_{\text{\tiny{dom}}}(\mathcal{I}_{o_i}(T_i)),    \\
\mathcal{L}_{\text{\tiny{dom}}}\left( \CI_{o}(T) \right)  & =   \CP_{\text{\tiny{dom}}}\left( -\mathcal{L}_o \oplus \CR^{o}_{\text{\tiny{dom}}}(T) \right), \,
 \\
 \CR_{\text{\tiny{dom}}}\left( \CI_{o}(T) \right)  & =   
\mathcal{L}_o \oplus \max( \mathcal{L}_{\text{\tiny{dom}}},  -\mathcal{L}_o \oplus \CR^o_{\text{\tiny{dom}}}(T) ),
 \\
\CR_{\text{\tiny{low}}}\left( \CI_{o}(T) \right)  & =   
\left( \id \ominus \CP_{\text{\tiny{dom}}} \right) \left( -\mathcal{L}_o \oplus \CR^o_{\text{\tiny{dom}}}(T) \right).
\end{equs}
 We extend these maps to $ \hat{\CT} $ and $ \hat{\CP} $ by ignoring the node decorations $ \Labn $.
\end{definition}

\begin{example}\label{NLS_1}
 We illustrate the previous abstract definitions on some trees stemming from the cubic nonlinear Schr\"odinger equation
\begin{equation}\label{nls}
i \partial_t u + \Delta u  = \vert u\vert^2 u, \quad u_0 = v
\end{equation}
 set on $\R^d$, $1 \leq d \leq 3$. 
For the equation \eqref{nls}, we have two variables which are $ u $ and $ \bar{u} $. Thus, we rewrite \eqref{nls} in the following form:
\begin{equation} \label{nls2}
\begin{aligned}
\partial_t u_o - \mathcal{L}_o u_o  = -i (u_o)^2 u_{\bar o}, \quad u_o(0) = v_o \\
\partial_t u_{\bar o} - \mathcal{L}_{\bar o} u_{\bar o}  =  i ( u_{\bar o} )^2 u_o, \quad u_{\bar o}(0) = v_{\bar o}
\end{aligned}
\end{equation}
where one has $ \mathcal{L}_{o} = i \Delta$, $ \Lab_+ = \lbrace o , \bar o\rbrace $, $ \Lab_- = \lbrace 0 \rbrace $, $ \Lab_+^{o,0} = \Lab_+^{\bar o,0} = \lbrace o , \bar o\rbrace  $ with
$ V_0 = 1 $, $\mathcal{B}^{0}_{o} = \mathcal{B}^{0}_{\bar{o}} = \id$, $ \mathcal{L}_{o} = \mathcal{L} $, $ \mathcal{L}_{\bar o} = -\mathcal{L} $ and $X= L^2(\R^d)$, $D( \mathcal{L}) = H^2$, $ u_o = u  $, $ u_{\bar o} = \overline{u} $, $ v_o = v  $, $ v_{\bar o} = \overline{v} $. The nonlinearities are given by:  $ f_{o,o}^0(u) = -i u^2$, $ f^0_{o,\bar{o}}(\bar u ) = \bar u $,
$ f_{\bar{o},\bar{o}}^0(\bar u) = +i \bar{u}^2$ and  $ f_{\bar{o},o}^0( u ) =  u $. Let us recall the meaning of the subscripts in $ f_{o,\bar{o}}^0 $: The $ 0 $ corresponds to the driver $ V_0 $, $ \bar{o} $ that it appears in the equation for $ \overline{u} $ and the last subscript says that $ f_{o,\bar{o}}^0 $ depends only on the variable $ u $. 
Next we consider the following decorated tree 
 \begin{equs} 
 \label{NLStree1}
 T = \CI_{o}(\lambda_{0}) = \begin{tikzpicture}[scale=0.2,baseline=-5]
\coordinate (root) at (0,-1);
\coordinate (tri) at (0,1);
\draw[kernels2] (root) -- (tri);
\node[not] (rootnode) at (root) {};
\node[xi] (trinode) at (tri) {};
\end{tikzpicture}
\end{equs}
 which encodes the iterated integral:
\begin{equs} \label{int_NLS1}
 -i \int_0^t e^{(t-s)\mathcal{L}} \left( \left( e^{s \CL}v^2 \right) \left(  e^{ - s \CL} \bar{v} \right) \right) ds.
\end{equs}
The brown edge stands for the integral in time $  \int_0^t e^{(t-s)\mathcal{L}} ... ds $. The white dot associated to the potential $ V_0 $ encodes the product $-i\left( e^{s \CL} v^2 \right) \left( e^{ - s \CL} \bar{v}
\right)$. Indeed, it is connected via an edge associated to $ \mathcal{L} $, so we know that we have to consider the nonlinearity associated to $ V_0 $ in the equation for $ u $. One gets from the definition of $ \Lab_+^{o,0} $ 
\begin{equs}
 \CR^o_{\text{\tiny{dom}}} \left(  \begin{tikzpicture}[scale=0.2,baseline=-5]
\coordinate (root) at (0,-0.5);
\node[xi] (rootnode) at (root) {};
\end{tikzpicture} \right) = \lbrace -\mathcal{L},\mathcal{L}\rbrace.
\end{equs}
Then applying the previous definitions (see Definition \ref{dom_freq}), one gets
\begin{equs}
-\mathcal{L} \oplus \CR^o_{\text{\tiny{dom}}}\left( \begin{tikzpicture}[scale=0.2,baseline=-5]
\coordinate (root) at (0,-0.5);
\node[xi] (rootnode) at (root) {};
\end{tikzpicture}  \right) & = \lbrace - 2 \mathcal{L}, 0 \rbrace, \quad
\mathcal{L}_{\text{\tiny{dom}}}\left(  T  \right)   = \CP_{\text{\tiny{dom}}}\left(   \lbrace - 2\mathcal{L},0\rbrace \right) = - 2 \mathcal{L}, \\
\CR_{\text{\tiny{dom}}}\left( T \right)  & =   
\mathcal{L} \oplus \max( \mathcal{L}_{\text{\tiny{dom}}},  -\mathcal{L} \oplus  \lbrace -\mathcal{L},\mathcal{L}\rbrace ) = \lbrace -\mathcal{L},\mathcal{L}\rbrace
\\
\CR_{\text{\tiny{low}}}\left(  T  \right) & = 
(\id \ominus \CP_{\text{\tiny{dom}}}) \left( \lbrace - 2 \mathcal{L}, 0  \rbrace    \right) = \lbrace -2 \mathcal{L},0 \rbrace \ominus - 2 \mathcal{L} = \lbrace 0 \rbrace.
\end{equs}
\end{example}

\begin{remark} One can notice that some of the notations that we have introduced are similar to the one used in \cite{BS} which took a direct inspiration from the algebraic structures developed for singular SPDEs in \cite{BHZ,BCCH}. Our decorations  are however different to \cite{BS}, because we focus on a more general set up no longer restricted to periodic dispersive equations:
\begin{itemize}
 \item The conjugate operator was encoded in \cite{BS} with edge decorations in $ \lbrace -1,1 \rbrace $ which allows the computation of the frequency interactions. Now, we have a larger set of operators by having a bigger set $ \Lab_+ $ with elements such as  $ \overline{o} $ like in the NLS example. One has $ \mathcal{L}_{\overline{o}} = \overline{\mathcal{L}}  = \mathcal{L}_{(o,-1)}$ where the last identity corresponds to the old notation coming from \cite{BS}.
 \item Another change on the edge decorations is that in \cite{BS} certain edges correspond to some integrals in time others not. This excludes the parabolic case. If we rewrite an integral of the form $ \int_0^t e^{(t-s)\mathcal{L}} \cdots ds $ into $ e^{t \mathcal{L}} \int_0^t e^{-s\mathcal{L}} \cdots ds  $ then it does not make sense when $ \mathcal{L} = \Delta $. We recall one decorated tree coming from \cite{BS}:
 \begin{equs} \label{coding_1}
\begin{tikzpicture}[scale=0.2,baseline=-5]
\coordinate (root) at (0,2);
\coordinate (root1) at (0,0);
\coordinate (tri) at (0,-2);
\coordinate (t1) at (-2,4);
\coordinate (t2) at (2,4);
\coordinate (t3) at (0,5);
\draw[kernels2,tinydots] (t1) -- (root);
\draw[kernels2] (t2) -- (root);
\draw[kernels2] (t3) -- (root);
\draw[kernels2] (tri) -- (root1);
\draw[symbols] (root1) -- (root);
\node[not] (rootnode) at (root1) {};
\node[not] (rootnode) at (root) {};
\node[not] (trinode) at (tri) {};
\node[var] (rootnode) at (t1) {\tiny{$ k_{\tiny{1}} $}};
\node[var] (rootnode) at (t3) {\tiny{$ k_{\tiny{2}} $}};
\node[var] (trinode) at (t2) {\tiny{$ k_3 $}};
\end{tikzpicture} & \equiv -i e^{-i t k^2}\int_0^t  e^{i s k^2} \left( e^{i s k_1^2}e^{-i s k_2^2} e^{-i s k_3^2}\right) ds, 
\end{equs} where $ k =  - k_1 + k_2 + k_3 $, the leaves are decorated by the frequencies $ k_1, k_2, k_3 $ and the inner nodes are decorated by $ k $. The blue edge encodes an integral in time $  -i\int_0^t  e^{i s k^2} \cdots ds $, the brown edges are used for a factor $ e^{-is k^2} $ and the dashed brown edges are for $ e^{i s k^2} $. If we consider the same integral not in Fourier mode, one can rewrite it as 
\begin{equs} \label{coding_2}
\begin{tikzpicture}[scale=0.2,baseline=-5]
\coordinate (root) at (0,0);
\coordinate (tri) at (0,-2);
\coordinate (t1) at (-2,2);
\coordinate (t2) at (2,2);
\coordinate (t3) at (0,3);
\draw[kernels2,tinydots] (t1) -- (root);
\draw[kernels2] (t2) -- (root);
\draw[kernels2] (t3) -- (root);
\draw[symbols] (tri) -- (root);
\node[not] (rootnode) at (root) {};
\node[not] (rootnode) at (tri) {};
\node[xi,blue] (rootnode) at (t1) {};
\node[not] (rootnode) at (t3) {};
\node[not] (trinode) at (t2) {};
\end{tikzpicture} & \equiv -i \int_0^t  e^{(t-s)\mathcal{L} } \left(  \left( e^{s \mathcal{L}} v \right)^2 \left( e^{-s \mathcal{L}} \overline{v} \right) \right) ds
\end{equs}
where $\mathcal{L} = i \Delta$, the blue dot corresponds to $ \overline{v} $ and the blue edge now encodes an integral of the form $ \int_0^t e^{(t-s)\mathcal{L}} \cdots ds $. In \eqref{coding_2}, we have to incorporate the initial data $ v $ and $ \bar{v} $ in the definition while they are implicit when the integral is written in Fourier space in \eqref{coding_1} (one has just to multiply the integral with $ \bar{\hat{v}}_{k_1} \hat{v}_{k_2} \hat{v}_{k_3}  $). In fact, the coding given by \eqref{coding_2} is sufficient for cubic NLS, but not for the general case we have in mind that contains potentials and non-polynomial nonlinearities.
\item We add new decorations at the nodes stemming from the drivers $V_{\mathfrak{l}}$ of \eqref{ev}. The initial conditions are associated to the drivers and the equations we are using via the sets $ \Lab_+^{o,\mathfrak{l}} $.  Let us take for  example  a nonlinearity of the form $$f(u)g(\overline{u}) V_{\mathfrak{l}} .$$ Then, after inserting the approximation $ u(s) = e^{s \mathcal{L}} v + A(s) $, one gets thanks to \eqref{commutator}
\begin{equs}\label{nodeDef}
\left( e^{s \mathcal{L}}f(v) \right) \, \left( e^{s \overline{\mathcal{L}}} g(\overline{v}) \right)V_0  \equiv 
 \begin{tikzpicture}[scale=0.2,baseline=-5]
\coordinate (root) at (0,-0.5);
\node[xi] (rootnode) at (root) {};
\end{tikzpicture} = \lambda_0.
\end{equs}
Next if we integrate by $ \int_0^t e^{(t-s)\mathcal{L}} \cdots ds $ encoded by a brown edge, we obtain the following integral when $ f(u) = u^2 $ and $ g(\bar{u}) = \bar{u} $:
\begin{equs} \label{coding_3}
  \CI_{o}(\lambda_{0})= \begin{tikzpicture}[scale=0.2,baseline=-5]
\coordinate (root) at (0,-1);
\coordinate (tri) at (0,1);
\draw[kernels2] (root) -- (tri);
\node[not] (rootnode) at (root) {};
\node[xi] (trinode) at (tri) {};
\end{tikzpicture} \equiv -i \int_0^\tau e^{i(\tau-s)\Delta} \left( \left( e^{is \Delta}v^2 \right) \left(  e^{ - is \Delta} \bar{v} \right) \right) ds
\end{equs}
The main difference between $ \eqref{coding_2} $ and $ \eqref{coding_3} $ is $( e^{is \Delta}v)^2 $ which is replaced by $ e^{is \Delta} v^2 $. We are dealing in \eqref{coding_3} with integrals that contain less factors of the form $ e^{i s \Delta} v $.
\end{itemize}
\end{remark}

\begin{example} \label{ex_Gross} As a second example, let us  consider the Gross--Pitaevskii  equation. The main difference with the cubic NLS equation \eqref {nls} is the adjunction of a   potential $ V $. The equation takes the form:
\begin{equation}\label{nls_bis}
i \partial_t u + \Delta u  = \vert u\vert^2 u + u V, \quad u_0 = v
\end{equation}
 set on $\R^d$, $d \le 3$. Now, one has $ \Lab_-= \lbrace  0,1 \rbrace $,  $ \Lab_+^{o,1} = \lbrace o\rbrace  $,  $ \Lab_+^{\overline{o},1} = \lbrace \overline{o} \rbrace  $ with
$ V_1 = V $. The other sets remain the same, and are defined in Example \ref{NLS_1}.
The new nonlinearities coming from the added potential term are given by:  $ f_{o,o}^1(u) =  -iu$ and $ f_{\bar{o},\bar{o}}^1(\bar u ) = i\bar u $. Hence, together with the nonlinearities defined in  \eqref{nls}, we indeed recover the nonlinear terms of \eqref{nls_bis} from the general form \eqref{ev} :
\begin{equs}
\sum_{\mathfrak{l} \in \Lab_-}\Psi_{o}^{\mathfrak{l}}(\mathbf{u}_{o}^{\mathfrak{l}}) V_{\mathfrak{l}}(x)
&= \Psi_{o}^{0}(\mathbf{u}_{o}^{0}) V_{0}(x) + \Psi_{o}^{1}(\mathbf{u}_{o}^{1}) V_{1}(x) \\
&= f_{o,o}^0(u_{o})f_{o,\bar{o}}^0(u_{\bar{o}}) + f_{o,o}^1(u_{o})V \\
&= -i(u_{o})^2u_{\bar{o}} - iu_{o}V,
\end{equs}
where we recall that we set $\mathcal{B}_{o}^{1} = \mathcal{B}_{\bar{o}}^{1}= \id$.
We rewrite equation \eqref{nls_bis} into
\begin{equation} \label{nls3}
\begin{aligned}
\partial_t u_o - \mathcal{L}_o u_o  = - i (u_o)^2 u_{\bar o} - i u_o V, \quad u_o(0) = v_o \\
\partial_t u_{\bar o} - \mathcal{L}_{\bar o} u_{\bar o}  =  i ( u_{\bar o} )^2 u_o +i u_{\bar{o}} V, \quad u_{\bar o}(0) = v_{\bar o}
\end{aligned}
\end{equation}
where for simplicity we have assumed a real potential $ \overline{V} = V $.
The central iterated integrals up to second order then take the form
\begin{equs} 
\CI_{o}(\lambda_{1})  & = \begin{tikzpicture}[scale=0.2,baseline=-5]
\coordinate (root) at (0,-1);
\coordinate (tri) at (0,1);
\draw[kernels2] (root) -- (tri);
\node[not] (rootnode) at (root) {};
\node[xi,blue] (trinode) at (tri) {};
\end{tikzpicture} \equiv - i \int^{t}_0 e^{(t-s) \mathcal{L}} \left(  \left( e^{s \mathcal{L}} v \right) V \right) ds  \\ 
\CI_{\overline{o}}(\lambda_{1})  & = \begin{tikzpicture}[scale=0.2,baseline=-5]
\coordinate (root) at (0,-1);
\coordinate (tri) at (0,1);
\draw[kernels2,tinydots] (root) -- (tri);
\node[not] (rootnode) at (root) {};
\node[xi,blue] (trinode) at (tri) {};
\end{tikzpicture} \equiv   i \int^{t}_0 e^{(s-t) \mathcal{L}} \left(  \left( e^{-s \mathcal{L}} \overline{v} \right) V \right) ds  \\ &
\CI_{o}(\lambda_{0} \mathcal{I}_{o}(\lambda_1))   = \begin{tikzpicture}[scale=0.2,baseline=-5]
\coordinate (root) at (0,-1);
\coordinate (tri) at (0,1);
\coordinate (tri1) at (0,3);
\draw[kernels2] (root) -- (tri);
\draw[kernels2] (tri) -- (tri1);
\node[not] (rootnode) at (root) {};
\node[xi] (trinode1) at (tri) {};
\node[xi,blue] (trinode) at (tri1) {};
\end{tikzpicture} \\ &  \equiv  - \int_0^t e^{(t-s)\mathcal{L}} \left(  \left( \int^{s}_0 e^{(s-r) \mathcal{L}} \left( \left( e^{r \mathcal{L}} v \right) V \right) dr\right) \left( e^{  s \CL} v \right) \left( e^{ - s \CL} \bar{v} \right) \right) ds.
\\ &
\CI_{o}(\lambda_{0} \mathcal{I}_{\bar{o}}(\lambda_1))  = \begin{tikzpicture}[scale=0.2,baseline=-5]
\coordinate (root) at (0,-1);
\coordinate (tri) at (0,1);
\coordinate (tri1) at (0,3);
\draw[kernels2] (root) -- (tri);
\draw[kernels2,tinydots] (tri) -- (tri1);
\node[not] (rootnode) at (root) {};
\node[xi] (trinode1) at (tri) {};
\node[xi,blue] (trinode) at (tri1) {};
\end{tikzpicture} 
\\ & 
\equiv   \int_0^t e^{(t-s)\mathcal{L}} \left(  \left( \int^{s}_0 e^{(r-s) \mathcal{L}} \left( \left( e^{-r \mathcal{L}} \overline{v} \right) V \right) dr\right) \left( e^{  s \CL} v^2 \right)  \right) ds,
\end{equs}
where we have used a blue dot for encoding the potential $ V $. A full list of these trees is given in the Section~\ref{Gross_Pitaevskii}.
\end{example}

\subsection{Approximated decorated trees}

 We denote by $ \CT $ the set of decorated trees $ T_{\Labe,r}^{\Labn, \mathfrak{f}} = (T,\Labn,\mathfrak{f},\Labe,r)  $ where
\begin{itemize}
\item $ T_{\Labe}^{\Labn, \mathfrak{f}} \in \hat \CT $.
\item The decoration of the root is given by $ r \in \Z $, $ r \geq -1 $ such that
\begin{equs} \label{condition_trees}
 r +1 \geq  \deg(T_{\Labe}^{\Labn, \mathfrak{f}})
\end{equs}
 where $ \deg $ is defined recursively by 
 \begin{equs}
\deg(\one) & = 0, \quad \deg( \lambda^{\ell}_{\mathfrak{l}} \prod_{i=1}^m\mathcal{I}_{o_i}(T_i) )  = \ell + 1 + \max(\deg(T_1),...,\deg(T_m)).
\end{equs}
\end{itemize}
We call $ \CT $ approximated decorated trees following the terminology introduced in \cite{BS}. Indeed, the decoration $ r $ at the root means that we consider an approximation at order $ r $ of the iterated integrals associated to the same trees without this decoration. The quantity $\deg(T_{\Labe}^{\Labn})$ is the maximum number of edges and node decorations $ \Labn $ lying on the same path from one leaf to the root.

We denote by $ \CP $ the planted trees satisfying the same condition as $ \CT $. The forests formed of these trees are given by $ H $ and their linear span by $ \CH $. They are of the form $ \CI_{o}^r(T) $.
  The map $    \CI^{r}_{o}( \cdot) :  \hat \CT \rightarrow    \CH   $ is defined as the same as for $ \CI_{o}(\cdot) $ except now that the root is decorated by $ r $ and it could be zero if the inequality \eqref{condition_trees} is not satisfied.
  It can be naturally extended into a map from $ \CT $ into $ \CH $ by removing the $ r $ decoration at the root of the tree that is grafted onto a new root.
We define the map $ \CD_{r} : \hat \CP \rightarrow \CH $ which adds the decoration $ r $ and performs the projection along the identity \eqref{condition_trees}. It is given by
\begin{equs}\label{DR}
\CD_{r}(\one)= \one_{\lbrace 0 \leq  r+1\rbrace} , \quad \CD_r\left( \CI_{o}(T) \right) =  \CI^{r}_{o}( T)
\end{equs}
We extend this map to $ \CT $ by:
\begin{equs}
\CD_{r}(\lambda^{\ell}_{\mathfrak{l}} \prod_{i=1}^m\mathcal{I}_{o_i}(T_i)) = \lambda^{\ell}_{\mathfrak{l}} \prod_{i=1}^m\mathcal{I}^{r-\ell}_{o_i}(T_i)
\end{equs}
where the term on the right hand-side is identified with $ \lambda^{\ell}_{\mathfrak{l}} \prod_{i=1}^m\mathcal{I}_{o_i}(T_i) $ where the decoration $ r $ is added to its root with the projection given by \eqref{condition_trees}.

\section{Approximations of iterated integrals}

In this section, we introduce the iterated integrals associated to decorated trees via a character $ \Pi : \hat \CH \rightarrow \CC $, a multiplicative map for the forest product. We approximate this map via a new character $ \Pi_A : \CH \rightarrow \CC $ defined on approximated decorated forests with $ A $ a domain to  which $ \hat{\mathbf{v}} $ belongs to. The crucial part of the recursive definition of $ \Pi_A $ is given by an operator $ \mathcal{K} $ that filter out the dominant part of the operators. It expands only the lower order parts  into a Taylor series expansion while integrating exactly the dominant part (see Definition~\ref{def_CK}). The definition of these operators is similar to the one in Fourier given in \cite{BS}. Then, in the second part of this section, we conduct the local error analysis by comparing $ \Pi $ with $ \Pi_A $ with the main result given in Theorem~\ref{approxima_tree}.
The main argument for establishing this bound is a recursive definition (see Definition~\ref{def:Llow}) which involves several Taylor remainders (see Lemma~\ref{Taylor_bound}). These remainders are stemming from the various approximations performed by the map $ \mathcal{K} $. Such a description is analogue to the Fourier case that can provide a further expansion of the local error analysis via a Birkhoff factorisation involving a Butcher-Connes-Kreimer coproduct. We do not have such a characterisation for our scheme. However in Proposition~\ref{decomp_Pi} we are able  to show how dominant operators are involved in the local error analysis. 
This was previously observed in the Fourier case.

\label{sec::Iterated integrals}

\subsection{Characters on decorated forests}

We first introduce the linear space $ \mathcal{C} $ in which the iterated integrals associated to Duhamel's  formula \eqref{duhamel_main} live. Any element $ f \in \CC $ is such that for every $ \tau \in \R_+ $, one has that $ f(\cdot,\tau) $ is an operator on $ \hat{\mathbf{v}} $. We 
define a character  $ \Pi $  on $ \hat \CH $ taking values in $ \CC $. It is given for $ T \in \hat \CT $ and $ F_1, F_2 \in \hat{\CH} $ by
\begin{equs} \label{recursive_Pi}
 \left( \Pi \CI_{o}(T) \right) (\hat{\mathbf{v}},\tau)  & = \int_{0}^{\tau} e^{(\tau-\xi)\mathcal{L}_{o}} ( \tilde \Pi_{o} T)(\hat{\mathbf{v}},\xi)   d \xi, \\ (\Pi F_1 F_2)(\hat{\mathbf{v}},\xi) & = (\Pi F_1 )(\hat{\mathbf{v}},\xi) \, (\Pi F_2 )(\hat{\mathbf{v}},\xi)
\end{equs}
where $ F_1 F_2 $ is the forest product and $ \tilde{\Pi}_o $ is given on a decorated tree
 $ T = \lambda^k_{\mathfrak{l}} \prod_{i=1}^n \CI_{o_i}(T_{i})   $ by
\begin{equs}
\label{recursive_Pi_1}
\begin{aligned}
\left(  \tilde \Pi_o \, \lambda^k_{\mathfrak{l}} \prod_{i=1}^n \CI_{o_i}(T_i) \right)(\hat{\mathbf{v}},\xi) &  = \CB_{o}^{\mathfrak{l}} \left( \frac{\Upsilon_{\text{\scriptsize{root}}}^{\Psi_{o}}[T]}{S_{\text{\scriptsize{root}}}(T)} V_{\mathfrak{l}} \, \xi^{k} \Pi\left( \prod_{i=1}^n \ \CI_{o_i}(T_i) \right) \right) (\hat{\mathbf{v}},\xi) 
\end{aligned}
\end{equs}
where $ S_{\text{\scriptsize{root}}}(T) $ is the symmetry factor associated to the root of the decorated tree $ T $. The symmetry factor is defined by setting $S_{\text{\scriptsize{root}}}(\one) = 1$ and
\begin{equs}\label{S_root}
 S_{\text{\scriptsize{root}}} \left( \lambda^k_{\mathfrak{l}} \prod_{i,j} (\CI_{o_i}(T_{i,j}))^{\beta_{i,j}} \right) = k!\prod_{i,j}
\beta_{i,j}!
\end{equs}
with $T_{i,j} \neq T_{i,\ell}$ for $j \neq \ell$. The coefficient $ \Upsilon_{\text{\scriptsize{root}}}^{\Psi_o}[T] $ in \eqref{recursive_Pi} is given by
\begin{equs}\label{Upsilon_root}
\Upsilon_{\text{\scriptsize{root}}}^{\Psi_o}[T](\mathbf{v},\xi) 
= (\partial_k D_{o_1} \cdots D_{o_n} \hat \Psi_{o}^{\mathfrak{l}})(\mathbf{v},\xi)
\end{equs}
where for $\Psi^{\mathfrak{l}}_o(\mathbf{u}^{\mathfrak{l}}_{o}) = \CB^{\mathfrak{l}}_{o} \left( \prod_{\mathcal{o} \in \Lab_+^{\mathfrak{l},o}} f^{\mathfrak{l}}_{o,\mathcal{o}}(u_{\mathcal{o}})\right)$ we have
\begin{equs}
\hat{\Psi}^{\mathfrak{l}}_o(\mathbf{v},\xi)= \prod_{\mathcal{o} \in \Lab_+^{\mathfrak{l},o}} e^{\xi \mathcal{L}_{\mathcal{o}}} f^{\mathfrak{l}}_{o,\mathcal{o}}(v_{\mathcal{o}}).
\end{equs}
The various derivatives $ \partial, D_{o} $ follow the Leibniz rule and are given by:\begin{equs}
D_{o_i} \mathcal{C}^{\ell} \left[ D_{o_i}^k f^{\mathfrak{l}}_{o,o_i},\mathcal{L}_{o_i} \right] & =   \mathcal{C}^{\ell} \left[ D_{o_i}^{k+1} f^{\mathfrak{l}}_{o,o_i},\mathcal{L}_{o_i} \right],\\ 
\partial \mathcal{C}^{\ell} \left[  D_{o_i}^k f^{\mathfrak{l}}_{o,o_i},\mathcal{L}_{o_i} \right] & =  \mathcal{C}^{\ell+1} \left[  D_{o_i}^k f^{\mathfrak{l}}_{o,o_i},\mathcal{L}_{o_i}  \right],
\end{equs}
where $D_{o_i}$ stands for the derivative with respect to $v_{o_{i}}$. One has $D_{o_{i}} f(v_{o_{j}}) = 0$ for $o_i \neq o_j$ and by convention we set $D_{o_{i}} (e^{\xi \mathcal{L}_{{o_{i}}}} v_{o_{i}}) = 1$. The derivative $ \partial_o $ adds a commutator depending on $ \mathcal{L}_{o_i} $ to $ f_{o,o_i}^{\mathfrak{l}} $.

\begin{remark} The subscript $ o $ in $ \tilde{\Pi}_o $ is needed in the definition for identifying which nonlinearity has to be used in the coefficient $ \Upsilon_{\text{\scriptsize{root}}}^{\Psi_{o}}[T] $. This is also a crucial difference with \cite{BS} where the coefficients $ \Upsilon_{\text{\scriptsize{root}}}^{\Psi_{o}}[T]  $ are detached from the definition of the iterated integral. This is possible due to the fact that one only works in Fourier space in \cite{BS}. We have used the subscript $ \text{\scriptsize{root}} $ to stress that    $\Upsilon_{\text{\scriptsize{root}}}^{\Psi_{o}}[T]  $ computes a coefficient  of $ T $ taking into account only its edges connected to its root. This is in contrast with the classical elementary differential for B-series where all the nodes of the tree are considered for computing it.
In fact, the rest of the coefficient is computed recursively via an iteration of $ \Pi $ in \eqref{recursive_Pi_1}. 

\end{remark}

\begin{example}\label{ex_Gross_cont}
We continue  Example~\ref{ex_Gross} on the Gross-Pitaevskii equation and illustrate the definition of $ \Pi, \tilde{\Pi}_o $. One has 
\begin{equs}
\left( \tilde{\Pi}_{o}  \lambda_0 \right) (\hat{\mathbf{v}},\xi) & =   \left(- ie^{\xi \mathcal{L}} v^2 \right) \left( e^{- \xi \mathcal{L}} \bar{v} \right), \quad \left( \tilde{\Pi}_{\bar{o}}  \lambda_0 \right) (\hat{\mathbf{v}},\xi) =   \left(ie^{-\xi \mathcal{L}} \bar{v}^2 \right) \left( e^{ \xi \mathcal{L}} v \right), \\
\left( \tilde{\Pi}_{o}  \lambda_1 \right) (\hat{\mathbf{v}},\xi) & =   \left(- ie^{\xi \mathcal{L}} v \right) V, \quad \left( \tilde{\Pi}_{\bar{o}}  \lambda_1 \right) (\hat{\mathbf{v}},\xi) =   \left( ie^{-\xi \mathcal{L}} \overline{v} \right) V,
\end{equs}
since
\begin{equs}
S_{\text{\scriptsize{root}}}(\lambda_0) & =  S_{\text{\scriptsize{root}}}(\lambda_1) = 1, \ V_0 =1, \ V_1 = V, \ \mathcal{B}_{o}^{0} = \mathcal{B}_{o}^{1} = \id, \\
\Upsilon_{\text{\scriptsize{root}}}^{\Psi_{o}}[\lambda_0](\mathbf{v},\xi) & =  \left(-ie^{\xi \mathcal{L}} v^2 \right) \left( e^{- \xi \mathcal{L}} \bar{v} \right), \quad
\Upsilon_{\text{\scriptsize{root}}}^{\Psi_{\bar{o}}}[\lambda_0](\mathbf{v},\xi)  =  \left( ie^{-\xi \mathcal{L}} \bar{v}^2 \right) \left( e^{ \xi \mathcal{L}} v \right), \\
\Upsilon_{\text{\scriptsize{root}}}^{\Psi_{o}}[\lambda_1](\mathbf{v},\xi)  & = - ie^{\xi \mathcal{L}} v, \quad \Upsilon_{\text{\scriptsize{root}}}^{\Psi_{\overline{o}}}[\lambda_1](\mathbf{v},\xi)  =  ie^{-\xi \mathcal{L}} \overline{v}.
\end{equs}
Hence,
\begin{equs}
\left( \Pi  \mathcal{I}_o(\lambda_0 )\right) (\hat{\mathbf{v}},\tau) & = \int_{0}^{\tau} e^{(\tau-\xi)\mathcal{L}_{o}} ( \tilde \Pi_{o} \lambda_0)(\hat{\mathbf{v}},\xi)   d \xi \\ & = -i\int_{0}^{\tau} e^{(\tau-\xi)\mathcal{L}_{o}}   \left( e^{\xi \mathcal{L}} v^2 \right) \left( e^{- \xi \mathcal{L}} \bar{v} \right) d\xi.
\end{equs}
Then again, using the definition of the characters $ \Pi$, and $ \tilde{\Pi}_o $ in \eqref{recursive_Pi} , we obtain
\begin{equs}
\left( \tilde{\Pi}_{o}  \lambda^{1}_0 \right) (\hat{\mathbf{v}},\xi) & = \frac{\Upsilon_{\text{\scriptsize{root}}}^{\Psi_{o}}[\lambda_0^1]}{S_{\text{\scriptsize{root}}}(\lambda_0^1)}(\mathbf{v},\xi)V_0 \xi \\
& = - i\xi \left( e^{ \xi \mathcal{L}}\mathcal{C}[  u^2, \mathcal{L}](v) \right)\left( e^{- \xi \mathcal{L}} \bar{v} \right),
\end{equs}
since,
\begin{equs}
\Upsilon_{\text{\scriptsize{root}}}^{\Psi_{o}}[\lambda_0^1](\mathbf{v},\xi) & = (\partial_1 \hat{\Psi}_{o}^0)(\mathbf{v},\xi) \\
& =\partial_1[(e^{\xi\CL} f^{0}_{o,o}(v_{o}))(e^{-\xi\CL} f^{0}_{o,\bar{o}}(v_{\bar{o}}))] \\
& = - i\left( e^{ \xi \mathcal{L}}\mathcal{C}[  u^2, \mathcal{L}](v) \right)\left( e^{- \xi \mathcal{L}} \bar{v} \right)
\end{equs}
and $S_{\text{\scriptsize{root}}}(\lambda_0^1) = 1$.


\end{example}

Let  $ A $ be a given domain of regularity of $ \hat{\mathbf{v}} $ that corresponds to the regularity we assume a priori on the initial data and the potentials.
We introduce an approximation of $ \Pi $ via a new character
 $  \Pi_{A} $ defined on $ \CH $, by:
\begin{equs}
\label{Pi_A}
\begin{aligned}
\left( \Pi_A \CI^r_{o}(T) \right) (\hat{\mathbf{v}},\xi)  & = \mathcal{K}_{o,A}^r ( ( \tilde{\Pi}_{o,A} \CD_{r-1}(T))(\hat{\mathbf{v}},\cdot))(\xi)   
\\
\left(  \tilde{\Pi}_{o,A} \, \lambda^k_{\mathfrak{l}} \prod_{i=1}^n \CI_{o_i}^{r}(T_i) \right)(\hat{\mathbf{v}},\xi) &  = \CB_{o}^{\mathfrak{l}}\left( \frac{\Upsilon_{\text{\scriptsize{root}}}^{\Psi_{o}}[T]}{S_{\text{\scriptsize{root}}}(T)} V_{\mathfrak{l}} \, \xi^{k} \prod_{i=1}^n \left( \Pi_{A} \CI^r_{o_i}(T_i) \right) \right) (\hat{\mathbf{v}},\xi)
\end{aligned} 
\end{equs}
where $ T = \lambda^k_{\mathfrak{l}} \prod_{i=1}^n \CI_{o_i}(T_i) $ and the operator $ \mathcal{K}_{o,A}^r $ is given in Defintion~\ref{def_CK}. As one can notice the approximation follows the recursive definition of $ \Pi $ and $ \tilde{\Pi}_o $. The main difference occurs when one replaces the integral in time given in \eqref{recursive_Pi} by the operator $ \mathcal{K}_{o,A}^r $. This operator is used to give an approximation of order $r$  to  the following integral:
\begin{equs} \label{integral_1}
\int_{0}^{t} e^{(t-s) \mathcal{L}_{o}} \mathcal{B}\left( s^{\ell} \prod_{i=1}^m e^{s \mathcal{L}_i} u_i \right) ds
\end{equs}
where $ \mathcal{B} $, $ \mathcal{L}_i $ are some operators and the $ u_i $ depend on $ \hat{\mathbf{v}} $.
The approximation should take into account the various interactions between the operators $ \mathcal{L}_{o} $ and $ \mathcal{L}_i $. In the sequel, we will use the following shorthand notations:
\begin{equs}
\Pi^r_A = \Pi_A \mathcal{D}_r, \quad \tilde{\Pi}_{o,A}^r = \tilde{\Pi}_{o,A} \mathcal{D}_r
\end{equs}
where now $ \Pi^r_A $ and $ \tilde{\Pi}_{o,A} $ are defined on $ \hat \CH $ and
$ \hat{\mathcal{P}} $ .


\begin{definition} \label{def_CK}
Let us consider a function $F $ of the form 
\begin{equs} 
F(\hat{\mathbf{v}},\xi) = \xi^{\ell} \CB\left( \prod_{i \in J} e^{\xi \mathcal{L}_i} u_i(\hat{\mathbf{v}}) \right)
\end{equs}
 where $\CB$ is a linear operator, the operators $ \mathcal{L}_i $  satisfy Assumption \eqref{assump:L}, and $ J $ is a finite set. We suppose that the $ u_i $ are smooth functions of $\hat{\mathbf{v}}$, and we assume a given domain $A$   for which we have $\hat{\mathbf{v}} \in A$. Let $ r > 0 $, $ o \in \Lab_{+} $ and the operator $\mathcal{L}_{o} $  associated to the decoration $ o $. We assume that $ \CB $ commutes with $\mathcal{L}_o$. Given $(k_i)_{i \in J} $ with $ k_i \in \N $, we define 
\begin{equs}
G_{n,(k_i)_{i\in J}} : \hat{\mathbf{v}} \rightarrow \mathcal{L}_o^n \CB \left( \prod_{i \in J} \mathcal{L}_i^{k_i} u_i(\hat{\mathbf{v}}) \right).
\end{equs}
Note that the domain of $G$ depends on the space $X$ and it's associated norm.
Now we distinguish two cases: 
\begin{itemize}
\item If $A \subseteq D(G_{n,(k_i)_{i\in J}}) $ for all $(k_i)_{i\in J}$ and $n$ be such that, $\sum_i k_i + n \leq r-\ell+1$ we can carry out a Taylor-series expansion of all the operators and set:
 \begin{equs}\label{def_fullTalor}
 &\CK_{o,A}^r(F(\hat{\mathbf{v}},\cdot))(\tau) \\&=  \sum_{\sum_i k_i + n \leq r-\ell }  \int_0^\tau \frac{(\tau-\xi)^n \xi^{\sum_{i \in J} k_i + \ell}}{  n !\prod_{i \in J} k_i !} \mathcal{L}_o^n \CB \left( \prod_{i \in J} \mathcal{L}_i^{k_i} u_i(\hat{\mathbf{v}}) \right) d\xi  .
 \end{equs} 
\item Otherwise, we set
\begin{equs}
\CA_i = \mathcal{L}_i - \mathcal{L}_{o}, \quad
\mathcal{L}_{\tiny{\text{dom}}} = \mathcal{P}_{\tiny{\text{dom}}}( \lbrace\mathcal{A}_1,\cdots, \mathcal{A}_m\rbrace ).
\end{equs}
Then, if $ \mathcal{L}_{\tiny{\text{dom}}} \neq 0 $, let $ I \subset J $  such that for every $ i \in I$, one has
\begin{equs}
\CA_{i} = \mathcal{L}_{\tiny{\text{dom}}} + \CA_{\tiny{\text{low}}}^i, \quad D(\mathcal{L}_{\tiny{\text{dom}}})\subset D(\CA_{\tiny{\text{low}}}^i),
\end{equs}
and for  $i \in J \setminus I$ 
\begin{equs}
\CA_i = \CA_{\tiny{\text{low}}}^i, \quad
 D( \mathcal{L}_{\tiny{\text{dom}}}) \subset D(\CA_{i}) .
\end{equs}
One can rewrite $ F  $  as
\begin{equs}\label{F}
F(\cdot,\xi) = \xi^{\ell} \CB\left( \left(  \prod_{i \in I} e^{\xi (\mathcal{L}_o + \mathcal{L}_{\tiny{\text{dom}}} + \mathcal{A}_{\tiny{\text{low}}}^i ) } u_i \right) \left(  \prod_{i \in J \setminus I} e^{\xi (\mathcal{L}_o + \mathcal{A}_{\tiny{\text{low}}}^i  ) } u_i \right) \right).
\end{equs}
We assume that the operators that appear in \eqref{F} commute and generate a continuous semigroup. If this is not the case, one needs to apply the approximation \eqref{def_fullTalor}.
We have to distinguish  two cases:
\begin{itemize}
\item[(i)]  If $ \mathcal{L}_{\tiny{\text{dom}}} \neq 0 $ then
\begin{equs}\label{K_reson}
& \CK_{o,A}^r(F)(\tau)  =    \int_{0}^{\tau} \sum_{q  \leq r -\ell}\sum_{q =   n + m + p+  \sum_{i \in J} k_i  }  \frac{(\tau-\xi)^{m} \xi^{\ell + \sum_{i \in J} k_i} }{p! m! n! \prod_{i \in J} k_i !}  \\ 
& \CB \left( \mathcal{C}^{m}[ \left( e^{\tau \mathcal{L}_o} \mathcal{C}^{n}[\mathcal{M}_{J \setminus I}, \tau \mathcal{L}_o] \right) \left( e^{\xi \mathcal{L}_{\text{\tiny{dom}}} + \tau \mathcal{L}_o} \mathcal{C}^{p}[ \mathcal{M}_{ I}   ,\xi \mathcal{L}_{\text{\tiny{dom}}} + \tau \mathcal{L}_o ] \right) , \mathcal{L}_{o}] \right. \\ 
 & \left.  (((\CA^{i}_{\text{\tiny{low}}})^{k_i}  u_i )_{i \in J}) \, \right) d \xi .
\end{equs}
\item[(ii)] If $ \mathcal{L}_{\tiny{\text{dom}}} = 0 $ then 
\begin{equs}\label{K_no_reson}
& \CK_{o,A}^r(F)(\tau)  =    \int_{0}^{\tau} \sum_{q  \leq r -\ell}\sum_{q =   n + m + \sum_{i \in J} k_i  }  \frac{(\tau-\xi)^{m} \xi^{\sum_{i \in J} k_i}  }{ m! n! \prod_{i \in J} k_i !}  \\
 & \CB \left( \mathcal{C}^{m}[ \left( e^{\tau \mathcal{L}_o} \mathcal{C}^{n}[\mathcal{M}_{J }, \tau \mathcal{L}_o] \right) , \mathcal{L}_{o}](((\CA_i)^{k_i}  u_i )_{i \in J}) \right) \,  d \xi . 
\end{equs}
\end{itemize}
\end{itemize}
\end{definition}


\begin{remark} \label{general_form} In general, one has to face more complicated products to approximate. They are of the form:
\begin{equs}
F(\hat{\mathbf{v}},\xi) = \xi^{\ell} \CB \left(\prod_{i \in K} \CB_i\left( \prod_{j \in J_i} e^{\xi \mathcal{L}_j} u_{i,j}(\hat{\mathbf{v}}) \right) \right)
\end{equs}
Such a product will not appear in the Gross-Pitaevski equation neither for the first order of 
Sine-Gordon. In that case, one can perform the same analysis computing domimant and lower parts. To be in the cases $ (i) $ and $ (ii) $, there must exist some $ I \subset K $ such that one has to get the following factorisation:
\begin{equs}
F(\hat{\mathbf{v}},\xi) = \xi^{\ell} \CB \left( \prod_{i \in I} \CB_i\left(  \prod_{j \in J_i} e^{\xi (\mathcal{L}_o + \mathcal{L}_{\tiny{\text{dom}}} + \mathcal{A}_{\tiny{\text{low}}}^j ) } u_{i,j} \right) \right. \\ \left( \prod_{i \in K \setminus I}\CB_i\left(  \prod_{j \in J_i } e^{\xi (\mathcal{L}_o + \mathcal{A}_{\tiny{\text{low}}}^j  ) } u_{i,j}  \right) \right)
\end{equs}
 Then, one has just to replace the products $ \mathcal{M} $ by new ones taking into account the operators $ \CB_i $:
\begin{equs}
\prod_{i \in I} \CB_i(\CM_{J_i}), \quad \prod_{ i \in K\setminus I} \CB_i(\CM_{J_i})
\end{equs} 
instead of $ \CM_I $ and $ \CM_{K \setminus I} $.
  If $ F $  happens not to have this form one performs the full Taylor expansion as in \eqref{def_fullTalor}.
\end{remark}

\begin{remark} The 
 $\{ k_i\}_{i\in J}$ appear after having Taylor expanded the lower order parts of the operators. Let us mention that this step is necessary in a coupled system with two differential operators  $ \mathcal{L}_1 $ and $ \mathcal{L}_2 $ that differ from a lower differential operator see equation~\eqref{equation_A_i}.
\end{remark}

\begin{remark}\label{rmq:domainA}  The previous approximation is optimized according to an a priori given domain $A$. 
For solutions and potentials regular enough, for example when the following inclusions hold: $ A \subset D(\mathcal{L_o})$ or/and $  A \subset D(\mathcal{L}_{\text{dom}}) $, then in these cases, one can perform Taylor expansions (see \eqref{def_fullTalor}) in order to simplify the scheme. This approach is deeply used in Section \ref{sec::Gross_2} for the construction of the second order low-regularity scheme for Gross-Pitaevskii, and in Sections \ref{sec::Gross_1} and \ref{sec::Sine} to construct a simplified first-order scheme for the Gross-Pitaevskii and Sine-Gordon equation when we assume more regularity on the solution and/or potential.  Moreover, given that the boundary conditions are encoded in the domain of the operator $\CL_{o}$ and $\CL_{\text{dom}}$, we also define the domain $A$ with the prescribed boundary conditions. 
\end{remark}

\begin{example} \label{ex_5} We illustrate Definition~\ref{def_CK} on a tree coming from the NLS equation with $ A = (H^2(\Omega))^2 $, $\hat{\mathbf{v}} = (v, \bar{v}) $, where for simplicity we assume that $\Omega = \R^d$.
\begin{equs}
\left( \Pi_A \CI^r_{o}(\lambda_{0}) \right) (\hat{\mathbf{v}},\xi)  & = \mathcal{K}_{o,A}^r ( (\tilde{\Pi}_{o,A} \CD_{r-1}(\lambda_0))(\hat{\mathbf{v}},\cdot))(\xi)  \\
\left( \tilde{\Pi}_{o,A} \CD_{r-1}(\lambda_0) \right)(\hat{\mathbf{v}},\xi)  &  =   \left(-i e^{\xi \mathcal{L}} v^2 \right) \left( e^{- \xi \mathcal{L}} \bar{v} \right).
\end{equs}
Then using the notations in Definition~\ref{def_CK}, one gets
\begin{equs} 
F(\hat{\mathbf{v}},\xi) =  \prod_{i \in \lbrace 1,2 \rbrace} e^{\xi \mathcal{L}_i} u_i(\hat{\mathbf{v}}),\quad u_1(\hat{\mathbf{v}}) = -iv^2, \quad u_2(\hat{\mathbf{v}}) = \bar{v}, \\ \mathcal{L}_1 = \mathcal{L}, \quad \mathcal{L}_2 = - \mathcal{L}.
\end{equs} We can now compute the operator interactions as follows
\begin{equs}
\mathcal{A}_1 & = \mathcal{L}_1 - \mathcal{L} = 0, \quad \mathcal{A}_2 = \mathcal{L}_2 - \mathcal{L} = - 2 \mathcal{L} \\
\mathcal{L}_{\tiny{\text{dom}}} & = \mathcal{P}_{\tiny{\text{dom}}}( \lbrace\mathcal{A}_1, \mathcal{A}_2\rbrace )  =\mathcal{P}_{\tiny{\text{dom}}}( \lbrace 0, - 2 \mathcal{L} \rbrace ) = - 2 \mathcal{L}, \quad \CA_{\tiny{\text{low}}}^1 =  \CA_{\tiny{\text{low}}}^2 = 0.
 \end{equs}
We have $\mathcal{L}_{\tiny{\text{dom}}} = - 2 \mathcal{L} \neq 0$, therefore we apply the first point $ (i) $ of Definition~\ref{def_CK}:
 \begin{equs}
 & \CK_{o,A}^r(F)(\tau)  = \int_{0}^{\tau} \sum_{q  \leq r -\ell}\sum_{q =   n + m + p  }  \frac{(\tau-\xi)^{m}  }{p! m! n! \prod_{i \in J} k_i !}  \\ &  \mathcal{C}^{m}[ \left( e^{\tau \mathcal{L}_o} \mathcal{C}^{n}[\mathcal{M}_{J \setminus I}, \tau \mathcal{L}_o] \right) \left( e^{\xi \mathcal{L}_{\text{\tiny{dom}}} + \tau \mathcal{L}_o} \mathcal{C}^{p}[ \mathcal{M}_{ I}   ,\xi \mathcal{L}_{\text{\tiny{dom}}} + \tau \mathcal{L}_o ] \right) , \mathcal{L}_{o}]\\ &((  u_i )_{i \in J}) \,  d \xi. 
 \end{equs}
 We have $ I = \lbrace 2 \rbrace $ and $ J = \lbrace 1,2\rbrace $, therefore for every $ n,p \neq 0 $
 \begin{equs}
 \mathcal{C}^{n}[\mathcal{M}_{J \setminus I}, \tau \mathcal{L}_o] = \mathcal{C}^{p}[ \mathcal{M}_{ I}   ,\xi \mathcal{L}_{\text{\tiny{dom}}} + \tau \mathcal{L}_o ] = 0.
 \end{equs}
 
Hence, from the above computations and by considering the case where $ r=1 $ (i.e., a second order scheme) we obtain the following,
 \begin{equs} 
 & \CK_{o,A}^1(F)(\tau)  =  B + C \\ & B =   \int_{0}^{\tau} (\tau-\xi) \mathcal{C}[ \left( e^{\tau \mathcal{L}_o} \mathcal{M}_{\lbrace 1 \rbrace} \right) \left( e^{\xi \mathcal{L}_{\text{\tiny{dom}}} + \tau \mathcal{L}_o}  \mathcal{M}_{ \lbrace 2 \rbrace} \right), \mathcal{L}_{o}](u_1,u_2) \,  d \xi 
 \\  & C =  \int_{0}^{\tau} \left( e^{\tau \mathcal{L}_o} u_1\right) \left( e^{\xi \mathcal{L}_{\text{\tiny{dom}}} + \tau \mathcal{L}_o}  u_2 \right) \,  d \xi.
 \end{equs}
 In the end, one has
 \begin{equs}
\left( \Pi_A \CI^1_{o}(\lambda_{0}) \right) (\hat{\mathbf{v}},\tau) & = - i  \int_{0}^{\tau} \left( e^{\tau \mathcal{L}} v^2 \right) \left( e^{ (\tau- 2 \xi) \mathcal{L} }  \overline{v} \right) \,  d \xi \\ & +  \int_{0}^{\tau} (\tau-\xi) \mathcal{C}[ \left( e^{\tau \mathcal{L}} \mathcal{M}_{\lbrace 1 \rbrace} \right) \left( e^{(\tau-2\xi) \mathcal{L}}  \mathcal{M}_{ \lbrace 2 \rbrace} \right), \mathcal{L}](-i v^2,\overline{v}) \,  d \xi. 
\end{equs}
\end{example}

\subsection{Local error analysis for approximated iterated integrals}

Before comparing the character $ \Pi $ with $ \Pi_{A} $, one has to understand the error introduced  by the operator $ \CK $. It actually depends on the various cases in Definition~\ref{def_CK}.

\begin{lemma}\label{Taylor_bound} We keep the notations of Definition~\ref{def_CK}. We suppose that $ r \geq \ell $ then one has
\begin{equation}
 \int_{0}^{\tau} e^{(\tau- \xi) \mathcal{L}_o} F(\hat{\mathbf{v}},\xi) d\xi -\CK^{r}_{o,A} (  F(\hat{\mathbf{v}}),\cdot)(\tau) = \CO(\tau^{r+2} R^r_{o,A}(F)(\hat{\mathbf{v}}))
\end{equation}
where $ R^r_{o,A}(F) $ takes the following values:
\begin{itemize}
\item[(i)] If we are using the approximation \eqref{def_fullTalor}, we get the following error:
\begin{equs}
R_{o,A}^r(F) =\sum_{\sum_i k_i + n =  r-\ell + 1 }  \mathcal{L}_o^n \CB \left( \prod_{i \in J} \mathcal{L}_i^{k_i} u_i \right) .
\end{equs}
\item[(ii)]  If $ \mathcal{L}_{\tiny{\text{dom}}} \neq 0 $ then
\begin{equs}
R_{o,A}^r(F)  & =   \sum_{ \sum_{i \in J} k_i + m + n + p = r - \ell +1  }  \CB \left( \mathcal{C}^{m}[ \left(  \mathcal{C}^{n}[\mathcal{M}_{J \setminus I},  \mathcal{L}_o] \right) \right. \\ & \left. \left(  \mathcal{C}^{p}[ \mathcal{M}_{ I}   , \mathcal{L}_{\text{\tiny{dom}}} +  \mathcal{L}_o ] \right) , \mathcal{L}_{o}](((\CA^{i}_{\text{\tiny{low}}})^{k_i}  u_i )_{i \in J}) \right) .
\end{equs}
\item[(iii)] If $ \mathcal{L}_{\tiny{\text{dom}}} = 0 $ then 
\begin{equs}
 R_{o,A}^r(F)  & =   \sum_{ \sum_{i \in J} k_i + m + n + p = r - \ell +1  } \CB \left( \mathcal{C}^{m}[  \mathcal{C}^{n}[\mathcal{M}_{J }, \mathcal{L}_o], \mathcal{L}_{o}](((\CA_i)^{k_i}  u_i )_{i \in J}) \right). 
\end{equs}
\end{itemize}
\end{lemma}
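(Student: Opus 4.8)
The proof is a Taylor-expansion-with-remainder computation, carried out in each of the three cases of Definition~\ref{def_CK}. The key point is that $\CK^r_{o,A}$ is, by construction, exactly the truncation of an exact identity for $\int_0^\tau e^{(\tau-\xi)\CL_o} F(\hat{\mathbf v},\xi)\,d\xi$, so the error is the tail of that expansion, and the combinatorial bookkeeping of which monomials $(\tau-\xi)^m\xi^{\sum k_i+\ell}$ survive is governed by the single constraint $\sum_i k_i + m + n + p = r-\ell+1$ (respectively $\sum_i k_i+n=r-\ell+1$ in case (i), and with $p$ dropped in case (iii)). I would organise the argument around the elementary one-variable fact: for a smooth $\R_+$-valued function $g$ and an operator semigroup, $e^{(\tau-\xi)\CL}g(\xi)=\sum_{m\le N}\frac{(\tau-\xi)^m}{m!}\CL^m g(\xi)+\text{(remainder of order }\tau^{N+1})$, and its companion for $e^{\xi\CL}$, together with the commutator identity \eqref{commutator} in the form $e^{\xi\CL}\CB(\prod e^{\xi\CL_i}u_i)=\sum_{\ell\le N}\frac{\xi^\ell}{\ell!}e^{\xi\CL}\CC^\ell[\cdots,\CL](\cdots)+\CO(\xi^{N+1}\CC^{N+1}[\cdots])$, which is precisely the device that converts powers of the singular operator into nested commutators.

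\textbf{Case (i) (full Taylor).} Here $A\subseteq D(G_{n,(k_i)})$ for all relevant indices, so one simply Taylor-expands $e^{(\tau-\xi)\CL_o}$ to order $n$ and each $e^{\xi\CL_i}$ to order $k_i$ inside the integral, collecting all terms with $\sum_i k_i+n\le r-\ell$ into $\CK^r_{o,A}(F)$ as in \eqref{def_fullTalor}. The integrand, being a finite sum of monomials $\xi^{\sum k_i+\ell}(\tau-\xi)^n$ times the bounded-ish operator string $G_{n,(k_i)}$, leaves a remainder which after integration in $\xi\in[0,\tau]$ contributes a factor $\tau^{r-\ell+2-\ell}$... more precisely the leading tail has degree $\sum_i k_i+n+\ell=r+1$ in the two time variables combined, hence integrates to $\CO(\tau^{r+2})$ with the stated operator coefficient $R^r_{o,A}(F)=\sum_{\sum k_i+n=r-\ell+1}\CL_o^n\CB(\prod\CL_i^{k_i}u_i)$. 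The only care needed is that the remainder form of Taylor's theorem for semigroups requires the remainder argument to stay in the relevant domain, which is exactly the hypothesis $A\subseteq D(G_{n,(k_i)})$.

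\textbf{Cases (ii)/(iii) (dominant part split).} Now one first rewrites $F$ as in \eqref{F}, separating, for $i\in I$, the dominant operator $\CL_{\text{dom}}$ (to be integrated exactly, since $e^{\xi\CL_{\text{dom}}}$ makes sense and carries the central oscillation) from $\CA^i_{\text{low}}$ (to be Taylor expanded, giving the $k_i$), and for $i\in J\setminus I$ keeping only $\CA^i_{\text{low}}$. Pull $e^{(\tau-\xi)\CL_o}$ through the product: distributing it across a product of semigroups is not free, and this is where the commutator $\CC[\cdot,\CL_o]$ enters — the expansion of $e^{(\tau-\xi)\CL_o}\prod(\cdots)$ produces the nested commutators $\CC^m[\cdots,\CL_o]$, and a second application of the same mechanism for the two sub-products $\CM_{J\setminus I}$ and $\CM_I$ (the latter now also carrying $\xi\CL_{\text{dom}}$) produces $\CC^n[\CM_{J\setminus I},\CL_o]$ and $\CC^p[\CM_I,\CL_{\text{dom}}+\CL_o]$. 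After Taylor expanding each $e^{\xi\CA^i_{\text{low}}}$ to order $k_i$ and integrating the resulting monomials $\xi^{\ell+\sum k_i}(\tau-\xi)^m$ against the exactly-kept oscillation $e^{\xi\CL_{\text{dom}}+\tau\CL_o}$ in $\xi$, the terms with total degree $q=n+m+p+\sum k_i\le r-\ell$ assemble into \eqref{K_reson}, and the first omitted layer — total degree $r-\ell+1$ — gives, after the $\xi$-integral, the $\CO(\tau^{r+2})$ remainder with coefficient $R^r_{o,A}(F)$ as stated; case (iii) is the specialisation $\CL_{\text{dom}}=0$, $I=\emptyset$, $p=0$.

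\textbf{Main obstacle.} The genuinely delicate part is making the "distribute $e^{(\tau-\xi)\CL_o}$ across a product and Taylor expand, keeping exact track of remainders" step rigorous at the level of \emph{operators with domains}, rather than formally: one must check that every intermediate operator string appearing in the commutator expansion (e.g. $\CC^m[(\CC^n[\CM_{J\setminus I},\CL_o])(\CC^p[\CM_I,\CL_{\text{dom}}+\CL_o]),\CL_o]$ applied to $((\CA^i_{\text{low}})^{k_i}u_i)$) is well-defined on $A$ for indices up to $r-\ell$, and that the genuinely-remainder terms of total degree $r-\ell+1$ are the \emph{only} ones requiring the borderline regularity — this is the whole point of the scheme and is exactly what the domain hypotheses in Definition~\ref{def_CK} (commutation of $\CB$ with $\CL_o$; the operators in \eqref{F} commute and generate continuous semigroups; the nestings $D(\CL_{\text{dom}})\subset D(\CA^i_{\text{low}})$ etc.) are designed to guarantee. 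Once the bookkeeping constraint $\sum_i k_i+m+n+p=r-\ell+1$ is identified as characterising the leading tail, the rest is an induction-free but careful accounting of binomial coefficients from the two Taylor expansions, which I would not spell out in full.
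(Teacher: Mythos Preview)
Your proposal is correct and follows exactly the approach the paper takes: the paper's proof is a single sentence stating that it ``reduces to applying several time Taylor expansions and using identity \eqref{commutator},'' and your outline is simply a careful elaboration of that sentence, identifying in each case which factors are Taylor-expanded, how the commutator identity \eqref{commutator} enters to distribute $e^{(\tau-\xi)\CL_o}$ across products, and why the total-degree bookkeeping $\sum_i k_i + m + n + p = r-\ell+1$ (or its degenerations in cases (i) and (iii)) singles out the leading remainder. Your observation about the operator-domain subtleties is apt but goes beyond what the paper itself addresses.
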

\begin{proof} The proof reduces to applying several time Taylor expansions and using    identity \eqref{commutator}.
\end{proof}

\begin{example} \label{ex_6} We want to compute the error term for the tree $ \mathcal{I}_o(\lambda_0) $ illustrating Lemma~\ref{Taylor_bound}. From Example~\ref{ex_5}, we are in the case where $ D(\mathcal{L}_{\tiny{\text{dom}}}) = D(\mathcal{L}_o) $ such that
\begin{equs}
R^1_{o,A}(\tilde{\Pi}_{o,A}\lambda_0)(\hat{\mathbf{v}}) & = \sum_{  m + n + p = 2  } \mathcal{C}^{m}[ \left(  \mathcal{C}^{n}[\mathcal{M}_{J \setminus I},  \mathcal{L}_o] \right) \\ & \left(  \mathcal{C}^{p}[ \mathcal{M}_{ I}   , \mathcal{L}_{\text{\tiny{dom}}} +  \mathcal{L}_o ] \right) , \mathcal{L}_{o}] (-i v^2,\bar{v} )
\end{equs}
and we got only one term which is non-zero $(m,n,p)= (2,0,0) $:
\begin{equs}
R^1_{o,A}(\tilde{\Pi}_{o,A}\lambda_0)(\hat{\mathbf{v}}) & = \mathcal{C}^2[ \mathcal{M}_{\lbrace  1,2 \rbrace}, \mathcal{L}_{o}] (-i v^2,\bar{v} ) .
\end{equs}
\end{example}

The next proposition follows the steps of \cite[Prop. 3.9]{BS}. It singles out the dominant parts of the oscillations and shows that approximated iterated integrals are connected with Definition~\ref{dom_freq}. This decomposition was also connected to the Birkoff factorisaton given in \cite{BS}. It provides information about the dominant operators that will be involved in the local error analysis.  In the next proposition, we assume that $ \CB_o^{\mathfrak{l}} = \id $. In the general case, one can write a similar statement using Remark~\ref{general_form}.

\begin{proposition} \label{decomp_Pi}
For every tree $ \mathcal{I}^r_{o}( T) \in \CH $, one has the following decomposition:
\begin{equs} \label{simple_formula} \begin{aligned}
  \left(\Pi_A \mathcal{I}^r_{o}( T) \right)(\hat{\mathbf{v}},\xi)    & =  \sum_{\bar{T}  \subset \mathcal{I}_{o}(T)}      \prod_{\mathcal{L} \in \CR_{\text{\tiny{dom}}}( \bar T)} \left(  e^{i \xi \mathcal{L} } B^r \left(T, \bar T, \mathcal{L} \right)(\hat{\mathbf{v}},\xi) \right) \\ + & 
  \left( e^{i \xi \mathcal{L}_o} B^r \left(T, \one, \mathcal{L}_o \right)(\hat{\mathbf{v}},\xi) \right) \left( e^{i \xi \mathcal{L}_o} C^r \left(T, \one, \mathcal{L}_o \right)(\hat{\mathbf{v}},\xi) \right)
 \end{aligned}
\end{equs}
where the sum is over all subtrees $ \bar T \neq \one$ that have at least one edge and that contain the root of $  \CI_{o}(T) $. The $ B^r \left(T, \bar T, \mathcal{L} \right)(\hat{\mathbf{v}},\xi) $, $ B^r \left(T, \one, \mathcal{L}_o \right)(\hat{\mathbf{v}},\xi)$ and $ C^r \left(T, \one, \mathcal{L}_o \right)(\hat{\mathbf{v}},\xi) $ are polynomials in $\xi$ and the  $\CR_{\text{\tiny{dom}}}(\bar T)  $ are given in Definition~\ref{dom_freq}. 
\end{proposition}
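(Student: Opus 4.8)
The plan is to prove the decomposition \eqref{simple_formula} by induction on the number of edges of the decorated tree $\mathcal{I}_o(T)$, unfolding the recursive definition \eqref{Pi_A} of $\Pi_A$ one edge at a time and tracking how the operator $\mathcal{K}^r_{o,A}$ from Definition~\ref{def_CK} reshapes the exponential factors. The base case is a single edge, i.e. $T = \lambda^k_{\mathfrak{l}}$: here $\tilde\Pi_{o,A}\CD_{r-1}(\lambda^k_{\mathfrak{l}})$ is of the form $\xi^{k} \CB^{\mathfrak{l}}_o(\prod_{i} e^{\xi \mathcal{L}_i} u_i)$ with the $\mathcal{L}_i$ ranging over $\mathfrak{L}_+^{o,\mathfrak{l}}$, and applying $\mathcal{K}^r_{o,A}$ in either case (i) or (ii) of Definition~\ref{def_CK} produces an integrand which, after pulling the outer $e^{\tau\mathcal{L}_o}$ out, is a sum of terms each carrying a single dominant exponential $e^{\xi\mathcal{L}_{\mathrm{dom}}}$ (case (i)) or none (case (ii)), multiplied by polynomials in $\xi$ coming from the Taylor-expanded lower parts and the $(\tau-\xi)^m$, $(\tau-\xi)^n$ factors. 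One then checks directly that $\mathcal{L}_{\mathrm{dom}} + \mathcal{L}_o = \mathcal{L}_{\text{\tiny dom}}(\CI_o(T))$ and that $\CR_{\text{\tiny dom}}(\CI_o(\lambda^k_{\mathfrak{l}}))$ from Definition~\ref{dom_freq} consists precisely of this one operator (plus the $e^{i\xi\mathcal{L}_o}$-terms which land in the $B^r(T,\one,\mathcal{L}_o)$ and $C^r(T,\one,\mathcal{L}_o)$ pieces when $\mathcal{L}_{\mathrm{dom}}=0$). This is essentially a bookkeeping match with Example~\ref{NLS_1}.

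For the inductive step, write $T = \lambda^k_{\mathfrak{l}} \prod_{i=1}^n \CI_{o_i}(T_i)$. By \eqref{Pi_A},
\begin{equs}
\left(\tilde\Pi_{o,A}\CD_{r-1}(T)\right)(\hat{\mathbf v},\xi) = \CB^{\mathfrak{l}}_o\!\left( \frac{\Upsilon^{\Psi_o}_{\text{\scriptsize root}}[T]}{S_{\text{\scriptsize root}}(T)} V_{\mathfrak{l}}\, \xi^k \prod_{i=1}^n \left(\Pi_A \CI^{r-1-k}_{o_i}(T_i)\right)(\hat{\mathbf v},\xi)\right).
\end{equs}
Each factor $\Pi_A \CI^{r-1-k}_{o_i}(T_i)$ is, by the induction hypothesis, a sum over subtrees $\bar T_i \subseteq \CI_{o_i}(T_i)$ of products $\prod_{\mathcal{L}\in\CR_{\text{\tiny dom}}(\bar T_i)} e^{i\xi\mathcal{L}} B^{\cdot}(\cdots)$, plus the analogous $e^{i\xi\mathcal{L}_{o_i}}$-terms. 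Multiplying these $n$ sums together and then feeding the resulting sum of products of exponentials (times polynomials in $\xi$, absorbed into the $B$'s) into $\mathcal{K}^r_{o,A}$ via \eqref{Pi_A}: for each product term the collection $\{\mathcal{L}_i\}$ entering Definition~\ref{def_CK} is exactly the union (with the conjugation/shift by $\mathcal{L}_o$) of the $\CR_{\text{\tiny dom}}(\bar T_i)$ together with the operators $\mathcal{L}_{\bar o}$, $\bar o\in\mathfrak{L}_+^{o,\mathfrak{l}}$ coming from $\Upsilon$ — which is precisely the set $-\mathcal{L}_o \oplus \CR^o_{\text{\tiny dom}}(T)$ appearing in Definition~\ref{dom_freq}. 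Applying $\mathcal{P}_{\text{\tiny dom}}$ to this set gives $\mathcal{L}_{\text{\tiny dom}}$, and $\mathcal{K}^r_{o,A}$ then produces terms with the single extra dominant exponential $e^{\xi\mathcal{L}_{\text{\tiny dom}} + \tau\mathcal{L}_o}$ (case (i)) together with the lower exponentials $e^{\xi\mathcal{A}^i_{\text{\tiny low}}}$ hidden inside the commutator structure and Taylor-expanded — which, after the change of variables pulling out $e^{\tau\mathcal{L}_o}$, is exactly the recursion $\CR_{\text{\tiny dom}}(\CI_o(T)) = \mathcal{L}_o \oplus \max(\mathcal{L}_{\text{\tiny dom}}, -\mathcal{L}_o\oplus\CR^o_{\text{\tiny dom}}(T))$, with subtrees $\bar T = \CI_o(\bar T')$ where $\bar T'$ ranges over products of the $\bar T_i$'s. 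The terms where $\mathcal{P}_{\text{\tiny dom}}$ returns $0$ on some subcollection, or where one replaces $e^{i\xi\mathcal{L}_{o_i}}$ instead of a dominant operator, collapse into the $e^{i\xi\mathcal{L}_o}B^r(T,\one,\mathcal{L}_o)\cdot e^{i\xi\mathcal{L}_o}C^r(T,\one,\mathcal{L}_o)$ summand, with $C^r$ recording the "lower-order" contributions and $B^r$ the rest; polynomiality in $\xi$ of all the $B^r, C^r$ is preserved because every operation ($\mathcal{K}$'s integration against $\xi^{\sum k_i + \ell}$ and $(\tau-\xi)^{m+n}$, multiplication of polynomials, the Taylor expansions) maps polynomials to polynomials.

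The main obstacle I expect is the combinatorial matching between, on one hand, the subtrees $\bar T \subseteq \CI_o(T)$ containing the root and, on the other, the way $\mathcal{K}^r_{o,A}$ distributes the index set $J$ into the "dominant" part $I$ and its complement and then takes nested commutators $\mathcal{C}^m[\cdots(\mathcal{C}^n[\mathcal{M}_{J\setminus I},\cdot])(\mathcal{C}^p[\mathcal{M}_I,\cdot]),\mathcal{L}_o]$: one has to argue that choosing $I$ corresponds exactly to choosing which of the sub-subtrees $\bar T_i$ "carry the dominant oscillation upward", so that the recursive $\max$ and $\ominus$ operations of Definition~\ref{dom_freq} reproduce the full list $\CR_{\text{\tiny dom}}(\bar T)$ and no spurious operators survive. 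The cleanest way to handle this is probably to prove a slightly stronger statement by induction — tracking not just the final form but also exactly which polynomial-in-$\xi$ prefactor $B^r(T,\bar T,\mathcal{L})$ accompanies each dominant exponential — so that the multiplicativity in the product $\prod_{i=1}^n$ and the behaviour of $\mathcal{K}$ compose transparently; this mirrors the role of \cite[Prop. 3.9]{BS} in the Fourier setting, with the Birkhoff factorisation there replaced here by the explicit recursion. The semigroup/commutation hypotheses imposed in Definition~\ref{def_CK} (and Remark~\ref{general_form}) are exactly what is needed to justify rearranging the exponentials $e^{\xi\mathcal{L}_i}$ into the factorised form \eqref{F} at each step, so no extra care is required beyond invoking them. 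Everything else — the Taylor expansions, the change of variables, the verification that degrees stay bounded by $r$ via the constraint \eqref{condition_trees} — is routine.
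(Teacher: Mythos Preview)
Your proposal is correct and follows essentially the same approach as the paper's proof: induction on the size of $T$, unfolding the recursion \eqref{Pi_A}, applying the induction hypothesis to each $\CI_{o_i}(T_i)$, and then tracking how $\CK^r_{o,A}$ acts on the resulting product to produce one term indexed by the subtree $\CI_o(\bar T)$ and one term landing in the $\one$ summand. The paper's version is terser---it skips the explicit base case and does not spell out the combinatorial match between the $I$/$J\setminus I$ split in Definition~\ref{def_CK} and the recursion in Definition~\ref{dom_freq}---but your added detail on precisely that point (which you flag as the ``main obstacle'') is exactly what the paper compresses into the sentence ``As a consequence of Definition~\ref{def_CK}, the map $\CK_{o,A}^{r}$ applied to the term coming from a subforest $\bar T$ integrates exactly in the end a term of the form \ldots''.
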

\begin{proof} We proceed by induction on the size of $ T $ and we suppose that $ T = \lambda^{\ell}_{\mathfrak{l}}\prod_i \mathcal{I}_{o_i}(T_i) $ where one can apply the induction hypothesis on each  $\mathcal{I}_{o_i}(T_i)$. One has from \eqref{Pi_A}
\begin{equs}
\left(\Pi_A \CI^r_{o}(T)  \right)(\hat{\mathbf{v}},\xi) = \CK_{o,A}^{r}\left(  \tilde{\Pi}_{o,A} \left( \CD_{(r-1)}( T) \right)(\hat{\mathbf{v}},\cdot) \right)(\xi)
\end{equs}
and
\begin{equs}
\tilde{\Pi}_{o,A} \left(  \CD_{(r-1)}( T) \right)(\hat{\mathbf{v}},\xi) = \frac{\Upsilon_{\text{\scriptsize{root}}}^{\Psi_{o}}[T]}{S_{\text{\scriptsize{root}}}(T)}(\mathbf{v},\xi) V_{\mathfrak{l}} \, \xi^{\ell} \prod_{i} \left(\Pi_A \CI^{(r-1-\ell)}_{o_i}(  T_i) \right) (\hat{\mathbf{v}},\xi).
\end{equs}
By applying the induction hypothesis on the $ \mathcal{I}_{o_i}(T_i)$, one has the following decomposition
\begin{equs}
\tilde{\Pi}_{o,A} & \left(  \CD_{(r-1)}( T) \right)(\hat{\mathbf{v}},\xi)  =  \frac{\Upsilon_{\text{\scriptsize{root}}}^{\Psi_{o}}[T]}{S_{\text{\scriptsize{root}}}(T)}(\mathbf{v},\xi) V_{\mathfrak{l}} \prod_i \sum_{\bar{T}_i  \subset \mathcal{I}_{o_i}(T_i)}       \\ &  \prod_{\mathcal{L} \in \CR_{\text{\tiny{dom}}}( \bar T_i)} \left(  e^{i \xi \mathcal{L} } B^{r-1-\ell} \left(T_i, \bar T_i, \mathcal{L} \right)(\hat{\mathbf{v}},\xi) \right) \\ & +  \left( e^{i \xi \mathcal{L}_{o_i}} B^r \left(T_i, \one, \mathcal{L}_{o_i} \right)(\hat{\mathbf{v}},\xi) \right) \left( e^{i \xi \mathcal{L}_{o_i}} C^r \left(T_i, \one, \mathcal{L}_{o_i} \right)(\hat{\mathbf{v}},\xi) \right)
\end{equs}
where the $ B^r \left(T_i, \bar T_i, \mathcal{L} \right)(\hat{\mathbf{v}},\xi) $, $   B^r \left(T_i, \one, \mathcal{L}_{o_i} \right)(\hat{\mathbf{v}},\xi) $ and $ C^r \left(T_i, \one, \mathcal{L}_{o_i} \right)(\hat{\mathbf{v}},\xi)  $ are polynomials in $ \xi $. As a consequence of Definition~\ref{def_CK}, the map  $  \CK_{o,A}^{r} $ applied   to the term coming from a subforest  $ \bar T = \lambda^{\ell}_{\mathfrak{l}} \prod_i \bar T_i $, integrates exactly in the end a term of the form
\begin{equs}
 \int_0^{\xi}
\left( e^{\xi \CL_{o}}P(\hat{\mathbf{v}},s)  \right) \left(       e^{s \CL_{\text{\tiny{dom}}}(\CI_o(\bar T)) + \xi \mathcal{L}_o} Q(\hat{\mathbf{v}},s) \right)  
 d s 
\end{equs}
where $ P(\hat{\mathbf{v}},s) $ and $ Q(\hat{\mathbf{v}},s) $ are polynomials in $ s $. 
Then, by computing this integral, we obtain two terms of the form
\begin{equs}
\left( e^{s \mathcal{L}_o}P_1(\hat{\mathbf{v}},s) \right) \left(        e^{i s \mathcal{L}_{\text{\tiny{dom}}}(\CI_o(\bar T))} Q_1(\hat{\mathbf{v}},s) \right), \quad \left( e^{s \mathcal{L}_o}P_2(\hat{\mathbf{v}},s) \right) \left(  e^{s \mathcal{L}_o}Q_2(\hat{\mathbf{v}},s) \right)
\end{equs}
where $ P_1, Q_1  $ and $ P_2 $ are also polynomials in $ s $. The first term corresponds to the subtree $ \CI_o(\bar T)$ and the second term corresponds to $\one $. Therefore, the structure of \eqref{simple_formula} is preserved.
\end{proof}
The next definition computes the local error using the recursive construction of the decorated trees. It follows the same structure as in \cite{BS}, the main difference being that now $ \Upsilon_{\text{\scriptsize{root}}}^{\Psi_o}$ has to be added to this definition.
\begin{definition}\label{def:Llow}
Let $ r \in \Z $ and a domain $ A $ of $ \hat{\mathbf{v}} $. We recursively define $ \mathcal{L}^{r}_{\text{\tiny{low}}}(\cdot,\hat{\mathbf{v}},A)$ and $ \mathcal{L}^{r,o}_{\text{\tiny{low}}}(\cdot,\hat{\mathbf{v}},A)$, $ o \in \Lab_+ $  as
\begin{equs}
\mathcal{L}^{r}_{\text{\tiny{low}}}(T,\hat{\mathbf{v}},A) = 1, \quad r < 0, \quad \mathcal{L}^{r}_{\text{\tiny{low}}}(\one,\hat{\mathbf{v}},A) = 1 .
\end{equs}
Else for $ T =  \lambda^{\ell}_{\mathfrak{l}}\prod_i \CI_{a_i}(T_i) $, one has
\begin{equs}\label{local_error_1}
\mathcal{L}^{r,o}_{\text{\tiny{low}}}&(\lambda^{\ell}_{\mathfrak{l}}\prod_i \CI_{a_i}(T_i),\hat{\mathbf{v}},A)   = \\ & \CB_o^{\mathfrak{l}} \left(  \frac{\Upsilon_{\text{\scriptsize{root}}}^{\Psi_o}[T]}{S_{\text{\scriptsize{root}}}(T)}(\mathbf{v},0) V_{\mathfrak{l}}   \left(  \sum_i 
  \mathcal{L}^{r-\ell}_{\text{\tiny{low}}}( \CI_{a_i}(T_i),\hat{\mathbf{v}},A) \right) \right).
\end{equs}
And 
\begin{equs} \label{local_error_2} \begin{aligned}
\mathcal{L}^{r}_{\text{\tiny{low}}}(\CI_{o}( T ),\hat{\mathbf{v}},A)  =    \mathcal{L}^{r-1,o}_{\text{\tiny{low}}}(  T, \hat{\mathbf{v}},A) +     R^r_{o,A}\left(\left(\tilde{\Pi}_{o,A}^{r-1} T \right)(\hat{\mathbf{v}},\cdot)  \right) .
\end{aligned}
\end{equs}
\end{definition}
\begin{example} We continue Example~\ref{ex_6}.
For the decorated tree $ T = \mathcal{I}_o(\lambda_0) $, one gets:
\begin{equs}
\mathcal{L}^{1}_{\text{\tiny{low}}}( T,\hat{\mathbf{v}},A) & =     \mathcal{L}^{0}_{\text{\tiny{low}}}(  \lambda_0, \hat{\mathbf{v}},A) +     R^1_{o,A}\left(\left(\tilde{\Pi}_{o,A}^0  T \right)(\hat{\mathbf{v}},\cdot)  \right) 
\\ & = - i v^2 \bar{v} +  \mathcal{C}^2[ \mathcal{M}_{\lbrace  1,2 \rbrace}, \mathcal{L}_{o}] ( -i v^2,\bar{v} ).
\end{equs}
One can notice that the first term asks less regularity in comparison to the second.
\end{example}
The next theorem shows that the characters $ \Pi_A $ and $ \tilde{\Pi}_{o,A} $ are  good approximations of 
$ \Pi $ and $ \tilde{\Pi}_o $
 with a local error given by Definition~\ref{def:Llow}. We follow the same steps as in the proof of \cite[Thm 3.17]{BS}. 
\begin{theorem} \label{approxima_tree}
For $ T =  \lambda^{\ell}_{\mathfrak{l}}\prod_i \CI_{a_i}(T_i) $ one has,
\begin{equs} \label{bound_1}
\left(\tilde{\Pi}_o T - \tilde{\Pi}_{o,A} \mathcal{D}_r(T) \right)(\hat{\mathbf{v}},\tau)  = \mathcal{O}\left( \tau^{r+2} \mathcal{L}^{r,o}_{\text{\tiny{low}}}(T,\hat{\mathbf{v}},A) \right)
\end{equs}
and for $ T =  \CI_{o}(\bar T )$, one gets
\begin{equs} \label{bound_2}
\left(\Pi T - \Pi_A \mathcal{D}_r(T) \right)(\hat{\mathbf{v}},\tau)  = \mathcal{O}\left( \tau^{r+2} \mathcal{L}^{r}_{\text{\tiny{low}}}(T,\hat{\mathbf{v}},A) \right).
\end{equs}
\end{theorem}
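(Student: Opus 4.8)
The plan is to prove \eqref{bound_1} and \eqref{bound_2} simultaneously, by induction on the size (number of edges) of the decorated tree, following the recursive definitions \eqref{recursive_Pi}--\eqref{recursive_Pi_1}, \eqref{Pi_A} of $\Pi,\tilde{\Pi}_o,\Pi_A,\tilde{\Pi}_{o,A}$ and the recursive Definition~\ref{def:Llow} of $\mathcal{L}^{r}_{\text{\tiny{low}}}$. Two degenerate situations are cleared first: if $r<0$, or if $\CD_r(T)=0$ because $r+1<\deg(T)$, then \eqref{bound_2} reduces to the a priori bound $(\Pi T)(\hat{\mathbf{v}},\tau)=\mathcal{O}(\tau^{\deg(T)})=\mathcal{O}(\tau^{r+2})$, which holds since each edge of $T$ produces a time integration over $[0,\tau]$ and each node decoration $\Labn$ a factor $\xi^{k}$ in the integrand. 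For the genuine base cases, a leafless tree $T=\lambda^{k}_{\mathfrak{l}}$ has $\tilde{\Pi}_o T=\tilde{\Pi}_{o,A}\CD_r(T)$ (no integral and no $\Pi_A$-factor enters), so \eqref{bound_1} is immediate; and for $T=\CI_o(\lambda^{k}_{\mathfrak{l}})$ the sole discrepancy comes from the operator $\CK^{r}_{o,A}$, so Lemma~\ref{Taylor_bound} gives \eqref{bound_2} with remainder $R^{r}_{o,A}\big((\tilde{\Pi}^{r-1}_{o,A}\lambda^{k}_{\mathfrak{l}})(\hat{\mathbf{v}},\cdot)\big)$, which is exactly $\mathcal{L}^{r}_{\text{\tiny{low}}}(\CI_o(\lambda^{k}_{\mathfrak{l}}),\hat{\mathbf{v}},A)$ by \eqref{local_error_2}.

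For the inductive step of \eqref{bound_1}, write $T=\lambda^{\ell}_{\mathfrak{l}}\prod_{i=1}^{n}\CI_{a_i}(T_i)$. By \eqref{recursive_Pi_1}, \eqref{Pi_A} and $\CD_r(T)=\lambda^{\ell}_{\mathfrak{l}}\prod_i\CI^{r-\ell}_{a_i}(T_i)$, the difference $(\tilde{\Pi}_o T-\tilde{\Pi}_{o,A}\CD_r T)(\hat{\mathbf{v}},\tau)$ equals $\CB^{\mathfrak{l}}_{o}$ applied to $\frac{\Upsilon^{\Psi_o}_{\text{\scriptsize{root}}}[T]}{S_{\text{\scriptsize{root}}}(T)}V_{\mathfrak{l}}\,\tau^{\ell}$ times $\prod_i\Pi\CI_{a_i}(T_i)-\prod_i\Pi_A\CI^{r-\ell}_{a_i}(T_i)$, all evaluated at $(\hat{\mathbf{v}},\tau)$. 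I would expand this difference of products by the telescoping identity $\prod_i p_i-\prod_i q_i=\sum_i(\prod_{j<i}p_j)(p_i-q_i)(\prod_{j>i}q_j)$: since $\CD_{r-\ell}\CI_{a_i}(T_i)=\CI^{r-\ell}_{a_i}(T_i)$, the induction hypothesis \eqref{bound_2} applied to the smaller tree $\CI_{a_i}(T_i)$ at order $r-\ell$ gives $p_i-q_i=\mathcal{O}(\tau^{r-\ell+2}\mathcal{L}^{r-\ell}_{\text{\tiny{low}}}(\CI_{a_i}(T_i),\hat{\mathbf{v}},A))$, while each remaining factor $p_j,q_j$ is continuous in $\tau$ and vanishes at $\tau=0$, hence is bounded near $\tau=0$. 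Multiplying by $\tau^{\ell}$, absorbing the extra powers of $\tau$, and replacing $\Upsilon^{\Psi_o}_{\text{\scriptsize{root}}}[T](\mathbf{v},\tau)$ by $\Upsilon^{\Psi_o}_{\text{\scriptsize{root}}}[T](\mathbf{v},0)$ (the correction being $\mathcal{O}(\tau)$) then yields precisely $\mathcal{O}(\tau^{r+2}\mathcal{L}^{r,o}_{\text{\tiny{low}}}(T,\hat{\mathbf{v}},A))$ with $\mathcal{L}^{r,o}_{\text{\tiny{low}}}$ as in \eqref{local_error_1}.

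For the inductive step of \eqref{bound_2}, write $T=\CI_o(\bar T)$ and split
\begin{equs}
\big(\Pi T-\Pi_A\CD_r T\big)(\hat{\mathbf{v}},\tau) & =\int_{0}^{\tau}e^{(\tau-\xi)\mathcal{L}_o}\big(\tilde{\Pi}_o\bar T-\tilde{\Pi}_{o,A}\CD_{r-1}\bar T\big)(\hat{\mathbf{v}},\xi)\,d\xi \\ & \quad+\int_{0}^{\tau}e^{(\tau-\xi)\mathcal{L}_o}\big(\tilde{\Pi}_{o,A}\CD_{r-1}\bar T\big)(\hat{\mathbf{v}},\xi)\,d\xi-\CK^{r}_{o,A}\big((\tilde{\Pi}_{o,A}\CD_{r-1}\bar T)(\hat{\mathbf{v}},\cdot)\big)(\tau).
\end{equs}
The first summand I would handle with the induction hypothesis \eqref{bound_1} for $\bar T$ at order $r-1$: its integrand is $\mathcal{O}(\xi^{r+1}\mathcal{L}^{r-1,o}_{\text{\tiny{low}}}(\bar T,\hat{\mathbf{v}},A))$, and since $\{e^{t\mathcal{L}_o}\}_{t\geq0}$ is a contraction semigroup by Assumption~\ref{assump:L}, integration gives $\mathcal{O}(\tau^{r+2}\mathcal{L}^{r-1,o}_{\text{\tiny{low}}}(\bar T,\hat{\mathbf{v}},A))$. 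The bracketed second summand is exactly the error controlled by Lemma~\ref{Taylor_bound}: by Proposition~\ref{decomp_Pi} together with \eqref{recursive_Pi_1}, \eqref{Pi_A}, the function $\tilde{\Pi}_{o,A}\CD_{r-1}\bar T$ is a finite sum of terms of the form $\xi^{\ell'}\CB(\prod_{j\in J}e^{\xi\mathcal{L}_j}u_j(\hat{\mathbf{v}}))$ treated in Definition~\ref{def_CK} (possibly carrying the nested-operator structure of Remark~\ref{general_form}), and the hypothesis $r\geq\ell'$ of Lemma~\ref{Taylor_bound} follows from \eqref{condition_trees} since $\deg(\CI_o(\bar T))\geq\ell'+1$; applying $\CK^{r}_{o,A}$ and Lemma~\ref{Taylor_bound} termwise bounds it by $\mathcal{O}(\tau^{r+2}R^{r}_{o,A}((\tilde{\Pi}_{o,A}\CD_{r-1}\bar T)(\hat{\mathbf{v}},\cdot)))$. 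Adding the two contributions and recalling $\tilde{\Pi}^{r-1}_{o,A}=\tilde{\Pi}_{o,A}\CD_{r-1}$, the total is $\mathcal{O}(\tau^{r+2}\mathcal{L}^{r}_{\text{\tiny{low}}}(\CI_o(\bar T),\hat{\mathbf{v}},A))$ by \eqref{local_error_2}, which closes the induction.

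The hard part is not the analysis---boundedness of the semigroups, the Taylor remainders, and the commutator identity \eqref{commutator} are routine---but the algebraic bookkeeping needed to keep the recursion closed. One must verify that the telescoped products of the already-approximated factors $\Pi_A\CI^{r-\ell}_{a_i}(T_i)$, which by Proposition~\ref{decomp_Pi} are themselves finite sums of exponential-times-polynomial terms, still belong---after multiplication by $\Upsilon^{\Psi_o}_{\text{\scriptsize{root}}}[T]\,V_{\mathfrak{l}}\,\tau^{\ell}$---to the admissible class on which $\CK^{r}_{o,A}$ and Lemma~\ref{Taylor_bound} act, and that after factoring out $\tau^{r+2}$ the surviving coefficient is precisely the recursively defined $\mathcal{L}^{r,o}_{\text{\tiny{low}}}$, i.e.\ that the commutator structure of the remainders $R^{r}_{o,A}$ is transmitted faithfully through the recursion rather than overcounted.
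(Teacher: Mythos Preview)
Your proof is correct and follows essentially the same approach as the paper's: a simultaneous induction on the recursive structure of $\Pi,\tilde{\Pi}_o,\Pi_A,\tilde{\Pi}_{o,A}$, using a product-difference decomposition for \eqref{bound_1} and the splitting into the $(\tilde{\Pi}_o-\tilde{\Pi}^{r-1}_{o,A})$-integral plus the $\CK^{r}_{o,A}$-error handled by Lemma~\ref{Taylor_bound} for \eqref{bound_2}. Your treatment is in fact somewhat more careful than the paper's: you explicitly handle the degenerate cases $r<0$ and $\CD_r(T)=0$, you use the clean telescoping identity $\prod_i p_i-\prod_i q_i=\sum_i(\prod_{j<i}p_j)(p_i-q_i)(\prod_{j>i}q_j)$ (the paper writes $\sum_i(p_i-q_i)\prod_{j\neq i}(p_j+q_j)$, which overcounts by a combinatorial factor but is harmless at the level of $\mathcal{O}$-bounds), and you explicitly invoke Proposition~\ref{decomp_Pi} to justify that $\tilde{\Pi}^{r-1}_{o,A}\bar T$ lies in the admissible class for Lemma~\ref{Taylor_bound}, a point the paper leaves implicit.
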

\begin{proof} We proceed by induction by using the recursive definition \eqref{Pi_A} of $ \Pi_A $ and $ \tilde{\Pi}_{o,A} $. First, one gets
\begin{equs}
\left(\Pi - \Pi_A \right)(\one)(\hat{\mathbf{v}},\tau)  = 0 = \mathcal{O}\left( \tau^{r+2} \mathcal{L}^{r}_{\text{\tiny{low}}}(\one,\hat{\mathbf{v}},A) \right).
\end{equs}
Then for $T =  \lambda^{\ell}_{\mathfrak{l}}\prod_i \CI_{a_i}(T_i)$ and every $ o \in \Lab_+ $, one has
\begin{equs} 
 \left(\tilde{\Pi}_o  - \tilde{\Pi}_{o,A}^r \right)\left(T \right)(\hat{\mathbf{v}},\tau) & = \tau^{\ell} \CB_o^{\mathfrak{l}}\left( \frac{\Upsilon_{\text{\scriptsize{root}}}^{\Psi_{o}}[T]}{S_{\text{\scriptsize{root}}}(T)}(\mathbf{v},\tau) V_{\mathfrak{l}} \sum_{i}  \left(\Pi - \Pi_A^{r-\ell}  \right)(\CI_{a_i}(T_i))(\hat{\mathbf{v}},\tau)  \right. \\ & \left. \prod_{j \neq i} \left( \Pi_A^{r-\ell} + \Pi \right) (\CI_{a_j}(T_j))(\hat{\mathbf{v}},\tau) \right)  .
 \end{equs}
  Then by applying the induction hypothesis \eqref{bound_2} to each $ \CI_{a_i}(T_i) $, one gets
  \begin{equs}
  \left(\tilde{\Pi}_o  - \tilde{\Pi}_{o,A}^r \right)\left( T \right)(\hat{\mathbf{v}},\tau) &   = \mathcal{O} \left( \tau^{r+2} \CB_{o}^{\mathfrak{l}}\left(  \frac{\Upsilon_{\text{\scriptsize{root}}}^{\Psi_{o}}[T]}{S_{\text{\scriptsize{root}}}(T)}(\mathbf{v},0) V_{\mathfrak{l}}  \sum_i 
\mathcal{L}^{r-\ell}_{\text{\tiny{low}}}( \CI_{a_i}(T_i),\hat{\mathbf{v}},A) \right) \right)
\\ & = \mathcal{O} \left( \tau^{r+2}\mathcal{L}^{r}_{\text{\tiny{low}}}(\lambda^{\ell}_{\mathfrak{l}}\prod_i \CI_{a_i}(T_i),\hat{\mathbf{v}},A) \right).
  \end{equs}
For $ T =  \CI_{o}(  \bar T ) $, one has
\begin{equs}
&\left( \Pi-\Pi^{r}_A \right) \left( T \right) (\hat{\mathbf{v}},\tau)  = \int_{0}^{\tau}  e^{ \xi \mathcal{L}_{o}} (\tilde{\Pi}_o- \tilde{\Pi}^{r- 1}_{o,A})( \bar T)(\hat{\mathbf{v}},\xi) d \xi 
\\ &  + \int_{0}^{\tau} e^{ \xi \mathcal{L}_{o}}  (\tilde{\Pi}_{o,A}^{r-1}  \bar T)(\hat{\mathbf{v}},\xi) d \xi -\CK^{r}_{o} (  (\tilde{\Pi}_{o,A}^{r-1} T )(\hat{\mathbf{v}},\cdot) )(\tau)
 \\ & = \int_{0}^{\tau} \CO \left( \xi^{r+1} \mathcal{L}^{r-1,o}_{\text{\tiny{low}}}(\bar T,\hat{\mathbf{v}},A) \right)  d\xi +  \CO(\tau^{r+2} R^r_{o,A}\left(\left(\tilde{\Pi}_{o,A}^{r-1} \bar T \right)(\hat{\mathbf{v}},\cdot)  \right) ) \\
& =   \CO \left( \tau^{r+2} \mathcal{L}^{r}_{\text{\tiny{low}}}(T,\hat{\mathbf{v}},A) \right) .
\end{equs}
where the term $  R^r_{o,A}\left(\left(\tilde{\Pi}^{r-1}_{o,A}  \bar T \right)(\hat{\mathbf{v}},\cdot) \right)  $  is obtained by applying Lemma~\ref{Taylor_bound}.
\end{proof}
\section{Low regularity numerical scheme}\label{sec:genScheme}

The writing of our low regularity numerical scheme follows two steps. The first one is to write a truncated decorated trees series that will solve the Duhamel's formula up to order $ r $ (see Proposition~\ref{exact_solution_order_r}). This series is formed of iterated integrals produced by the character $ \Pi $. Then, one replaces $ \Pi $ by $ \Pi_A $ to get the scheme. Such steps were already used in \cite{BS}. Let us mention that the first step is more involved here due to the fact that we work with a more general setting.

\subsection{Exact solution up to order $ r $}

Recall the mild solution of  \eqref{ev} given by Duhamel's formula
\begin{equation}\label{duhLin_it}
u_o(t) = e^{ t  \mathcal{L}_o} v_o  + \sum_{\mathfrak{l} \in \Lab_-} \int_0^t e^{ (t-\xi)  \mathcal{L}_o} \Psi_o^{\mathfrak{l}}(\mathbf{u}_o^{\mathfrak{l}}) V_{\mathfrak{l}} d\xi ,
\end{equation}
where the nonlinearity $ \Psi_o^{\mathfrak{l}}(\mathbf{u}_o^{\mathfrak{l}}) $ is given by
\begin{equs}
\Psi_{o}^{\mathfrak{l}}(\mathbf{u}^{\mathfrak{l}}_o) = \mathcal{B}^{\mathfrak{l}}_{o}\left(\prod_{\mathcal{o} \in \Lab_+^{\mathfrak{l},o}} f^{\mathfrak{l}}_{o,\mathcal{o}}(u_{\mathcal{o}})\right).
\end{equs}
 In the following we want to construct a scheme with a local error of order 
 $$\CO(\tau^{r+2}).$$
 Therefore, 
before describing our numerical scheme, we need to remove the trees which are already of size $\CO(\tau^{r+2})$. Indeed, a simple recursion shows that one has for every tree $ \mathcal{I}_o(T) $
\begin{equs} \label{bounds_integral}
(\Pi \CI_o(T) )( \hat{\mathbf{v}},\tau) = \CO( p(T,\hat{\mathbf{v}})\tau^{n_+(T)})
\end{equs} 
where $ p(T,\hat{\mathbf{v}}) $ is a polynomial in the variables $ \hat{\mathbf{v}} $ that does not contain any operators $( \mathcal{L}_o)_{o \in \Lab_+} $ and at most $ |N_T| $ operators $ \CB $.  The map $n_+ $ is defined on $ T^{\Labn, \mathfrak{f}}_{\Labe} $ as 
\begin{equs}
n_+( T^{\Labn, \mathfrak{f}}_{\Labe} ) = 
\sum_{v \in N_T} \Labn(v) + |E_T|
\end{equs}
which corresponds to the number of integrations in time and polynomial decorations. 
We define the space of decorated trees $  \CT_{o}^r \subset \CT_{o} $ as 
\begin{equs} 
\CT_{o}^r = \lbrace  \CI_o(T), \, T  \in \CT^r  \rbrace, \quad \CT^r = \lbrace  T \in  \CT, \, n_+(T)  \leq r, \, \left( \Pi \CI_o(T) \right) \neq 0 \rbrace .
\end{equs}
The condition $ \left( \Pi \CI_o(T) \right) \neq 0 $ guarantees that we consider only trees which have been generated by iterations of Duhamel's formula~\ref{duhLin_it}. 
In the following we consider
\begin{equs} \label{expansion_order_r}
w^r_o(\hat{\mathbf{v}},\tau) = e^{\tau \mathcal{L}_o} v_o + \sum_{T \in \CT^r}  \left( \Pi \CI_{o}(T) \right)(\hat{\mathbf{v}},\tau)
\end{equs}
which solves \eqref{duhLin_it} up to order $ r +1$ in the following sense:

\begin{proposition} \label{exact_solution_order_r} One has that
\begin{equs}
w^r_o(\hat{\mathbf{v}},t)  =
e^{ \tau  \mathcal{L}_o} v_o  + \sum_{\mathfrak{l} \in \Lab_-} \int_0^\tau e^{ (\tau-\xi)  \mathcal{L}_o} \Psi_o^{\mathfrak{l}}(\mathbf{w}^{\mathfrak{l},r}_{o})(\hat{\mathbf{v}},\xi) V_{\mathfrak{l}} d\xi + \mathcal{O}(\tau^{r+2})
\end{equs}
where $ \mathbf{w}^{\mathfrak{l},r}_{o} $ denotes the term $ (w_{\bar o}^{r})_{\bar{o} \in \Lab_+^{\mathfrak{l},o}} $ and the remainder $ \mathcal{O}(\tau^{r+2})$ involves  commutators under the form:
\begin{equs}
\mathcal{C}^{r}[f,\mathcal{L}_o]
\end{equs}
(however, not full powers of $\mathcal{L}_o^r$) with $ f $ a function coming from the coefficients $ \Psi_o $.
\end{proposition}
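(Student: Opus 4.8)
The plan is to iterate Duhamel's formula \eqref{duhLin_it} a fixed finite number of times, each time expanding the nonlinearity around the linear flow using the commutator identity \eqref{commutator} (equivalently \eqref{main_equation}), and to keep track of the terms produced. First I would set up the iteration scheme: starting from \eqref{duhLin_it}, we substitute $u_o(\xi) = e^{\xi\mathcal{L}_o}v_o + A_o(\xi)$ and expand $\Psi_o^{\mathfrak{l}}(\mathbf{u}_o^{\mathfrak{l}})$ by a Taylor expansion in $A$ truncated at order $r$ together with the linearisation \eqref{commutator} of each $f^{\mathfrak{l}}_{o,\mathcal{o}}(e^{\xi\mathcal{L}_{\mathcal{o}}}v_{\mathcal{o}})$. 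Each substitution produces a sum of iterated integrals, which are exactly the ones encoded by the decorated trees through the character $\Pi$ (this is essentially the definition of $\Pi$ in \eqref{recursive_Pi}--\eqref{recursive_Pi_1}, together with the bookkeeping of symmetry factors $S_{\text{\scriptsize{root}}}$ and coefficients $\Upsilon_{\text{\scriptsize{root}}}^{\Psi_o}$). The key observation here is that the bound \eqref{bounds_integral}, $(\Pi\CI_o(T))(\hat{\mathbf{v}},\tau) = \CO(p(T,\hat{\mathbf{v}})\tau^{n_+(T)})$, which follows from a straightforward recursion on the tree, guarantees that all trees with $n_+(T) \geq r+1$ contribute only to the remainder $\CO(\tau^{r+2})$ (since one additional integration in the outermost Duhamel integral $\int_0^\tau e^{(\tau-\xi)\mathcal{L}_o}\cdots d\xi$ raises the power by one, turning $n_+ = r+1$ into $\tau^{r+2}$); hence we may truncate the tree series to $T \in \CT^r$.

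Next I would carry out the induction on the number of iterations carefully. After $r+1$ iterations of Duhamel, every iterated integral that is ``cut off'' contributes a term in which the innermost nonlinearity has been hit by $r+1$ total ``units'' of either an extra Taylor order in $A$ or an extra commutator $\mathcal{C}[\cdot,\mathcal{L}_o]$. I would argue that, by the structure of the commutator expansion \eqref{commutator}, the leading error in each such term is of the form $\mathcal{C}^{r+1}[f,\mathcal{L}_o]$ applied to polynomials in $\hat{\mathbf{v}}$, with $f$ one of the functions $f^{\mathfrak{l}}_{o,\mathcal{o}}$ (or a derivative thereof, or a product arising from the $\mathcal{B}^{\mathfrak{l}}_o$-tensorisation) — crucially \emph{not} a bare power $\mathcal{L}_o^r$, precisely because we substituted the commutator identity \eqref{commutator} rather than a Taylor expansion of $e^{\xi\mathcal{L}_o}$. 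The remainder from the Taylor expansion of $A_o(\xi)$ in \eqref{main_equation} is a polynomial in $v$ with only bounded operators $\mathcal{B}^{\mathfrak{l}}_o$, and hence is strictly milder; so the dominant contribution to the $\CO(\tau^{r+2})$ remainder is the commutator term, as claimed. Finally, one checks that the resulting identity is exactly
\begin{equs}
w^r_o(\hat{\mathbf{v}},\tau) = e^{\tau\mathcal{L}_o}v_o + \sum_{\mathfrak{l}\in\Lab_-}\int_0^\tau e^{(\tau-\xi)\mathcal{L}_o}\Psi_o^{\mathfrak{l}}(\mathbf{w}^{\mathfrak{l},r}_o)(\hat{\mathbf{v}},\xi)V_{\mathfrak{l}}\,d\xi + \CO(\tau^{r+2}),
\end{equs}
i.e., that resubstituting the truncated series $w^r_{\bar o}$ into the right-hand side of Duhamel reproduces $w^r_o$ up to order $\tau^{r+2}$; this is the fixed-point consistency statement and follows because the set $\CT^r$ of trees is closed under the operation ``graft $\CI_o(\cdot)$ onto a product of subtrees of total size $\le r-\ell$'', which mirrors one iteration of Duhamel.

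The main obstacle I expect is the combinatorial bookkeeping: one must verify that when $\Psi_o^{\mathfrak{l}}$ is substituted and expanded, the sum of tree contributions produced — with their symmetry factors $S_{\text{\scriptsize{root}}}(T)$ and elementary-differential-type coefficients $\Upsilon_{\text{\scriptsize{root}}}^{\Psi_o}[T]$ — reassembles \emph{exactly} into $\Psi_o^{\mathfrak{l}}(\mathbf{w}^{\mathfrak{l},r}_o)$ (up to the order-$\tau^{r+2}$ error), with no over- or under-counting; this is the usual B-series-type identity adapted to the present non-planar decorated trees and the tensorised nonlinearity \eqref{nonlin}, and the multiplicativity of $\Pi$ for the forest product in \eqref{recursive_Pi} is what makes it work. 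A secondary, more analytic point is to make the assertion ``$\mathcal{C}^{r+1}[f,\mathcal{L}_o]$ is milder than $\mathcal{L}_o^r$'' precise at the level of domains — but as noted in Remarks~\ref{rem:domi} and following, one has the inclusion $D(\mathcal{C}[f,\mathcal{L}_o]) \supset D(\mathcal{L}_o)$ (and iterating, $D(\mathcal{C}^{r+1}[f,\mathcal{L}_o]) \supset D(\mathcal{L}_o^{r+1}) \supset D(\mathcal{L}_o^{r+1})$ with, generically, strict improvement), so this part is a direct consequence of the structure already established and requires no new idea.
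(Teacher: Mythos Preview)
Your proposal is essentially correct and uses the same ingredients as the paper: Taylor expansion of each $f^{\mathfrak{l}}_{o,\bar o}$ around the linear flow $e^{\xi\mathcal{L}_{\bar o}}v_{\bar o}$, the commutator identity \eqref{commutator} to pull out the propagator, and the combinatorial bookkeeping with $S_{\text{\scriptsize{root}}}$ and $\Upsilon_{\text{\scriptsize{root}}}^{\Psi_o}$ (the paper invokes a Fa\`a di Bruno / Leibniz identity for the $\partial$-derivative here to collapse the multi-index sum over the $\ell_{\bar o}$). The only organizational difference is that the paper works in the more direct direction: it defines $B$ as the right-hand side of Duhamel with $\mathbf{w}^{\mathfrak{l},r}_o$ already substituted and shows $B = w^r_o$ up to the error, whereas your opening paragraph frames things as iterating on the exact solution $u$ --- your ``fixed-point consistency'' check at the end is precisely the paper's argument, and the earlier discussion of iterating on $u$ is an unnecessary detour since $w^r_o$ is already given by definition.
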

\begin{proof}
Let $ B $ be given by
\begin{equs}
B & = e^{ \tau  \mathcal{L}_o} v_o  + \sum_{\mathfrak{l} \in \Lab_-} \int_0^\tau e^{ (\tau-\xi)  \mathcal{L}_o} \Psi_o^{\mathfrak{l}}(\mathbf{w}^{\mathfrak{l},r}_{o})(\hat{\mathbf{v}},\xi) V_{\mathfrak{l}} d\xi \\ & =  e^{ \tau  \mathcal{L}_o} v_o  + \sum_{\mathfrak{l} \in \Lab_-} \int_0^\tau e^{ (\tau-\xi)  \mathcal{L}_o} \mathcal{B}_{o}^{\mathfrak{l}}\left( \prod_{\bar{o} \in \Lab^{ \mathfrak{l},o}_+} f^{\mathfrak{l}}_{o, \bar{o}}(w_{\bar{o}}^r(\hat{\mathbf{v}},\xi)) \right) V_{\mathfrak{l}} d\xi .
\end{equs}
Then, we have
\begin{equs}
w^r_o(\hat{\mathbf{v}},\xi) = e^{ \xi \mathcal{L}_o} v_o + R_{r,o}(\hat{\mathbf{v}},\xi), \quad R_{r,o}(\hat{\mathbf{v}},\xi) =  \sum_{T \in \CT^r}  \left( \Pi \CI_{o}(T) \right)(\hat{\mathbf{v}},\xi).
\end{equs}
By performing Taylor expansions around the point $ e^{ \xi \mathcal{L}_o} v_o $, one gets:
\begin{equs}
B  = e^{ \tau  \mathcal{L}_o} v_o  + \sum_{\mathfrak{l} \in \Lab_-} \int_0^\tau & e^{ (\tau-\xi)  \mathcal{L}_o} \CB_o^{\mathfrak{l}} \sum_{\sum_{\bar{o}} k_{\bar{o}} \leq r} \frac{1}{\prod_{\bar{o}} k_{\bar{o}} !} \\ & \prod_{\bar{o} \in \Lab_+^{\mathfrak{l},o}}  R^{k_{\bar{o}}}_{r,\bar{o}}(\hat{\mathbf{v}},\xi) f_{o,\bar{o}}^{\mathfrak{l},(k_{\bar{o}})}(e^{ \xi \mathcal{L}_{\bar{o}}} v_{\bar{o}}) V_{\mathfrak{l}} d\xi  + \mathcal{O}(\tau^{r+2})
\end{equs}
where the sum $ \sum_{\bar{o}}  $ and the product $ \prod_{\bar{o}} $ run over $ \bar{o} \in \Lab_+^{\mathfrak{l},o} $. The error 
$\mathcal{O}(\tau^{r+2})$ depends on the derivatives of $ f_{o,\bar{o}}^{\mathfrak{l}} $ and on a polynomial in $ \hat{\mathbf{v}} $ using the bound \eqref{bounds_integral}. The next step of the approximation is to pull out the term $e^{ \xi \mathcal{L}_{\bar{o}}} v_{\bar{o}}  $:
\begin{equs}
f_{o,\bar{o}}^{\mathfrak{l},(k_{\bar{o}})}(e^{ \xi \mathcal{L}_{\bar{o}}} v_{\bar{o}})  & = e^{ \xi \mathcal{L}_{\bar{o}}} f(v_{\bar{o}}) + \sum_{\ell_{\bar{o}} = 1}^{r} \frac{\xi^{\ell_{\bar{o}}}}{\ell_{\bar{o}}!}  e^{ \xi \mathcal{L}_{\bar{o}}} \mathcal{C}^{\ell_{\bar{o}}}[f_{o,\bar{o}}^{\mathfrak{l},(k_{\bar{o}})},\mathcal{L}_{\bar{o}}](v_{\bar{o}})  \\ & + \mathcal{O}(\xi^{r+1} \mathcal{C}^{r+1}[f_{o,\bar{o}}^{\mathfrak{l},(k_{\bar{o}})},\mathcal{L}_{\bar{o}}](v_{\bar{o}})).
\end{equs}
The remainder coming from this approximation is the leading error. It involves $ \mathcal{L}_o $ with an iterated commutator. For the next computations, we will omit this error and write all the identities up to some terms which are neglectable in comparison to this error.
Inserting the previous expansion into $ A $ and neglecting the terms which are of order bigger than $  \mathcal{O}(\tau^{r+2}) $, we get:
\begin{equs}
B & =  e^{ \tau \mathcal{L}_o} v_o  + \sum_{\mathfrak{l} \in \Lab_-} \int_0^\tau e^{ (\tau-\xi)  \mathcal{L}_o}  \sum_{\sum_{\bar{o}} \ell_{\bar{o} }+ \sum_{\bar{o} ,i}  n_+(T_{\bar{o},i}) \leq r} \frac{\xi^{\sum_{\bar{o}} \ell_{\bar{o}}}}{\prod_{\bar{o}} k_{\bar{o}} !\ell_{\bar{o}} !} \\ & \CB_o^{\mathfrak{l}} \left(  \prod_{\bar{o} \in \Lab_+^{\mathfrak{l},o}} 
\prod_{i= 1}^{k_{\bar{o}}} \left( \Pi \CI_{\bar{o}}(T_{\bar{o},i}) \right)(\hat{\mathbf{v}},\xi) \left( e^{ \xi \mathcal{L}_{\bar{o}}} \mathcal{C}^{\ell_{\bar{o}}} [ f_{o,\bar{o}}^{\mathfrak{l},(k_{\bar{o}})}, \mathcal{L}_{\bar{o}} ] ( v_{\bar{o}}) \right) V_{\mathfrak{l}} \right) d\xi  
\end{equs}
where the $ T_{\bar{o},i} $ are decorated trees and   $  \sum_{\bar{o} ,i}  n_+(T_{\bar{o},i}) $ is a shorthand notation for
\begin{equs}
\sum_{\bar{o} \in \Lab_+^{\mathfrak{l},o}} \sum_{k_{\bar{o}}} \sum_{i=1}^{k_{\bar{o}}}  n_+(T_{\bar{o},i}).
\end{equs}
Then, if we fix the product $ \prod_{i=1}^{k_{\bar{o}}} \CI_{\bar{o}}(T_{\bar{o},i}) $, one has:
\begin{equs}
\prod_{i=1}^{k_{\bar{o}}} \CI_{\bar{o}}(T_{\bar{o},i}) = 
\prod_{i} \CI_{\bar{o}}(\tilde{T}_{\bar{o},i})^{k_{\bar{o},i}}
\end{equs}
where the $ \tilde{T}_{\bar{o},i} $ are distincts and $ \sum_{i} k_{\bar{o},i} = k_{\bar{o}}$. This term appears in $ B $ with the following combinatorial coefficient:
\begin{equs}
\frac{k_{\bar{o}}!}{\prod_{i} k_{\bar{o},i} !}.
\end{equs}
Let us notice
\begin{equs}
S_{\text{\scriptsize{root}}} \left( \lambda^{k}_{\mathfrak{l}}\prod_{\bar{o},i} \CI_{\bar{o}}(\tilde{T}_{\bar{o},i})^{k_{\bar{o},i}}  \right) =  k! \prod_{\bar{o},i} k_{\bar{o},i}!
\end{equs}
and 
\begin{equs}
 \sum_{\sum_{\bar{o}} \ell_{\bar{o}} = \ell}\prod_{\bar{o} \in \Lab_+^{\mathfrak{l},o}} \frac{1}{\ell_{\bar{o}}!}
\left( e^{ \xi \mathcal{L}_{\bar{o}}} \mathcal{C}^{\ell_{\bar{o}}} [ f_{o,\bar{o}}^{\mathfrak{l},(k_{\bar{o}})}, \mathcal{L}_{\bar{o}} ] ( v_{\bar{o}}) \right) = \frac{\partial_{ \ell}}{\ell !}\prod_{\bar{o} \in \Lab_+^{\mathfrak{l},o}} 
\left( e^{ \xi \mathcal{L}_{\bar{o}}}  f_{o,\bar{o}}^{\mathfrak{l},(k_{\bar{o}})}( v_{\bar{o}}) \right)  .
\end{equs}
In this identity, we exploit the Leibniz rule for the derivative $ \partial $. It can be understood as a variant of Faa di Bruno formula see \cite[Lem. A.1]{BCCH}.
We can rewrite $ B $ into
\begin{equs}
B & =  e^{ \tau  \mathcal{L}_o} v_o  + \sum_{\mathfrak{l} \in \Lab_-} \int_0^\tau e^{ (\tau-\xi)  \mathcal{L}_o} \CB_{o}^{\mathfrak{l}} \left( \sum_{\ell+ \sum_{\bar{o},i}  n_+(\tilde{T}_{\bar{o},i}) \leq r} \frac{\xi^{\ell}}{\ell!}\partial_{\ell} \right. \\  & \left. \prod_{\bar{o} \in \Lab_+^{o,\mathfrak{l}}} 
\prod_{i} \frac{1}{ k_{\bar{o},i} !} \left( \Pi \CI_{\bar{o}}(\tilde{T}_{\bar{o},i})^{k_{\bar{o},i}} \right)(\hat{\mathbf{v}},\xi) \left( e^{ \xi \mathcal{L}_{\bar{o}}}  f_{o,\bar{o}}^{\mathfrak{l},(k_{\bar{o}})}( v_{\bar{o}}) \right) V_{\mathfrak{l}} \right) d\xi  
\\ & =  e^{ \tau  \mathcal{L}_o} v_o  + \sum_{\mathfrak{l} \in \Lab_-} \int_0^\tau e^{ (\tau-\xi)  \mathcal{L}_o} \CB_o^{\mathfrak{l}}\left( \sum_{\ell+ \sum_{\bar{o},i}  n_+(\tilde{T}_{\bar{o},i}) \leq r} \frac{\xi^{\ell}}{S_{\text{\scriptsize{root}}} \left( \lambda^{\ell}_{\mathfrak{l}}\prod_{\bar{o},i} \CI_{\bar{o}}(\tilde{T}_{\bar{o},i})^{k_{\bar{o},i}}  \right)} \right. \\ &\left. \prod_{\bar{o} \in \Lab_+^{\mathfrak{l},o}} 
\prod_{i}  \left( \Pi \CI_{\bar{o}}(\tilde{T}_{\bar{o},i})^{k_{\bar{o},i}} \right)(\hat{\mathbf{v}},\xi) \Upsilon^{\Psi_o} _{\text{\scriptsize{root}}} \left( \lambda^{\ell}_{\mathfrak{l}}\prod_{\bar{o},i} \CI_{\bar{o}}(\tilde{T}_{\bar{o},i})^{k_{\bar{o},i}}  \right)(\mathbf{v},\xi) V_{\mathfrak{l}} \right) d\xi  .
\end{equs}
Then, 
\begin{equs}
B  & =  e^{ \tau \mathcal{L}_o} v_o  + \sum_{\mathfrak{l} \in \Lab_-} \int_0^\tau e^{ (\tau-\xi)  \mathcal{L}_o}  \sum_{\ell+ \sum_{\bar{o},i}  n_+(\tilde{T}_{\bar{o},i}) \leq r} \left(\tilde{\Pi}_{o}  \lambda^{\ell}_{\mathfrak{l}}\prod_{\bar{o},i} \CI_{\bar{o}}(\tilde{T}_{\bar{o},i})^{k_{\bar{o},i}}  \right)(\hat{\mathbf{v}},\xi) d\xi \\
& = e^{ \tau  \mathcal{L}_o} v_o  + \sum_{T \in \mathcal{T}^r} \int_0^\tau e^{ (\tau-\xi)  \mathcal{L}_o} \left(\tilde{\Pi}_{o} T \right)(\hat{\mathbf{v}},\xi)  d\xi
\\ & = e^{ \tau  \mathcal{L}_o} v_o  + \sum_{T \in \mathcal{T}^r} \left(\Pi \mathcal{I}_o(T) \right)(\hat{\mathbf{v}},\tau) = w^r_o(\hat{\mathbf{v}},\tau)
\end{equs}
which concludes the proof.
\end{proof}
\subsection{Numerical scheme and local error analysis}\label{subsec:num-erranal}

Now, we are able to describe the general numerical scheme:
\begin{definition}[The general numerical scheme] \label{scheme} For fixed $  r \in \N $ and a domain $ A $, we define the general numerical scheme as:
\begin{equs}\label{genscheme}
u_{o,A}^{r}( \hat{\mathbf{v}},\tau) =e^{ \tau  \mathcal{L}_o} v_o + \sum_{T \in \CT^{r+1}} \Pi_A^r \left( \CI_{o}(T) \right)(\hat{\mathbf{v}},\tau).
\end{equs}
\end{definition}
The numerical scheme \eqref{genscheme}  approximates  the exact solution locally up to order $r+2$. More precisely, the following theorem holds.
\begin{theorem}[Local error]\label{thm:genloc} 
The numerical scheme \eqref{genscheme}  with initial value $v_{o} = u_o(0)$ approximates the exact solution $u_o $ up to a  local error of type
\begin{equs}
u^{r}_{o,A}(\hat{\mathbf{v}},\tau) - u_{o}(\hat{\mathbf{v}},\tau) = \sum_{T \in \CT^{r+1}} \CO\left(\tau^{r+2} \CL^{r}_{\text{\tiny{low}}}(\CI_{o}(T),\hat{\mathbf{v}},A)  \right)
\end{equs}
where the operator $ \CL^{r}_{\text{\tiny{low}}}(\CI_{o}(T),\hat{\mathbf{v}},A)  $, given in Definition \ref{def:Llow}, embeds the necessary regularity of the solution.
\end{theorem}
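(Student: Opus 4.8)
The plan is to combine the two results already available: the approximate-solution statement of Proposition~\ref{exact_solution_order_r} and the tree-by-tree local error estimate of Theorem~\ref{approxima_tree}. The overall strategy is a triangle-inequality argument: we write $u^{r}_{o,A}(\hat{\mathbf{v}},\tau) - u_{o}(\hat{\mathbf{v}},\tau)$ as the sum of $u^{r}_{o,A}(\hat{\mathbf{v}},\tau) - w^{r+1}_o(\hat{\mathbf{v}},\tau)$ (the discretisation error, controlled by Theorem~\ref{approxima_tree}) and $w^{r+1}_o(\hat{\mathbf{v}},\tau) - u_{o}(\hat{\mathbf{v}},\tau)$ (the truncation error, controlled by Proposition~\ref{exact_solution_order_r}), and then check that the first term dominates the second in the sense of regularity, so that only the commutator-type errors $\CL^{r}_{\text{\tiny{low}}}$ survive in the statement.

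In detail, I would proceed as follows. First, recall from Proposition~\ref{exact_solution_order_r} (applied with $r$ replaced by $r+1$) that
\begin{equs}
w^{r+1}_o(\hat{\mathbf{v}},\tau) = e^{\tau \mathcal{L}_o} v_o + \sum_{T \in \CT^{r+1}} \left(\Pi \CI_o(T)\right)(\hat{\mathbf{v}},\tau),
\end{equs}
and that $w^{r+1}_o$ solves the Duhamel formulation \eqref{duhLin_it} up to a remainder of order $\CO(\tau^{r+2})$ whose leading term involves the commutator $\CC^{r+1}[f,\mathcal{L}_o]$ with $f$ built from the coefficients $\Psi_o$, and crucially not a full power $\mathcal{L}_o^{r+1}$. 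Since the exact solution $u_o$ with $v_o = u_o(0)$ satisfies \eqref{duhLin_it} exactly, a standard fixed-point/Gronwall comparison on the mild formulation (using that $e^{t\mathcal{L}_o}$ is a contraction semigroup by Assumption~\ref{assump:L} and that the nonlinearities $f^{\mathfrak{l}}_{o,\mathcal{o}}$ are locally Lipschitz on the relevant domain) gives $w^{r+1}_o(\hat{\mathbf{v}},\tau) - u_o(\hat{\mathbf{v}},\tau) = \CO(\tau^{r+2})$ with the same commutator-driven error structure. Second, apply Theorem~\ref{approxima_tree}, bound \eqref{bound_2}, to each tree $\CI_o(T)$ with $T \in \CT^{r+1}$: using the shorthand $\Pi^r_A = \Pi_A \mathcal{D}_r$ and recalling from Definition~\ref{genscheme} that $u^{r}_{o,A}(\hat{\mathbf{v}},\tau) = e^{\tau\mathcal{L}_o}v_o + \sum_{T \in \CT^{r+1}} \Pi^r_A(\CI_o(T))(\hat{\mathbf{v}},\tau)$, subtracting $w^{r+1}_o$ gives
\begin{equs}
u^{r}_{o,A}(\hat{\mathbf{v}},\tau) - w^{r+1}_o(\hat{\mathbf{v}},\tau) = \sum_{T \in \CT^{r+1}} \left(\Pi^r_A \CI_o(T) - \Pi \CI_o(T)\right)(\hat{\mathbf{v}},\tau) = \sum_{T \in \CT^{r+1}} \CO\!\left(\tau^{r+2}\,\CL^{r}_{\text{\tiny{low}}}(\CI_o(T),\hat{\mathbf{v}},A)\right),
\end{equs}
which is exactly the announced form.

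Third, I would combine the two pieces and argue that the truncation error $w^{r+1}_o - u_o = \CO(\tau^{r+2})$ is absorbed into the sum of the $\CL^{r}_{\text{\tiny{low}}}$ terms: indeed, among the trees in $\CT^{r+1}$ one finds the ``linear'' tree $\CI_o(T_0)$ whose associated local error $\CL^{r}_{\text{\tiny{low}}}(\CI_o(T_0),\hat{\mathbf{v}},A)$ already contains, via the recursive Definition~\ref{def:Llow} and the Taylor remainder $R^r_{o,A}$ of Lemma~\ref{Taylor_bound}, the nested commutators $\CC^{r+1}[f,\mathcal{L}_o]$-type contributions that also appear in the truncation remainder of Proposition~\ref{exact_solution_order_r}; hence the truncation error requires no more regularity than the worst of the discretisation errors and can be folded into the stated sum without enlarging the regularity requirement. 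The conclusion then reads off immediately.

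The main obstacle I expect is the bookkeeping in this last absorption step: one must verify that the error term from Proposition~\ref{exact_solution_order_r}, which comes from Taylor-expanding the nonlinearities $f^{\mathfrak{l}}_{o,\bar o}(e^{\xi \mathcal{L}_{\bar o}} v_{\bar o})$ around $e^{\xi\mathcal{L}_{\bar o}} v_{\bar o}$ via \eqref{commutator}, is indeed of the same commutator type $\CC^{r+1}[f,\mathcal{L}_o]$ that the recursive $\CL^{r}_{\text{\tiny{low}}}$ already encodes for some tree in $\CT^{r+1}$ — rather than something strictly worse. A secondary technical point is justifying the Gronwall/contraction comparison between $w^{r+1}_o$ and $u_o$ on the correct function space $A$ (with boundary conditions encoded in the domains), which needs the local Lipschitz bounds on the $f^{\mathfrak{l}}_{o,\mathcal{o}}$ and the contraction property of the semigroups; this is routine but must be stated carefully since the nonlinearities are only assumed smooth, not globally Lipschitz. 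Everything else is a direct assembly of results already proved in the excerpt.
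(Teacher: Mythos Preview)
Your proposal is correct and follows essentially the same approach as the paper: split $u^{r}_{o,A} - u_o$ through an intermediate truncated expansion $w_o$, control the discretisation piece via Theorem~\ref{approxima_tree} tree by tree, control the truncation piece via Proposition~\ref{exact_solution_order_r}, and then argue that the truncation remainder is dominated (in regularity) by the commutator-type terms already present in $\CL^{r}_{\text{\tiny{low}}}$. The paper's proof is precisely this triangle-inequality argument; the only structural difference is that the paper uses $w^{r}_o$ (summing over $\CT^{r}$) as the intermediate rather than your $w^{r+1}_o$, and it does not spell out the Gronwall/stability step you mention, simply asserting $u_o - w^r_o = \CO(\tau^{r+2}p(\hat{\mathbf v},(\mathcal L_o)))$ directly from Proposition~\ref{exact_solution_order_r}.

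One small point to watch: if you genuinely apply Proposition~\ref{exact_solution_order_r} with $r$ replaced by $r+1$, the truncation remainder is $\CO(\tau^{r+3})$ with commutators $\CC^{r+2}[f,\mathcal L_o]$, not $\CO(\tau^{r+2})$ with $\CC^{r+1}$ as you wrote; this is a higher power of $\tau$ but also a higher commutator order, so the absorption into the $\CL^{r}_{\text{\tiny{low}}}$ terms (which carry $\CC^{r+1}$-type commutators via Proposition~\ref{decomp_Pi}) is slightly more delicate than you indicate. Using $w^r_o$ as in the paper keeps both sides at the same commutator level $\CC^{r+1}$ and makes the domination claim immediate.
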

\begin{proof}
We recall that the exact solution $w^r$ up to order $ r $ is given by 
\begin{equs}
w^r_o(\hat{\mathbf{v}},\tau) = e^{\tau \mathcal{L}_o} v_o + \sum_{T \in \CT^r}  \left( \Pi \CI_{o}(T) \right)(\hat{\mathbf{v}},\tau)
\end{equs}
which satisfies from Proposition~\ref{exact_solution_order_r}
\begin{equs}\label{app1}
 u_o(\tau) - w^r_o(\tau)  = 
 \CO\left( \tau^{r+2} p (\hat{\mathbf{v}},(\mathcal{L}_o)_{o \in \Lab_+})\right)
\end{equs}
 for some polynomial $ p$ such that every $ \mathcal{L}_o $ appear under the form $ \mathcal{C}^{r+1}[f,\mathcal{L}_o] $. Thanks to  Proposition~\ref{approxima_tree} we furthermore obtain that
\begin{equs}\label{app2}
u_{o,A}^{r} & (\hat{\mathbf{v}},\tau) - w_{o}^{r}(\hat{\mathbf{v}},\tau) 
= \sum_{T \in \CT^{r}}  \left( \Pi -\Pi_A^{r} \right) \left( \CI_{o}(T)\right)(\hat{\mathbf{v}},\tau) \\
& = \sum_{T \in \CT^{r}} \CO\left(\tau^{r+2} \CL^{r}_{\text{\tiny{low}}}(T,\hat{\mathbf{v}},A)  \right).
\end{equs}
Next we write
\begin{equs}
u_{o,A}^{r}(\hat{\mathbf{v}},\tau) -u_{o}(\hat{\mathbf{v}},\tau)  & = u_{o,A}^{r}(\hat{\mathbf{v}},\tau) - w^r_{o}(\hat{\mathbf{v}},\tau)  + w^r_o(\hat{\mathbf{v}},\tau) - u_{o}(\hat{\mathbf{v}},\tau)
\end{equs}
where by the definition of $\CL^{r}_{\text{\tiny{low}}}(T,\hat{\mathbf{v}},A)$ we easily see that the approximation error~\eqref{app2} is dominant compared to \eqref{app1}. Indeed, we will have also commutators of the form $\mathcal{C}^{r+1}[f,\mathcal{L}_o]$ and $ \mathcal{C}^{r+1}[f,\mathcal{L}_{\scriptsize{dom}}] $ due to Proposition~\ref{decomp_Pi}. The extra regularity needed is coming from the Taylor expansion of the lower part in Definition~\ref{def_CK}.
\end{proof}

\section{Examples}\label{sec:examples}

In this section we illustrate our general framework (see Definition \ref{genscheme}) and its local error 
analysis (see Theorem \ref{thm:genloc}) on two examples: The Gross--Pitaevskii equation (see Section~\ref{Gross_Pitaevskii}) and the Sine--Gordon equation (see Section \ref{sec:sine}).
\subsection{The
 Gross--Pitaevskii equation} \label{Gross_Pitaevskii}
As a first example let us consider the  Gross--Pitaevskii  (GP) equation
\begin{equation}\label{evGP}
i  \partial_t u(t,x) +  \Delta u(t,x) =  V(x)u(t,x)  + \vert u(t,x)\vert^2 u(t,x)\quad (t,x) \in \R \times  \Omega 
\end{equation}
on a sufficiently smooth domain 
$\Omega \subset \R^d$ in dimension $d \le 3$, and an initial condition
\begin{equation}
\label{init}
u_{|t=0}= u_{0}.
\end{equation}
We prescribe homogeneous Dirichlet boundary conditions,
$$
u(t,\cdot)|_{\partial \Omega} = 0, \quad V(\cdot)|_{\partial \Omega} = 0,
$$
where $\Omega$ is a smooth open set with compact boundary. 
We define the operator $\mathcal{L} = i\Delta$ on the Hilbert space $L^2(\Omega)$. Its domain  is given by 
\begin{equation}\label{dom:Ddi}
 D(\Delta) = D(\mathcal{L}) = (H^2 \cap H^1_0)(\Omega)
 \end{equation}
where $H^2(\Omega), H^1_0(\Omega)$ denote the classical Sobolev spaces. 

One setting of the Gross-Pitaevskii equation is to describe the dynamics of Bose-Einstein condensates in a potential trap. In many physically relevant situations the potential is  rough or disordered (\cite{GPHenPeter}, \cite{GPdisorder}, \cite{GPphysics}) which motivates the study of the low-regularity framework.

\subsubsection{First order low regularity integrator for Gross--Pitaevskii}\label{sec::Gross_1}

\begin{corollary}\label{cor_schemeGP1}
At first order our general low regularity scheme  \eqref{genscheme}  for the Gross--Pitaevskii equation  \eqref{evGP}   takes the form
\begin{equs}\label{schemeGP}
u^{n+1}  & = \Phi_{\text{\tiny GP}}^\tau(u^n) = e^{i \tau    \Delta} u^n -i \tau  \left[ (e^{i \tau \Delta}u^n) (e^{i \tau \Delta}
 \varphi_1(- i \tau \Delta) V) \right. \\ & \left.  + (e^{i \tau \Delta}(u^n)^2) (e^{i \tau \Delta} \varphi_1( -2 i\tau \Delta) \overline u^n)  \right]
\end{equs}
 where the filter function $\varphi_1$ is defined as $\varphi_1(\sigma) = \frac{e^\sigma - 1}{\sigma}$. 
The scheme  \eqref{schemeGP} is locally of order $\CO(\tau^2|\nabla|(u+V))$. 
 
In case of more regular solutions and potential 
the above low regularity scheme can be simplified to
\begin{equs}\label{schemeGPclass}
u^{n+1} = e^{i \tau    \Delta} u^n -i \tau  \left( u^n V+ (u^n)^2 \overline u^n  \right),
\end{equs}
which is locally of order $\CO(\tau^2 \Delta (u + V))$.

\end{corollary}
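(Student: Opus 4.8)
The plan is to derive the scheme \eqref{schemeGP} directly from the general construction \eqref{genscheme} specialised to $r=0$, and then to read off the local error from Theorem~\ref{thm:genloc}. First I would identify the relevant set of decorated trees $\CT^{1}$ for the Gross--Pitaevskii equation written in the form \eqref{nls3}. Since we want a first order scheme we take $r=0$, so $\CT^{r+1}=\CT^{1}$ consists of the trees $T$ with $n_+(T)\le 1$ and $(\Pi\CI_o(T))\neq 0$; these are precisely $\CI_o(\lambda_0)$ and $\CI_o(\lambda_1)$ (the two trees with a single edge and a node decoration encoding the nonlinearity $|u|^2u$ respectively the potential term $uV$), as catalogued in Example~\ref{ex_Gross}. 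The expansion \eqref{genscheme} then reads
\begin{equs}
u^{1}_{o,A}(\hat{\mathbf v},\tau)=e^{\tau\mathcal L}v_o+\bigl(\Pi^0_A\CI_o(\lambda_0)\bigr)(\hat{\mathbf v},\tau)+\bigl(\Pi^0_A\CI_o(\lambda_1)\bigr)(\hat{\mathbf v},\tau),
\end{equs}
and it remains to evaluate the two approximated iterated integrals.

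Next I would compute $\Pi^0_A\CI_o(\lambda_1)$ and $\Pi^0_A\CI_o(\lambda_0)$ using Definition~\ref{def_CK} with $\ell=0$ and $r=0$. For $\CI_o(\lambda_1)$, from Example~\ref{ex_Gross_cont} one has $(\tilde\Pi_{o,A}\lambda_1)(\hat{\mathbf v},\xi)=(-ie^{\xi\mathcal L}v)V$, so in the notation of Definition~\ref{def_CK}, $J=\{1,2\}$ with $u_1=-iv$, $\mathcal L_1=\mathcal L=i\Delta$, $u_2=V$, $\mathcal L_2=0$; hence $\mathcal A_1=0$, $\mathcal A_2=-\mathcal L$, and $\mathcal L_{\text{dom}}=\mathcal P_{\text{dom}}(\{0,-\mathcal L\})=-\mathcal L$ with $I=\{2\}$, $\mathcal A^i_{\text{low}}=0$. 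Applying \eqref{K_reson} with $r=0$ kills all commutator terms ($m=n=p=k_i=0$), leaving the single exact integral $-i\int_0^\tau (e^{\tau\mathcal L}v)(e^{\xi\mathcal L_{\text{dom}}+\tau\mathcal L}V)\,d\xi=-i(e^{\tau\mathcal L}v)\int_0^\tau e^{(\tau-\xi)\mathcal L}V\,d\xi$; integrating the scalar factor $e^{-\xi\mathcal L}$ in $\xi$ and using $\mathcal L=i\Delta$ produces $-i\tau\,(e^{i\tau\Delta}v)(e^{i\tau\Delta}\varphi_1(-i\tau\Delta)V)$ with $\varphi_1(\sigma)=(e^\sigma-1)/\sigma$. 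The tree $\CI_o(\lambda_0)$ is handled identically — indeed this is exactly the computation carried out in Example~\ref{ex_5} with $r=0$ instead of $r=1$: there $u_1=-iv^2$, $\mathcal L_1=\mathcal L$, $u_2=\bar v$, $\mathcal L_2=-\mathcal L$, so $\mathcal L_{\text{dom}}=-2\mathcal L$ and, since $r=0$ forces all commutators to vanish, one obtains $-i\tau\,(e^{i\tau\Delta}v^2)(e^{i\tau\Delta}\varphi_1(-2i\tau\Delta)\bar v)$. Collecting both terms (and identifying $v_o=u^n$, $v_{\bar o}=\bar u^n$, $\tau$ the time step, $u^{n+1}=u^1_{o,A}(\hat{\mathbf v},\tau)$) gives exactly \eqref{schemeGP}.

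For the error bound I would invoke Theorem~\ref{thm:genloc}: the local error is $\sum_{T\in\CT^1}\CO(\tau^{2}\,\CL^0_{\text{low}}(\CI_o(T),\hat{\mathbf v},A))$. Using Definition~\ref{def:Llow} and Lemma~\ref{Taylor_bound} with $r=0$, $\ell=0$, for $T=\CI_o(\lambda_0)$ one gets $\CL^0_{\text{low}}(\CI_o(\lambda_0),\hat{\mathbf v},A)=\mathcal L^{-1,o}_{\text{low}}(\lambda_0,\hat{\mathbf v},A)+R^0_{o,A}(\tilde\Pi^{-1}_{o,A}\lambda_0)$, where the first summand is $1$ and the second, from case (ii) of Lemma~\ref{Taylor_bound} with $\sum k_i+m+n+p=1$ (and the same splitting $\mathcal L_{\text{dom}}=-2\mathcal L$, $I=\{2\}$, $\mathcal A^i_{\text{low}}=0$ as above), contributes the single term $(m,n,p)=(1,0,0)$ giving $\mathcal C[\mathcal M_{\{1\}}\mathcal M_{\{2\}},\mathcal L](-iv^2,\bar v)$, a first-order differential operator applied to $v$; similarly for $T=\CI_o(\lambda_1)$ the error term is $\mathcal C[\mathcal M_{\{1\}}\mathcal M_{\{2\}},\mathcal L](-iv,V)$, a first-order operator in $v$ and $V$. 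Thus $\CL^0_{\text{low}}$ is controlled by $|\nabla|(u+V)$, which yields the claimed local order $\CO(\tau^2|\nabla|(u+V))$. The main obstacle here is the bookkeeping of the commutator structure $\mathcal C[\cdot,\mathcal L]$ against $\mathcal L=i\Delta$ and tracking that no full power $\Delta$ (let alone $\Delta^2$) appears — this is precisely the content of Proposition~\ref{decomp_Pi} and the definition of $\CL^r_{\text{low}}$, and it is what guarantees the gain over a classical first order exponential integrator.

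Finally, for the simplified scheme \eqref{schemeGPclass} I would appeal to Remark~\ref{rmq:domainA}: if we assume more regularity, namely $A\subset D(\mathcal L_{\text{dom}})$ (here $D(\Delta)$), then instead of integrating the dominant part $e^{\xi\mathcal L_{\text{dom}}+\tau\mathcal L}$ exactly one Taylor-expands it too, i.e.\ one uses \eqref{def_fullTalor} with $r=0$, $\ell=0$. This replaces $e^{i\tau\Delta}\varphi_1(-i\tau\Delta)V$ by $e^{i\tau\Delta}V+\CO(\tau\Delta V)$ — equivalently, at the level of \eqref{def_fullTalor} only the $n=k_i=0$ term survives, giving $\int_0^\tau(e^{\tau\mathcal L}v)V\,d\xi=\tau(e^{i\tau\Delta}v)V$, and analogously for the cubic term — so that the scheme collapses to $u^{n+1}=e^{i\tau\Delta}u^n-i\tau(u^nV+(u^n)^2\bar u^n)$. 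The corresponding error term in Lemma~\ref{Taylor_bound}(i) with $\sum k_i+n=1$ now involves $\mathcal L_o\mathcal B(\prod\mathcal L_i^{k_i}u_i)$, a genuine second-order operator $\Delta$ applied to $u$ and $V$, whence the local order $\CO(\tau^2\Delta(u+V))$. This completes the proof.
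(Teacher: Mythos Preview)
Your approach is essentially the same as the paper's: specialise \eqref{genscheme} to $r=0$, identify $\CT^1=\{\lambda_0,\lambda_1\}$, apply $\CK^0_{o,A}$ via case~(i) of Definition~\ref{def_CK} for the low regularity scheme and via \eqref{def_fullTalor} for the simplified one, then read off the error from Theorem~\ref{thm:genloc} and Lemma~\ref{Taylor_bound}. Two small slips are worth flagging. First, in the error analysis you write that $\CL^{-1,o}_{\text{low}}(\lambda_0,\hat{\mathbf v},A)=1$; the clause $\CL^r_{\text{low}}(T,\cdot)=1$ for $r<0$ in Definition~\ref{def:Llow} applies to $\CL^r_{\text{low}}$, not to $\CL^{r,o}_{\text{low}}$. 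By \eqref{local_error_1} the leaf case gives $\CL^{-1,o}_{\text{low}}(\lambda_0,\hat{\mathbf v},A)=-iv^2\bar v$ (and $-ivV$ for $\lambda_1$); since these are zeroth order in derivatives your conclusion on the regularity is unaffected. Second, in the simplified scheme you write the surviving term of \eqref{def_fullTalor} as $\int_0^\tau(e^{\tau\CL}v)V\,d\xi$; in fact for $n=k_i=0$ no exponential factor remains and one gets $\int_0^\tau u_1u_2\,d\xi=-i\tau vV$, which is what actually produces \eqref{schemeGPclass} (your stated final formula is correct, the intermediate expression is not).
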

\begin{proof}
We choose $r=0$ in Definition \ref{scheme} in order to obtain a local error of order one. 
We recall from Example \ref{ex_Gross} that for the Gross-Pitaevskii equation we have that $\mathcal{L}_{o} = i\Delta$, $V_0 = 1$, $V_1 = V$, $ \Lab_+ = \lbrace o , \bar o\rbrace $, $u_{o} = u, \ u_{\bar{o}} = \bar{u}, \ v_{o} = v$ and $ v_{\bar{o}} = \bar{v}$. From equation \eqref{genscheme} it then follows that the first-order scheme is of the form,
\begin{equs}\label{GP1}
u^{0}_{o,A}( \hat{\mathbf{v}},\tau) = e^{ i\tau\Delta }v + \sum_{T \in \CT^{1}} \Pi_A^0 \left( \CI_{o}(T) \right)(\hat{\mathbf{v}},\tau),
\end{equs}
where $ \hat{\mathbf{v}} = (v,\bar{v},V)$ and where on has
\begin{equs}
\CT^1 & =  \lbrace  T_0, T_1  \, \rbrace, \quad T_0 = \lambda_0 =  \begin{tikzpicture}[scale=0.2,baseline=-5]
\coordinate (root) at (0,-0.5);
\node[xi] (rootnode) at (root) {}; 
\end{tikzpicture}\, , \quad T_1 = \lambda_1 = \begin{tikzpicture}[scale=0.2,baseline=-5]
\coordinate (root) at (0,-0.5);
\node[xi,blue] (rootnode) at (root) {};
\end{tikzpicture}.
\end{equs}
We recall from \eqref{NLStree1} that
 \begin{equs}
 \CI_{o}(\lambda_{0}) = \begin{tikzpicture}[scale=0.2,baseline=-5]
\coordinate (root) at (0,-1);
\coordinate (tri) at (0,1);
\draw[kernels2] (root) -- (tri);
\node[not] (rootnode) at (root) {};
\node[xi] (trinode) at (tri) {};
\end{tikzpicture}
\end{equs}
 encodes the iterated integral
\begin{equs} \label{int_NLS1}
\left( \Pi \CI_o(\lambda_0) \right) (\hat{\mathbf{v}},\tau) =   -i \int_0^\tau e^{i(\tau-s)\Delta} \left( \left( e^{is \Delta}v^2 \right) \left(  e^{ - is \Delta} \bar{v} \right) \right) ds
\end{equs}
and that 
\begin{equs} 
\CI_{o}(\lambda_{1}) & = \begin{tikzpicture}[scale=0.2,baseline=-5]
\coordinate (root) at (0,-1);
\coordinate (tri) at (0,1);
\draw[kernels2] (root) -- (tri);
\node[not] (rootnode) at (root) {};
\node[xi,blue] (trinode) at (tri) {};
\end{tikzpicture}
\end{equs}
 encodes the iterated integral
\begin{equs}\label{int_GP1}
 \left( \Pi \CI_o(\lambda_0) \right) (\hat{\mathbf{v}},\tau) = -i \int_0^\tau e^{i(\tau -s)\Delta} \left( \left( e^{is \Delta}v \right) V \right) ds.
\end{equs}

In order to compute the approximation \eqref{GP1}, we refer to equation \eqref{Pi_A} where the approximated character $\Pi_A^r$ is defined.
We have, for $i=0,1$, that
\begin{equs}
(\Pi^0_{A} \CI_{o}(\lambda_{i}))(\hat{\mathbf{v}},\tau) &= \mathcal{K}_{o,A}^0 ( \tilde{\Pi}_{o,A} \CD_{-1}(\lambda_i)(\hat{\mathbf{v}},\cdot))(\tau) \\
&= \mathcal{K}_{o,A}^0\left(\frac{\Upsilon_{\text{\scriptsize{root}}}^{\Psi_{o}}[\lambda_i]}{S_{\text{\scriptsize{root}}}(\lambda_i)}(\hat{\mathbf{v}},\cdot) V_{i} \right)(\tau)  
\end{equs}
%
where to obtain the second line we used the definition \ref{DR} of $\CD_{-1}$, namely that $\CD_{-1}(\lambda_i) = \lambda_i$. By recalling the computations made in Example \ref{ex_Gross_cont} one has,
\begin{equs}
&S_{\text{\scriptsize{root}}}(\lambda_0) =  S_{\text{\scriptsize{root}}}(\lambda_1) = 1, \\ 
&\Upsilon_{\text{\scriptsize{root}}}^{\Psi_{o}}[\lambda_0](\mathbf{v},\xi) = \left( -ie^{i\xi \Delta} v^2 \right) \left( e^{- i\xi \Delta} \bar{v} \right), \quad 
 \Upsilon_{\text{\scriptsize{root}}}^{\Psi_{o}}[\lambda_1](\mathbf{v},\xi) =  -ie^{i\xi \Delta} v,
\end{equs}
where $ \mathbf{v} = (v, \bar{v}) $ and we recall that we set $\mathcal{B}^{\mathfrak{l}}_{o} = \id$ for $ (o,\mathfrak{l}) \in \Lab_+ \times \Lab_- $ when studying the equation \eqref{evGP}. 
Hence, by collecting the above computations one has,
\begin{equs}\label{int-1}
\Pi_A^0 \left( \CI_{o}(\lambda_0) \right)(\hat{\mathbf{v}},\tau) &= \mathcal{K}_{o,A}^0\left( F^1(\hat{\mathbf{v}},\cdot) )\right)(\tau), \\
\Pi_A^0 \left( \CI_{o}(\lambda_1) \right)(\hat{\mathbf{v}},\tau) &= \mathcal{K}_{o,A}^0\left(F^2(\hat{\mathbf{v}},\cdot) \right)(\tau)
\end{equs}
where
\begin{equation}\label{F^1_GP}
F^1(\hat{\mathbf{v}},\xi):= -i(e^{i\xi\Delta} v^2)( e^{- i\xi \Delta \bar{v}} )
\end{equation}
and
\begin{equation}\label{F^2_GP}
F^2(\hat{\mathbf{v}},\xi):= -i(e^{i\xi \Delta} v)V.
\end{equation}
We are left to apply Definition \ref{def_CK} of the operator $\mathcal{K}_{o,A}^0$ on \eqref{F^1_GP} and \eqref{F^2_GP} which will yield a first-order approximation of the integrals \eqref{int_NLS1} and \eqref{int_GP1}. The approximation of these integrals, and hence the  structure of the scheme, will depend on the regularity assumptions on the initial data $v$ and the potential $V$. We first show the construction of the first order scheme \eqref{schemeGP}, which requires $H^1$-regularity on $v$ and $V$. Hence, by taking into account the boundary conditions, we fix $A = H^1_0(\Omega)^3$ and construct a first order scheme for $(v, \bar{v}, V) \in A $  (see also Remark \ref{rmq:domainA} for further details on the choice of $A$).

\underline{The case of $A = (H^1_0(\Omega))^3$}:\\
We first note that given our regularity assumptions we have that $v, V \not\in D(\CL_{o}) = H^2(\Omega) \cap H^1_0(\Omega)$.   Hence, our approximation cannot consist of merely applying Taylor-expansions \eqref{def_fullTalor} of all the operators, since it would require $H^2$ regularity on the initial data and on the potential. Indeed, given that $r, \ell = 0$ we have that $$\cup_{n + \sum_{i\in J}k_i \le 1} D(G_{n,(k_i)_{i\in J}}) \subseteq (H^2(\Omega))^3$$ which implies that we do not have that $A \subseteq D(G_{n,(k_i)_{i\in J}})$, for all $n + \sum_{i\in J}k_i \le 1$.  In order to make an approximation of order one of the integrals \eqref{int-1} while only requiring $H^1(\Omega)$ regularity on the initial data and potential we will apply the first point $ (i) $ of Definition~\ref{def_CK}.%

We start by dealing with $F^1$, namely the first order approximation of the integral~\eqref{int_NLS1}.
\\
1. Computation of $\Pi_A^0 \left( \CI_{o}(\lambda_0) \right)(\hat{\mathbf{v}},\tau)$.
Using the notations in Definition \ref{def_CK} we compute the operator interactions, as is done in example \ref{ex_5} to obtain the following,
\begin{equs}
\mathcal{L}_1 = i\Delta, \ \mathcal{L}_2 = -\mathcal{L}_1, \ J = \{1,2 \}, \ \text{and hence}, \ \mathcal{A}_1 = 0, \ \mathcal{A_2} = -2i\Delta.
\end{equs} 
This implies that 
\begin{equs}
\mathcal{L}_{\tiny{\text{dom}}} = \mathcal{A_2} = -2i\Delta, \quad I = \{ 2\},
\end{equs}
and that
\begin{equs}
 \mathcal{A}^1_{\tiny{\text{low}}} =\mathcal{A}_1 = 0, \quad
\mathcal{A}^2_{\tiny{\text{low}}} = \mathcal{A}_2 - \mathcal{L}_{\tiny{\text{dom}}} = 0.
\end{equs}
We note that following \eqref{F} we indeed recover the same initial expression  \eqref{F^1} of $F^1$:
\begin{equs}
F^1(\hat{\mathbf{v}},\xi) &= (e^{-i\xi\Delta}u^a_2)(e^{i\xi\Delta}u^a_1), \quad u^a_1 = -iv^2, \ \  u^a_2 = \bar{v}.
\end{equs}
We are now ready to compute $\mathcal{K}_{o,A}^0(F^1)$ using equation \eqref{K_reson}. Given the form of $F^1$, and the order of the scheme we enter the case where $l=0$ and $r=0$ respectively.  This implies that $q = 0 = n = m = p$. Hence, $\mathcal{K}_{o,A}^0(F^1)$ takes the following simple form,
\begin{equs}\label{approx_lambda0}
\mathcal{K}_{o,A}^0(F^1(\hat{\mathbf{v}},\cdot))(\tau) &= \int_0^\tau (e^{i\tau\Delta} \mathcal{M}_{\{1\}})(e^{-2i\xi\Delta+ i\tau\Delta }\mathcal{M}_{\{2\}})(-iv^2,\bar{v})d\xi \\
&=-i\int_0^\tau (e^{i\tau\Delta}v^2)(e^{-2i\xi\Delta+ i\tau\Delta }\bar{v})d\xi .
\end{equs}
%
By integrating exactly the above expression using the $\varphi_1$ function, and by \eqref{int-1} we have hence shown that,
\begin{equs}\label{lambda_0}
\Pi_A^0 \left( \CI_{o}(\lambda_0) \right)(\hat{\mathbf{v}},\tau) = -i\tau(e^{i\tau\Delta }v^2)(e^{i\tau\Delta}\varphi_1(-2i\tau\Delta)\bar{v}).
\end{equs} \\
2. Computation of $\Pi_A^0 \left( \CI_{o}(\lambda_1) \right)(\hat{\mathbf{v}},\tau)$.
Similarly, we apply definition \ref{def_CK} to compute $\mathcal{K}_{o,A}^0(F^2)$. Given the expression \eqref{F^2_GP} of $F^2$, one has
\begin{equs}
&\mathcal{L}_1 = i\Delta, \ \mathcal{L}_2 = 0, \ J = \{1,2 \}, u_1^b = -iv, \ \  u_2^b = V,\ \text{and hence}, \\
& \mathcal{A}_1 = 0, \ \mathcal{A_2} = -i\Delta.
\end{equs} 
This implies that 
\begin{equs}
\mathcal{L}_{\tiny{\text{dom}}} = \mathcal{A_2} = -i\Delta, \quad I = \{ 2\},
\end{equs}
and that
\begin{equs}
& \mathcal{A}^1_{\tiny{\text{low}}} =\mathcal{A}_1 = 0,  \quad\mathcal{A}^2_{\tiny{\text{low}}} = \mathcal{A}_2 - \mathcal{L}_{\tiny{\text{dom}}} = 0.
\end{equs}
%
%
%
Then, again by equation \eqref{K_reson} we have
\begin{equs}
\mathcal{K}_{o,A}^0(F^2)(\tau) &= \int_0^\tau (e^{i\tau\Delta} \mathcal{M}_{\{1\}})(e^{i(\tau - \xi)\Delta}\mathcal{M}_{\{2\}})(-iv,V)d\xi \\
&=-i\int_0^\tau (e^{i\tau\Delta}v)(e^{i(\tau - \xi)\Delta }V)d\xi \\
&=-i\tau(e^{i\tau\Delta}v)(e^{i\tau\Delta}\varphi_1(-i\tau\Delta)V).
\end{equs}
Hence, we have 
\begin{equs}
\Pi_A^0 \left( \CI_{o}(\lambda_1) \right)(\hat{\mathbf{v}},\tau) = -i\tau(e^{i\tau\Delta}v)(e^{i\tau\Delta}\varphi_1(-i\tau\Delta)V).
\end{equs}
Plugging the above computation into \eqref{GP1} yields the first order scheme \eqref{schemeGP}. 

\underline{The case of $A = (H^2\cap H^1_0)(\Omega)^3$}:\\
Given that $A = D(\CL_{o})^3$, we apply the Taylor expansion \eqref{def_fullTalor} with $r=\ell = 0$, to obtain:
\begin{equs}
\CK_{o,A}^0\left((\Upsilon_{\text{\scriptsize{root}}}^{\Psi_{o}}[\lambda_0] + \Upsilon_{\text{\scriptsize{root}}}^{\Psi_{o}}[\lambda_1])(\mathbf{v},\cdot) \right)(\tau) & = \int_0^\tau \Pi_{i\in \lbrace 1,2 \rbrace} u_i^ad\xi + \int_0^\tau \Pi_{i\in \lbrace 1,2 \rbrace} u_i^bd\xi\\
&= -i\tau(v^2v + vV),
\end{equs}
which thanks to \eqref{GP1} yields the first order scheme \eqref{schemeGPclass}.
\\
{\noindent \textbf{Local error analysis}.} Using the recursive formula in Definition~\ref{def:Llow}, one gets for $ T \in \lbrace  \lambda_0, \lambda_1 \rbrace $
\begin{equs}
\mathcal{L}^{0}_{\text{\tiny{low}}}(\CI_{o}( T ),\hat{\mathbf{v}},A)  =    \mathcal{L}^{-1,o}_{\text{\tiny{low}}}(  T, \hat{\mathbf{v}},A) +     R^0_{o,A}\left(\left(\Pi_A \mathcal{I}^0_{o}( T) \right)(\hat{\mathbf{v}},\cdot)  \right) .
\end{equs}
Then by \eqref{local_error_1}, we get for $ \mathfrak{l}\in \lbrace 0,1 \rbrace $
\begin{equs}
\mathcal{L}^{-1,o}_{\text{\tiny{low}}}(\lambda_{\mathfrak{l}},\hat{\mathbf{v}},A)   =  \frac{\Upsilon_{\text{\scriptsize{root}}}^{\Psi_o}[T]}{S_{\text{\scriptsize{root}}}(\lambda_i)}(\mathbf{v},0) V_{\mathfrak{l}}   
\end{equs}
which gives
\begin{equs}
\mathcal{L}^{-1,o}_{\text{\tiny{low}}}(\lambda_{1},\hat{\mathbf{v}},A) =-i vV, \quad  \mathcal{L}^{-1,o}_{\text{\tiny{low}}}(\lambda_{0},\hat{\mathbf{v}},A) =  -i v^2\bar{v}.
\end{equs}
It remains to compute $ R^0_{o,A}\left(\left(\Pi_A  T \right)  \right)$ whose value will depend on $ A $. We start with the case $ A =   (H^1_0(\Omega))^3$. From our previous computations for the scheme, one has to apply $ (ii) $ from Lemma~\ref{Taylor_bound} that gives using notations of the proof of Corollary~\ref{cor_schemeGP1}
\begin{equs}
R_{o,A}^r((\tilde{\Pi}_{o,A}\lambda_0)(\hat{\mathbf{v}},\cdot))  & =   \sum_{ \sum_{i \in J} k_i + m + n + p = r - \ell +1  }   \mathcal{C}^{m}[ \left(  \mathcal{C}^{n}[\mathcal{M}_{J \setminus I},  \mathcal{L}_o] \right) \\ & \left(  \mathcal{C}^{p}[ \mathcal{M}_{ I}   , \mathcal{L}_{\text{\tiny{dom}}} +  \mathcal{L}_o ] \right) , \mathcal{L}_{o}](((\CA^{i}_{\text{\tiny{low}}})^{k_i}  u^{a}_i )_{i \in J}) .
\end{equs}
Here, $ r = \ell = k_i = 0 $, so it remains $ m+ n+ p =1 $. As $ I $ and $ J \setminus I$ are singleton, there is only one non zero value given by $ m =1 $. In the end, one obtains
\begin{equs}
R_{o,A}^0((\tilde{\Pi}_{o,A}\lambda_0)(\hat{\mathbf{v}},\cdot)) = \mathcal{C}[ \mathcal{M}_{\lbrace 1,2 \rbrace},i \Delta](-iv^2,\bar{v}).
\end{equs}
With a similar computation, one gets
\begin{equs}
R_{o,A}^0((\tilde{\Pi}_{o,A}\lambda_1)(\hat{\mathbf{v}},\cdot)) = \mathcal{C}[ \mathcal{M}_{\lbrace 1,2 \rbrace},i \Delta](-i v,V).
\end{equs}
In both cases, one sees that these terms ask first order derivatives on $ v $ and $ V $. Indeed using Definition \ref{def:comm} of the commutator we have,
\begin{equs}\label{com-1}
\mathcal{C}[ \mathcal{M}_{\lbrace 1,2 \rbrace},i \Delta](w,z) = -2i\nabla w\cdot\nabla z.
\end{equs}

Next, for $ A = (H^2\cap H^1_0)(\Omega)^3 $, one has to use $ (i) $ from Lemma~\ref{Taylor_bound}, namely
\begin{equs}
R_{o,A}^r((\tilde{\Pi}_{o,A}\lambda_0)(\hat{\mathbf{v}},\cdot))  & = \sum_{\sum_i k_i + n = r-\ell + 1 }  \mathcal{L}_o^n \left( \prod_{i \in J} \mathcal{L}_i^{k_i} u_i^a \right)
\end{equs} 
Here, $ r =  \ell   = 0  $, there for one obtains three terms: 
\begin{equs}
R_{o,A}^0((\tilde{\Pi}_{o,A}\lambda_0)(\hat{\mathbf{v}},\cdot))  =  \Delta \left(  v^2\bar{v} \right) +   \Delta( v^2) \bar{v} -  v^2 \Delta \bar{v} 
\end{equs}
A similar computation shows
\begin{equs}
R_{o,A}^0((\tilde{\Pi}_{o,A}\lambda_1)(\hat{\mathbf{v}},\cdot))  =  \Delta \left(  v V\right) +   \Delta( v) V +  v \Delta V 
\end{equs}
which allows us to conclude.
\end{proof}
\begin{remark}[Error improvement]
Classical approximation techniques, such as splitting or exponential integrator methods (see, e.g., \cite{R1,R2,R3}) introduce a local error structure of type $\mathcal{O}(\Delta u(t), \Delta V)$ and hence requires the solution and potential $(u(t), V) \in D(\Delta)^2=(H^2\cap H^1_0)(\Omega)^2$ (see also \eqref{dom:Ddi}). The scheme \eqref{schemeGPclass} together with its local error coincides with a first order scheme obtained via exponential integrator methods. The local error of the low regularity GP integrator \eqref{schemeGP} on the other hand only requires the boundedness of first instead of second order spatial derivatives of the potential $V$ and solution~$u$.
\end{remark}

\subsubsection{Second order Duhamel integrator for Gross--Pitaevskii}
\label{sec::Gross_2}
We first recall from equation \eqref{2_order} in our first example, that by following the Taylor expansion \eqref{main_equation} and linearization \eqref{commutator} steps, we seek to provide a second order low-regularity approximation to the following iterated integrals:
\begin{equs}\label{2_order}
 w^{2} (\bf{v},\tau) &  =  e^{i\tau \Delta} v -i\int_{0}^{\tau} 
e^{i(\tau-\xi)\Delta}  \left((e^{i\xi \Delta} v^2)(e^{-i\xi\Delta}\bar{v}) + (e^{i\xi \Delta} v)V \right)d\xi \\ 
&  -i \int_{0}^{\tau} \xi
e^{i(\tau-\xi)\Delta}  \left( (e^{i\xi \Delta} \mathcal{C}[u^2,i\Delta](v))(e^{-i\xi\Delta}\bar{v}) \right) d\xi \\ 
& -\int_{0}^{\tau}
e^{i(\tau-\xi)\Delta} \left(\left( \int_{0}^{\xi} 
e^{i(\xi-\xi_1)\Delta}  \left((e^{i\xi_1 \Delta} v^2)(e^{-i\xi_1\Delta}\bar{v}) \right.\right.\right. \\
&\quad\left. \left. \left. + (e^{i\xi_1 \Delta} v)V\right)d\xi_1 \right) (2(e^{i\xi \Delta} v)(e^{-i\xi \Delta}\bar{v}) + V) \right) d\xi \\
& + \int_{0}^{\tau}
e^{i(\tau-\xi)\Delta} \left(\left( \int_{0}^{\xi} 
e^{-i(\xi-\xi_1)\Delta}  \left((e^{-i\xi_1 \Delta} \bar{v}^2)(e^{i\xi_1\Delta}v) \right.\right.\right. \\
&\left. \left.\left. \quad + (e^{-i\xi_1\Delta}\bar{v}) \bar{V}\right)d\xi_1 \right) (e^{i\xi \Delta} v^2) \right) d\xi.
\end{equs}

\begin{corollary}\label{cor:schemeGP2}
At second order our general low regularity scheme  \eqref{genscheme}  for the Gross-Pitaevskii equation \eqref{evGP} takes the form,
%
\begin{equs}\label{schemeGP2}
u^{n+1} 
=& \ e^{i \tau    \Delta} u^n -i \tau  \left( 
(e^{i \tau \Delta}(u^n)^2) (e^{i \tau \Delta} \varphi_1( -2 i\tau \Delta) \overline u^n) \right.\\
&\left. \quad +(e^{i \tau \Delta}u^n) (e^{i \tau \Delta}\varphi_1(- i \tau \Delta) V) \right) \\
&-i \tau^2 \CC[(e^{i\tau\Delta}\CM_{\{1\}})(e^{i\tau\Delta}(\varphi_1(-2i\tau\Delta) - \varphi_2(-2i\tau\Delta))\CM_{\{2\}}),i\Delta]((u^n)^2,\bar{u}^n) \\
&-i \tau^2 \CC[(e^{i\tau\Delta}\CM_{\{1\}})(e^{i\tau\Delta}(\varphi_1(-i\tau\Delta) - \varphi_2(-i\tau\Delta))\CM_{\{2\}}),i\Delta](u^n,V) \\
&-i\tau^2\left (e^{i\tau\Delta} \CC[u^2,i\Delta](u^n)\right)\left(e^{i\tau\Delta}\varphi_2(-2i\tau\Delta)\bar{u}^n\right) \\
&- \frac {\tau^2}{2}( u^n |u^n|^4 + 3 u^n |u^n|^2V - |u^n|^2u^n\bar{V} + u^n V^2)\\
=& \ \Phi_{\text{\tiny GP2}}^\tau(u^n),
\end{equs}
where the filter function $\varphi_2$ is defined as $\varphi_2(\sigma) = \frac{e^\sigma - \varphi_1(\sigma)}{\sigma}$.
The scheme \eqref{schemeGP2} is locally of order $\CO(\tau^3\Delta(u+V))$.
%
\end{corollary}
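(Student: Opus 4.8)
The plan is to specialize the general machinery (Definition~\ref{scheme}, Theorem~\ref{thm:genloc}, and the operator $\CK$ from Definition~\ref{def_CK}) to the Gross--Pitaevskii equation at order $r=1$, exactly parallel to the first-order computation in Corollary~\ref{cor_schemeGP1} but carrying one more layer of iterated integrals. First I would identify the set $\CT^2$ of approximated decorated trees with $n_+(T)\le 2$ and $\Pi\CI_o(T)\neq 0$: beyond the two one-edge trees $\CI_o(\lambda_0),\CI_o(\lambda_1)$ there are the one-edge trees with a monomial decoration $\CI_o(\lambda_0^1),\CI_o(\lambda_1^1)$ (these carry the factor $\xi$ and produce the $\varphi_2$ terms and the commutator $\CC[u^2,i\Delta]$ term), and the two-edge trees $\CI_o(\lambda_0\CI_o(\lambda_0))$, $\CI_o(\lambda_0\CI_o(\lambda_1))$, $\CI_o(\lambda_0\CI_{\bar o}(\lambda_0))$, $\CI_o(\lambda_0\CI_{\bar o}(\lambda_1))$ (these produce the last polynomial line $-\tfrac{\tau^2}{2}(\dots)$ after the dominant operators in the inner integral are Taylor-expanded). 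This matches precisely the five families of iterated integrals displayed in \eqref{2_order}.

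Next I would run $\Pi_A^1$ through each tree using \eqref{Pi_A}. For the one-edge trees the computation is the $r=1$ version of what was done in Example~\ref{ex_5} and in the proof of Corollary~\ref{cor_schemeGP1}: for $\CI_o(\lambda_0)$ one has $\CL_1=i\Delta$, $\CL_2=-i\Delta$, so $\CA_1=0$, $\CA_2=-2i\Delta$, $\CL_{\text{dom}}=-2i\Delta$, $I=\{2\}$, $\CA^i_{\text{low}}=0$; plugging into \eqref{K_reson} with $q\le r-\ell=1$ one gets the $q=0$ term (integrating $e^{i\tau\Delta}v^2 \cdot e^{(-2i\xi+i\tau)\Delta}\bar v$ exactly, which yields $-i\tau(e^{i\tau\Delta}v^2)(e^{i\tau\Delta}\varphi_1(-2i\tau\Delta)\bar v)$) plus the $q=1$ commutator term $m=1$, $n=p=0$, which after exact integration of $(\tau-\xi)e^{(-2i\xi+i\tau)\Delta}$ against the appropriate monomial produces the $\varphi_1-\varphi_2$ combination inside $\CC[\cdot,i\Delta]$. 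The $\CI_o(\lambda_1)$ tree is identical with $\CL_2=0$, $\CL_{\text{dom}}=-i\Delta$. For $\CI_o(\lambda_0^1)$ the extra $\xi$ shifts $\ell$ to $1$, leaving only $q=0$, which after integrating $\xi\, e^{(-2i\xi+i\tau)\Delta}$ gives the $\varphi_2(-2i\tau\Delta)$ term multiplying $e^{i\tau\Delta}\CC[u^2,i\Delta](u^n)$; here one uses that $\Upsilon^{\Psi_o}_{\text{root}}[\lambda_0^1]=-i(e^{\xi\CL}\CC[u^2,\CL](v))(e^{-\xi\CL}\bar v)$ as computed in Example~\ref{ex_Gross_cont}. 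For the two-edge trees, the inner planted tree is approximated at order $r-1-\ell=0$, i.e.\ to leading order $(\Pi_A\CI^0_{\bar o}(\cdot))(\hat{\mathbf v},\xi)\approx -i\xi\,(\text{monomial in }v,\bar v,V)$; since the whole two-edge contribution already carries $\tau^2$, one only needs its value at $\xi=0$-type leading behaviour, the operators $e^{\pm i\xi\Delta}$ are Taylor-expanded to order zero, and one is left with $\int_0^\tau\int_0^\xi(\text{polynomial})\,d\xi_1 d\xi = \tfrac{\tau^2}{2}(\text{polynomial})$. Summing the four two-edge trees with their symmetry factors $S_{\text{root}}$ and the coefficients $\Upsilon^{\Psi_o}_{\text{root}}$ (which here are the derivatives $f^{0,(1)}$ giving the factor $2|u^n|^2$ resp.\ $1$, and the conjugate equation contributions) reproduces exactly $-\tfrac{\tau^2}{2}(u^n|u^n|^4+3u^n|u^n|^2V-|u^n|^2u^n\bar V+u^nV^2)$; I would double-check the combinatorial coefficient $3$ by tracking which of $2|u^n|^2$, $V$ comes from which inner tree and the $\tfrac12$ from $S_{\text{root}}(\lambda_0^1\dots)$ and the Taylor remainder. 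Collecting all contributions into \eqref{genscheme} and identifying $e^{\tau\CL_o}v_o=e^{i\tau\Delta}u^n$, $v_{\bar o}=\bar u^n$ gives \eqref{schemeGP2}.

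For the local error I would invoke Theorem~\ref{thm:genloc} directly: the error is $\sum_{T\in\CT^2}\CO(\tau^3\CL^1_{\text{low}}(\CI_o(T),\hat{\mathbf v},A))$, and I would compute $\CL^1_{\text{low}}$ for each tree via Definition~\ref{def:Llow} and Lemma~\ref{Taylor_bound}, just as in the local error analysis at the end of the proof of Corollary~\ref{cor_schemeGP1}. Since here we work with $A=D(\CL_o)^3=(H^2\cap H^1_0)(\Omega)^3$ (the natural regularity for a second-order scheme, cf.\ Remark~\ref{rmq:domainA}), the operator $\CK$ uses the full Taylor expansion \eqref{def_fullTalor} for the one-edge trees once $r=\ell=1$ forces $\sum k_i+n\le 1$, so by case (i) of Lemma~\ref{Taylor_bound} the remainder $R^1_{o,A}$ is a sum of terms $\CL_o^n\CB(\prod\CL_i^{k_i}u_i)$ with $\sum k_i+n=2$, each of which is $\CO(\Delta(\cdot))$ applied to polynomials in $v,\bar v,V$; the two-edge trees contribute through $\CL^{0}_{\text{low}}$ of their inner planted trees plus an $R^1$ remainder, again $\CO(\Delta(\cdot))$. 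Collecting, the dominant term is $\CO(\tau^3\Delta(u+V))$, which is the claimed local error, and I would note (as in the remark following Corollary~\ref{cor_schemeGP1}) that this matches the regularity a classical second-order exponential integrator would require, the point being that the \emph{structure} of the error is via commutators so that with the sharper choice $A=(H^1_0)^3$ one could instead obtain $\CO(\tau^3|\nabla|(u+V))$-type bounds on the genuinely low-regularity parts.

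The main obstacle I anticipate is purely bookkeeping rather than conceptual: correctly enumerating $\CT^2$ (not missing trees, not double-counting), and getting every combinatorial factor right — the symmetry factors $S_{\text{root}}$ in \eqref{S_root}, the derivative coefficients $\Upsilon^{\Psi_o}_{\text{root}}$ for the cubic-plus-potential nonlinearity and its conjugate, the binomial factors from the products $\prod\CI_{o_i}(T_i)$, and the factors $\tfrac{1}{\prod k_i!},\tfrac{1}{n!m!p!}$ in \eqref{K_reson} — so that the exactly-integrated oscillatory pieces assemble into precisely the stated $\varphi_1,\varphi_2$ and $\varphi_1-\varphi_2$ combinations and the polynomial coefficients $1,3,-1,1$ and the prefactor $\tfrac12$. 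A secondary subtlety is making sure the operators appearing in \eqref{F} for each tree genuinely commute and generate semigroups (here they are all powers of $i\Delta$, so this is immediate), and that the Dirichlet boundary conditions encoded in $D(\CL)=(H^2\cap H^1_0)(\Omega)$ are respected at each stage, i.e.\ $A$ is chosen with the prescribed boundary data as stressed in Remark~\ref{rmq:domainA}.
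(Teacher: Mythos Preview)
Your overall strategy is exactly the paper's: set $r=1$, enumerate $\CT^2$, run $\Pi_A^1$ tree-by-tree with $A=(H^2\cap H^1_0)(\Omega)^3$, and read off the local error from Theorem~\ref{thm:genloc}. The handling of the one-edge trees $\lambda_0,\lambda_1$ via \eqref{K_reson} with $q\le 1$, of $\lambda_0^1$ via the $\ell=1$ shift, and of the two-edge trees by Taylor-expanding the inner integral to order zero is precisely what the paper does.

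However, your enumeration of $\CT^2$ is incomplete, and this is exactly the bookkeeping pitfall you flagged. You list only four two-edge trees, all with root decoration $\lambda_0$. The paper has six: you are missing
\[
T_5=\lambda_1\,\CI_o(\lambda_0)\quad\text{and}\quad T_6=\lambda_1\,\CI_o(\lambda_1),
\]
i.e.\ the trees with the \emph{potential} at the root and an inner $o$-edge. These arise because the derivative of the full nonlinearity $-i(u^2\bar u+uV)$ with respect to $u$ is $-i(2u\bar u+V)$, so the outer factor in the third block of \eqref{2_order} is $2(e^{i\xi\Delta}v)(e^{-i\xi\Delta}\bar v)+V$, not just $2(e^{i\xi\Delta}v)(e^{-i\xi\Delta}\bar v)$. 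Concretely, $T_5$ contributes $-\tfrac{\tau^2}{2}|v|^2vV$ (which combines with $T_4$'s $-\tau^2|v|^2vV$ to give the coefficient $3$), and $T_6$ is the sole source of the $-\tfrac{\tau^2}{2}vV^2$ term. With only your four trees the last line of \eqref{schemeGP2} cannot be reproduced. Conversely, the tree $\lambda_1^1$ you include is not in $\CT^2$: since $f^1_{o,o}(u)=-iu$ is linear, $\CC[-iu,i\Delta]=0$, so $\Upsilon^{\Psi_o}_{\text{root}}[\lambda_1^1]=0$ and $\Pi\CI_o(\lambda_1^1)=0$.

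A smaller point on the error analysis: for the one-edge trees $\lambda_0,\lambda_1$ one has $\ell=0$, $r=1$, so the full Taylor expansion \eqref{def_fullTalor} would need $A\subset D(\Delta^2)$, which fails. The paper therefore stays in case~(ii) of Lemma~\ref{Taylor_bound} for these trees, and the remainder is the second-order commutator $\CC^2[\CM_{\{1,2\}},i\Delta]$, not a sum of the form $\CL_o^n\prod\CL_i^{k_i}$. This still gives $\CO(\Delta(u+V))$, so your conclusion is right, but the mechanism you describe (``case (i) of Lemma~\ref{Taylor_bound} \dots\ each of which is $\CO(\Delta(\cdot))$'') is not the one actually used for $\lambda_0,\lambda_1$.
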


\begin{remark}
In case of more regular solution and potential the second order scheme~\eqref{schemeGP2} can be simplified, recovering for sufficiently smooth solutions and potential $(u(t),V) \in D(\Delta^2)^2$ classical schemes (see also Remark \ref{rem:ErrImp2} below).

\end{remark}
\begin{proof}
We choose $r=1$ in Definition \ref{scheme} in order to obtain a local error of order two. From equation \eqref{genscheme} it then follows that the second-order scheme is of the form,

\begin{equs}\label{GP2}
u^{1}( \hat{\mathbf{v}},\tau) = e^{ i\tau\Delta }v + \sum_{T \in \CT^{2}} \Pi_A^1 \left( \CI_{o}(T) \right)(\hat{\mathbf{v}},\tau),
\end{equs}
where $ \hat{\mathbf{v}} = (\bf{v},\bf{V})$ and where are interested in the decorated trees $ \mathcal{I}_{o}(T) $ where $ T $ belongs to $ \CT^2 $ defined by,
\begin{equs}
\CT^2 & =  \lbrace  T_0, ..., T_8  \, \rbrace, \quad T_0 = \lambda_0 =  \begin{tikzpicture}[scale=0.2,baseline=-5]
\coordinate (root) at (0,-0.5);
\node[xi] (rootnode) at (root) {};
\end{tikzpicture}\, , \quad T_1 = \lambda_1 = \begin{tikzpicture}[scale=0.2,baseline=-5]
\coordinate (root) at (0,-0.5);
\node[xi,blue] (rootnode) at (root) {};
\end{tikzpicture}\, , \quad T_2 = \lambda_0^1 =  \begin{tikzpicture}[scale=0.2,baseline=-5]
\coordinate (root) at (0,-0.5);
\node[xix] (rootnode) at (root) {};
\end{tikzpicture}\, , \quad
\\T_3 & = \lambda_0 \mathcal{I}_{o}(\lambda_0) = \begin{tikzpicture}[scale=0.2,baseline=-5]
\coordinate (root) at (0,-1);
\coordinate (tri) at (0,1);
\draw[kernels2] (root) -- (tri);
\node[xi] (rootnode) at (root) {};
\node[xi] (trinode) at (tri) {};
\end{tikzpicture}\, , \quad T_4 = \lambda_0 \mathcal{I}_{o}(\lambda_1) = \begin{tikzpicture}[scale=0.2,baseline=-5]
\coordinate (root) at (0,-1);
\coordinate (tri) at (0,1);
\draw[kernels2] (root) -- (tri);
\node[xi] (rootnode) at (root) {};
\node[xi,blue] (trinode) at (tri) {};
\end{tikzpicture}\, , \quad 
T_5 = \lambda_1 \mathcal{I}_{o}(\lambda_0) = \begin{tikzpicture}[scale=0.2,baseline=-5]
\coordinate (root) at (0,-1);
\coordinate (tri) at (0,1);
\draw[kernels2] (root) -- (tri);
\node[xi,blue] (rootnode) at (root) {};
\node[xi] (trinode) at (tri) {};
\end{tikzpicture} , \\
T_6 &= \lambda_1 \mathcal{I}_{o}(\lambda_1) = \begin{tikzpicture}[scale=0.2,baseline=-5]
\coordinate (root) at (0,-1);
\coordinate (tri) at (0,1);
\draw[kernels2] (root) -- (tri);
\node[xi,blue] (rootnode) at (root) {};
\node[xi,blue] (trinode) at (tri) {};
\end{tikzpicture}\, , \quad
T_7 = \lambda_0 \mathcal{I}_{\overline{o}}(\lambda_0) = \begin{tikzpicture}[scale=0.2,baseline=-5]
\coordinate (root) at (0,-1);
\coordinate (tri) at (0,1);
\draw[kernels2,tinydots] (root) -- (tri);
\node[xi] (rootnode) at (root) {};
\node[xi] (trinode) at (tri) {};
\end{tikzpicture}\, , \quad
T_8  = \lambda_0 \mathcal{I}_{\overline{o}}(\lambda_1) = \begin{tikzpicture}[scale=0.2,baseline=-5]
\coordinate (root) at (0,-1);
\coordinate (tri) at (0,1);
\draw[kernels2,tinydots] (root) -- (tri);
\node[xi] (rootnode) at (root) {};
\node[xi,blue] (trinode) at (tri) {};
\end{tikzpicture}.
\end{equs}
As stated previously, the approximation of the above integrals, and hence the  structure of the scheme, will depend on the regularity assumptions of the initial data $v$ and the potential $V$. Here, we show the construction of a second order scheme which requires $H^2$-regularity on $v$ and $V$. Hence, by taking into account the boundary conditions, we fix $A = (H^2(\Omega)\cap H^1_0(\Omega))^3$ and construct a second order scheme for $(v, \bar{v}, V) \in A $.
\\
1. Computation of $\Pi_A^1 \left( \CI_{o}(\lambda_0) \right)(\hat{\mathbf{v}},\tau)$.\\
We recall that $\CI_{o}(\lambda_0)$ encodes the first integral \eqref{int_NLS1}.
Next, we make the following remark; given our regularity assumptions $(v,\bar v, V) \in A$ , we have that $v, V \not\in D(\CL_{o}^2) \subset H^4(\Omega)$. Hence, our approximation cannot consist of merely applying Taylor-expansions \eqref{def_fullTalor} of all the operators, since it would require $H^4$ regularity on the initial data and on the potential, together with the according boundary conditions. Indeed, given that $r=1$ and $\ell = 0$ we have that $$\cup_{n + \sum_{i\in J}k_i \le 2} D(G_{n,(k_i)_{i\in J}}) \subseteq (H^4(\Omega))^3$$ which implies that we do not have that $A \subseteq D(G_{n,(k_i)_{i\in J}})$, for all $n + \sum_{i\in J}k_i \le 2$. In order to make an approximation of order two of the integrals \eqref{int-1} while only requiring $H^2(\Omega)$ regularity on the inital data we will apply the first point $ (i) $ of Definition~\ref{def_CK}, as done in Example \ref{ex_5}.
Indeed, following the results established in Example \ref{ex_5} we have:
\begin{equs}
\left( \Pi_A \CI^1_{o}(\lambda_{0}) \right) (\hat{\mathbf{v}},\xi) &= -i\int_{0}^{\tau} \left( e^{i\tau \Delta} v^2 \right) \left( e^{ i(\tau- 2 \xi) \Delta }  \overline{v} \right) \,  d \xi \\ & +  \int_{0}^{\tau} (\tau-\xi) \mathcal{C}[ \left( e^{i\tau \Delta} \mathcal{M}_{\lbrace 1 \rbrace} \right) \left( e^{i(\tau-2\xi) \Delta}  \mathcal{M}_{ \lbrace 2 \rbrace} \right), i\Delta](-iv^2,\overline{v}) \,  d \xi \\
& = -i[\tau(e^{i\tau\Delta }v^2)(e^{i\tau\Delta}\varphi_1(-2i\tau\Delta)\bar{v}) \\
&+ \tau^2 \CC[(e^{i\tau\Delta}\CM_{\{1\}})(e^{i\tau\Delta}(\varphi_1(-2i\tau\Delta) \\
&\quad- \varphi_2(-2i\tau\Delta))\CM_{\{2\}}),i\Delta](v^2,\bar{v})]
\end{equs}

\noindent 2. Computation of $\Pi_A^1 \left( \CI_{o}(\lambda_1) \right)(\hat{\mathbf{v}},\tau)$.\\
We recall that $\CI_{o}(\lambda_1)$ encodes the second integral \eqref{int_GP1}. Using the same arguments as in the above together with the computations of $\Pi_A^0 \left( \CI_{o}(\lambda_1) \right)(\hat{\mathbf{v}},\tau)$ made in the proof of Corollary \ref{cor_schemeGP1}, it follows that,
\begin{equs}
\left( \Pi_A \CI^1_{o}(\lambda_1) \right) (\hat{\mathbf{v}},\xi) 
 &= -i\left(\tau(e^{i\tau\Delta }v)(e^{i\tau\Delta}\varphi_1(-i\tau\Delta)V) \right. \\
&\left. + \tau^2 \CC[(e^{i\tau\Delta}\CM_{\{1\}})(e^{i\tau\Delta}(\varphi_1(-i\tau\Delta) \right. \\ & \left. - \varphi_2(-i\tau\Delta))\CM_{\{2\}}),i\Delta](v,V)\right).
\end{equs} \\
3. Computation of $\Pi_A^1 \left( \CI_{o}(\lambda_0^1) \right)(\hat{\mathbf{v}},\tau)$.\\
We have that 
\begin{equs} 
\CI_{o}(\lambda_{0}^1)  & = \begin{tikzpicture}[scale=0.2,baseline=-5]
\coordinate (root) at (0,-1);
\coordinate (tri) at (0,1);
\draw[kernels2] (root) -- (tri);
\node[not] (rootnode) at (root) {};
\node[xix] (trinode) at (tri) {};
\end{tikzpicture}
\end{equs}
 encodes the iterated integral
\begin{equs}
-i \int_{0}^{\tau} \xi
e^{i(\tau-\xi)\Delta}  \left( (e^{i\xi \Delta} \mathcal{C}[u^2,i\Delta](v))(e^{-i\xi\Delta}\bar{v}) \right) d\xi
\end{equs}
Using the computation made in example \eqref{ex_Gross_cont} we have,
\begin{equs}
(\Pi^1_{A} \CI_{o}(\lambda_0^1))(\hat{\mathbf{v}},\tau) &= \mathcal{K}_{o,A}^1 ( (\tilde{\Pi}_{o,A} \CD_{0}(\lambda_0^1))(\hat{\mathbf{v}},\cdot))(\tau) \\
&= \mathcal{K}_{o,A}^1\left(\frac{\Upsilon_{\text{\scriptsize{root}}}^{\Psi_{o}}[\lambda_0^1]}{S_{\text{\scriptsize{root}}}(\lambda_0^1)}(\mathbf{v},\xi) V_{0}\xi \right)(\tau)\\
&= \mathcal{K}_{o,A}^1\left(\xi \left( e^{ \xi \mathcal{L}}\mathcal{C}[ u^2, \Delta](v) \right)\left( e^{- \xi \mathcal{L}} \bar{v}\right)\right)(\tau).
\end{equs}
To apply Definition \ref{def_CK} we let $u_1 = \CC[u^2,\Delta](v)$, and $u_2 = \bar{v}$. The operator interactions are the same as those computed previously.
We are in the case where $r=1=\ell$, hence $q=0=m=n=p.$ From Definition \ref{def_CK} part i., it then follows that,
\begin{equs}
\left( \Pi^0_{A} \CI_{o}(\lambda_0^1)\right)(\hat{\mathbf{v}},\tau) & = \int_0^\tau \xi [(e^{i\tau\Delta}\CM_{\{1\}})(e^{-2i\xi\Delta +i\tau\Delta}\CM_{\{2\}})](u_1,u_2) d \xi\\
& = -i\tau^2\left (e^{i\tau\Delta} \CC[u^2,i\Delta](v)\right)\left(e^{i\tau\Delta}\varphi_2(-2i\tau\Delta)\bar{v}\right)
\end{equs}

\noindent 4. Computation of $\Pi_A^1 \left( \CI_{o}(T_3) \right)(\hat{\mathbf{v}},\tau)$.\\
We have that 
\begin{equs} 
\CI_{o}(T_3) & = \begin{tikzpicture}[scale=0.2,baseline=-5]
\coordinate (root) at (0,-1);
\coordinate (tri) at (0,1);
\coordinate (tri1) at (0,3);
\draw[kernels2] (root) -- (tri);
\draw[kernels2] (tri) -- (tri1);
\node[not] (rootnode) at (root) {};
\node[xi] (trinode1) at (tri) {};
\node[xi] (trinode) at (tri1) {};
\end{tikzpicture}
\end{equs}
encodes the iterated integral
\begin{equs}
-\int_{0}^{\tau}
e^{i(\tau-\xi)\Delta} \left(\left( \int_{0}^{\xi} 
e^{i(\xi-\xi_1)\Delta}  \left((e^{i\xi_1 \Delta} v^2)(e^{-i\xi_1\Delta}\bar{v})\right)d\xi_1 \right) 2(e^{i\xi \Delta} v)(e^{-i\xi \Delta}\bar{v}) \right) d\xi, 
\end{equs}
By definition of the projection operator $\CD_r$ and \eqref{def_CK} we have,
\begin{equs}\label{T_3-1}
\Pi_A^1 \left( \CI_{o}(T_3) \right)(\hat{\mathbf{v}},\tau) 
&= \mathcal{K}_{o,A}^1 ( \tilde{\Pi}_{o,A} \CD_{0}(\lambda_0 \CI_{o}(\lambda_0))(\hat{\mathbf{v}},\cdot))(\tau) \\
&= \mathcal{K}_{o,A}^1 ( \tilde{\Pi}_{o,A}(\lambda_0 \CI_{o}^0(\lambda_0)))(\hat{\bf{v}}, \tau)\\
&= \mathcal{K}_{o,A}^1\left(\frac{\Upsilon_{\text{\scriptsize{root}}}^{\Psi_{o}}[T_3]}{S_{\text{\scriptsize{root}}}(T_3)}(\mathbf{v},\xi) V_{0} (\Pi_{A}\CI_{o}^0(\lambda_0))(\hat{\bf{v}}, \xi) \right)(\tau)
\end{equs}
where $V_0 = 1$.
%
Hence, in order to compute the above approximation we are left to compute $S_{\text{\scriptsize{root}}}(T_3)$, $\Upsilon_{\text{\scriptsize{root}}}^{\Psi_{o}}[T_3]$, and $(\Pi_{A}\CI_{o}^0(\lambda_0))(\hat{\bf{v}}, \xi)$.  First, by definition \eqref{S_root} of $S_{\text{\scriptsize{root}}}(T_3)$ we have that $i=1=j$ and $\beta_{1,1}  = 1 $, where $o_1 = o$.
Hence, we have that $S_{\text{\scriptsize{root}}}(T_3) = 1$. Second, by definition \eqref{Upsilon_root} of $\Upsilon_{\text{\scriptsize{root}}}^{\Psi_{o}}[T]$ we have that,
\begin{equs}
\Upsilon_{\text{\scriptsize{root}}}^{\Psi_{o}}[T_3](\mathbf{v},\xi) & = (D_{o}\hat{\Psi}_{o}^0)(\mathbf{v}, \xi) \\ & = D_{o} \left( (e^{i\xi\Delta}(-iv_o^2))(e^{-i\xi\Delta} v_{\bar{o}}) \right) \\& = -2i(e^{i\xi\Delta}v)(e^{-i\xi}\bar{v})
\end{equs}
since $v_o = v$, and $v_{\bar{o}}=\bar{v}$. Finally, following the computations made at the beginning of example \ref{ex_5} we have,
\begin{equs}
\left( \Pi_A \CI^0_{o}(\lambda_{0}) \right) (\hat{\mathbf{v}},\xi)  & = \mathcal{K}_{o,A}^0 ( F(\hat{\mathbf{v}},\cdot))(\xi)
\end{equs}
where
\begin{equs} 
F(\mathbf{v},\xi) =  \prod_{i \in \lbrace 1,2 \rbrace} e^{\xi \mathcal{L}_i} u_i,\quad &u_1 = -iv^2, \quad u_2 = \bar{v}, \quad \mathcal{L}_1 = i \Delta, \quad \mathcal{L}_2 = -i \Delta 
\end{equs}
In our current setting we have that $r=0=\ell$ and hence $D(\CL_{o}^{r-\ell +1}) = D(\Delta) $. Given the regularity assumptions on $v$, namely that $v \in H^2\cap H^1_0$, it follows that $v \in D(\Delta)$. Following \eqref{def_fullTalor} we can then Taylor expand all the operators appearing in the integral which yields the following approximation,
\begin{equs}
\left( \Pi_A \CI^0_{o}(\lambda_{0}) \right) (\hat{\mathbf{v}},\xi)  
 = \mathcal{K}_{o,A}^0 (F(\hat{\mathbf{v}},\cdot))(\xi) = \int_0^\xi \prod_{i \in \lbrace 1,2 \rbrace} u_i d\xi  = -i\xi v^2\bar{v}.
\end{equs}
Hence, plugging the above computations in \eqref{T_3-1} yields,

\begin{equs}\label{T_3-2}
\Pi_A^1 \left( \CI_{o}(T_3) \right)(\mathbf{v},\tau) 
&= -2\mathcal{K}_{o,A}^1\left( \xi(e^{i\xi\Delta}v)(e^{-i\xi\Delta}\bar{v})v^2{\bar v} \right)(\tau)\qquad\qquad\qquad \\
&= -2\mathcal{K}_{o,A}^1\left( {\tilde F}(\hat{\mathbf{v}},\xi)  \right)(\tau)
\end{equs}

where 
\begin{equs}\label{F^1}
{\tilde F}(\hat{\mathbf{v}},\xi) = \xi^{\ell} \prod_{i \in \lbrace 1,2,3 \rbrace} e^{\xi\CL_i}u_i, \quad\quad
& \CL_1 = i\Delta, \ \CL_2 = -i\Delta, \ \CL_3=  0, \quad \\
& u_1 = v  \ u_2 = \bar{v} , \ u_3 = v^2 \bar{v} \ \text{and} \ \ell = 1.
\end{equs}
In our current setting we have that $r=1 = \ell$ and hence $D(\CL_{o}^{r-\ell +1}) = D(\Delta) $. Given that $v \in D(\Delta)$, by following \eqref{def_fullTalor} we can once again Taylor expand the remaining operators inside the integral which yields the following approximation,

\begin{equs}
\mathcal{K}_{o,A}^1\left( {\tilde F}(\hat{\mathbf{v}},\xi)  \right)(\tau) &= \int_0^\tau \xi \prod_{i \in \lbrace 1,2,3 \rbrace} u_i d\xi  = \frac {\tau^2}{2} v |v|^4,
\end{equs}
by definition of the $(u_i)_{i=1,\dots,4}$. Hence, from \eqref{T_3-2} it follows that,
\begin{equs}
\Pi_A^1 \left( \CI_{o}(T_3) \right)(\hat{\mathbf{v}},\tau) 
&=-\tau^2 v|v|^4.
\end{equs}

\bigskip

\noindent 5. Computation of $\ \Pi_A^1 \left( \CI_{o}(T_4) \right)(\hat{\mathbf{v}},\tau), \ \Pi_A^1 \left( \CI_{o}(T_5) \right)(\hat{\mathbf{v}},\tau),$ and $\ \Pi_A^1 \left( \CI_{o}(T_6) \right)(\hat{\mathbf{v}},\tau)$.\\
We have that 
\begin{equs} 
\CI_{o}(T_4) = \begin{tikzpicture}[scale=0.2,baseline=-5]
\coordinate (root) at (0,-1);
\coordinate (tri) at (0,1);
\coordinate (tri1) at (0,3);
\draw[kernels2] (root) -- (tri);
\draw[kernels2] (tri) -- (tri1);
\node[not] (rootnode) at (root) {};
\node[xi] (trinode1) at (tri) {};
\node[xi,blue] (trinode) at (tri1) {};
\end{tikzpicture},\quad
\CI_{o}(T_5) = \begin{tikzpicture}[scale=0.2,baseline=-5]
\coordinate (root) at (0,-1);
\coordinate (tri) at (0,1);
\coordinate (tri1) at (0,3);
\draw[kernels2] (root) -- (tri);
\draw[kernels2] (tri) -- (tri1);
\node[not] (rootnode) at (root) {};
\node[xi,blue] (trinode1) at (tri) {};
\node[xi] (trinode) at (tri1) {};
\end{tikzpicture},\quad
\CI_{o}(T_5) = \begin{tikzpicture}[scale=0.2,baseline=-5]
\coordinate (root) at (0,-1);
\coordinate (tri) at (0,1);
\coordinate (tri1) at (0,3);
\draw[kernels2] (root) -- (tri);
\draw[kernels2] (tri) -- (tri1);
\node[not] (rootnode) at (root) {};
\node[xi,blue] (trinode1) at (tri) {};
\node[xi,blue] (trinode) at (tri1) {};
\end{tikzpicture}.
\end{equs}
respectively encode 
\begin{equs}
&-\int_{0}^{\tau}
e^{i(\tau-\xi)\Delta} \left(\left( \int_{0}^{\xi} 
e^{i(\xi-\xi_1)\Delta}  \left((e^{i\xi_1 \Delta} v)V\right)d\xi_1 \right) 2(e^{i\xi \Delta} v)(e^{-i\xi \Delta}\bar{v}) \right) d\xi,\\ 
&-\int_{0}^{\tau}
e^{i(\tau-\xi)\Delta} \left(\left( \int_{0}^{\xi} 
e^{i(\xi-\xi_1)\Delta}  \left((e^{i\xi_1 \Delta} v^2)(e^{-i\xi_1\Delta}\bar{v})\right)d\xi_1 \right)V \right) d\xi, \\
&-\int_{0}^{\tau}
e^{i(\tau-\xi)\Delta} \left(\left( \int_{0}^{\xi} 
e^{i(\xi-\xi_1)\Delta}  \left((e^{i\xi_1 \Delta} v)V\right)d\xi_1 \right) V \right) d\xi, 
\end{equs}
namely the remaining integrals in the third line of \eqref{2_order}. These computation follow exactly the same lines as for the computation of $\Pi_A^1 \left( \CI_{o}(T_3)\right)$. Namely, given our regularity assumptions we can apply the Taylor based expansion \eqref{def_fullTalor}, to obtain,
\begin{equs}
\left(\Pi_A^1 \left( \CI_{o}(T_4) \right) + \Pi_A^1 \left( \CI_{o}(T_5) \right)\right)(\hat{\mathbf{v}},\tau) 
&= -2\int_0^\tau \xi \prod_{i \in \lbrace 1,2,3,4 \rbrace} u_i^a d\xi -\int_0^\tau \xi \prod_{i \in \lbrace 1,2,3 \rbrace} u_i^b\\
&= -\tau^2 |v|^2vV + -\frac{\tau^2}{2} Vv^2\bar{v},\\
&=-\frac{3\tau^2}{2}|v|^2vV,
\end{equs}
and
\begin{equs}
\Pi_A^1 \left( \CI_{o}(T_6) \right)(\hat{\mathbf{v}},\tau) 
&= -\int_0^\tau \xi \prod_{i \in \lbrace 1,2,3 \rbrace} u_i^c =-\frac{\tau^2}{2}V^2v,
\end{equs}
where $(u_1^a,u_2^a, u_3^a,u_4^a) = (v, V, v, \bar{v})$, $(u_1^b, u_2^b, u_3^b,)= (V, v^2, \bar{v}),$ and $(u_1^c, u_2^c, u_3^c = (V, v, V)$.

\noindent 6. Computation of $\Pi_A^1 \left( \CI_{o}(T_7) \right)(\hat{\mathbf{v}},\tau)$.\\
We have that, 
\begin{equs} 
\CI_{o}(T_7) = \begin{tikzpicture}[scale=0.2,baseline=-5]
\coordinate (root) at (0,-1);
\coordinate (tri) at (0,1);
\coordinate (tri1) at (0,3);
\draw[kernels2] (root) -- (tri);
\draw[kernels2,tinydots] (tri) -- (tri1);
\node[not] (rootnode) at (root) {};
\node[xi] (trinode1) at (tri) {};
\node[xi] (trinode) at (tri1) {};
\end{tikzpicture}
\end{equs}
encodes the iterated integral
\begin{equs}
\int_{0}^{\tau}
e^{i(\tau-\xi)\Delta} \left(\left( \int_{0}^{\xi} 
e^{-i(\xi-\xi_1)\Delta}  \left((e^{-i\xi_1 \Delta} \bar{v}^2)(e^{i\xi_1\Delta}v)\right)d\xi_1 \right) (e^{i\xi \Delta} v^2) \right) d\xi.
\end{equs}
Similar to the computations done in \eqref{T_3-1} it follows that,
\begin{equs}\label{T7-1}
\left(  \Pi_A^1 \CI_{o}(T_7) \right)(\hat{\mathbf{v}},\tau) 
&= \mathcal{K}_{o,A}^1 ( \tilde{\Pi}_{o,A} \CD_{0}(\lambda_0 \CI_{\bar{o}}(\lambda_0))(\hat{\mathbf{v}},\cdot))(\tau) \\
&= \mathcal{K}_{o,A}^1 ( \tilde{\Pi}_{o,A}(\lambda_0 \CI_{\bar{o}}^0(\lambda_0))(\hat{\mathbf{v}},\cdot))( \tau)\\
&= \mathcal{K}_{o,A}^1\left(\frac{\Upsilon_{\text{\scriptsize{root}}}^{\Psi_{o}}[T_7]}{S_{\text{\scriptsize{root}}}(T_7)}(\mathbf{v},\cdot)(\Pi_{A}\CI_{\bar{o}}^0(\lambda_0))(\hat{\bf{v}}, \cdot) \right)(\tau).
\end{equs}
We are left to calculate $\Upsilon_{\text{\scriptsize{root}}}^{\Psi_{o}}[T_7]$, $S_{\text{\scriptsize{root}}}(T_7)$, and $(\Pi_{A}\CI_{\bar{o}}^0(\lambda_0)))(\hat{\bf{v}}, \xi)$.
 First, by definition \eqref{S_root} of $S_{\text{\scriptsize{root}}}(T_7)$ we have that $i=1=j$ and $\beta_{1,1}  = 1 $, where $o_1 = \bar{o}$.
Hence, we have that $S_{\text{\scriptsize{root}}}(T_7) = 1$. Secondly, by definition \eqref{Upsilon_root} of $\Upsilon_{\text{\scriptsize{root}}}^{\Psi_{o}}[T]$ we have that,
\begin{equs}
\Upsilon_{\text{\scriptsize{root}}}^{\Psi_{o}}[T_7] & = (D_{\bar{o}}\hat{\Psi}_{o}^0)(\bf{v}, \xi) \\
&= -iD_{\bar{o}} \left( (e^{i\xi\Delta}(v_{o}^2))(e^{-i\xi\Delta} {v}_{\bar{o}}) \right)\\
& = -i(e^{i\xi\Delta}v^2)
\end{equs}
since $v_{o} = v$, and $v_{\bar{o}}=\bar{v}$. Thirdly, by following definition \eqref{def_CK} we have,
\begin{equs}
(\Pi_{A}\CI_{\bar{o}}^0 \lambda_0)(\hat{\bf{v}}, \xi) &= 
 \CK_{\bar{o},A}^0((\tilde{\Pi}_{\bar{o},A}\lambda_0)(\hat{\bf{v}},\cdot))(\xi)= \CK_{\bar{o},A}^0\left( \frac{\Upsilon_{\text{\scriptsize{root}}}^{\Psi_{\bar{o}}}[\lambda_0]}{S_{\text{\scriptsize{root}}}(\lambda_0)}(\hat{\mathbf{v}},\cdot) \right)(\xi)
\end{equs}
where by equation \eqref{S_root} we have that $S_{\text{\scriptsize{root}}}(\lambda_0) = 1$ and by equation \eqref{Upsilon_root} it follows that,
\begin{equs}
\Upsilon_{\text{\scriptsize{root}}}^{\Psi_{\bar{o}}}[\lambda_0](\bf{v},\zeta) &= \prod_{\mathcal{o} \in \Lab_{+}^{0,\bar{o}}} e^{\zeta \CL_{\mathcal{o}}}f^{0}_{\bar{o},\mathcal{o}}(v_{\mathcal{o}})=(e^{i\zeta\Delta}f^0_{\bar{o},o}(v))(e^{-i\zeta\Delta}f^0_{\bar{o},\bar{o}}(\bar{v}))\\
&=(e^{i\zeta\Delta} v)(e^{-i\zeta\Delta}(i\bar{v}^2)).
\end{equs}
Collecting the above computations, we have,
\begin{equs}
\left(\Pi_{A}\CI_{\bar{o}}^0(\lambda_0)\right)(\hat{\bf{v}}, \xi)  & = \mathcal{K}_{\bar o,A}^0 ( F (\hat{\mathbf{v}}, \cdot))(\xi)
\end{equs}
where
\begin{equs} 
F(\hat{\mathbf{v}},\xi) =  \prod_{i \in \lbrace 1,2 \rbrace} e^{\xi \mathcal{L}_i} u_i,\quad &u_1 = v, \quad u_2 = i\bar{v}^2, \quad \CL_{1} = i\Delta,\ \CL_{2} = -i\Delta,
\end{equs}
and where we consider the operator $\CL_{\bar o} = -i\Delta$ associated to the decoration $\bar o \in \Lab_{+}$.
Given that $v \in D(\Delta)$, by following \eqref{def_fullTalor} we can once again Taylor expand the operators inside the integral which yields the following approximation,
\begin{equs}
\left(\Pi_{A}\CI_{\bar{o}}^0(\lambda_0)\right)(\hat{\bf{v}}, \xi)  
& = \mathcal{K}_{\bar o,A}^0 ( F ( \hat{\mathbf{v}}, \cdot))(\xi)  = \int_0^\xi \prod_{i \in \lbrace 1,2 \rbrace} u_i d\xi = i\xi v\bar{v}^2.
\end{equs}
Hence, by collecting the above computations it follows from \eqref{T7-1} that,
\begin{equs}
\left( \Pi_A^1 \CI_{o}(T_7) \right)(\hat{\mathbf{v}},\tau) 
&=\mathcal{K}_{o,A}^1\left(\xi(e^{i\xi\Delta}v^2) v\bar{v}^2\right)(\tau) = \mathcal{K}_{o,A}^1\left( {\tilde F}(\hat{\mathbf{v}},\cdot)  \right)(\tau),
\end{equs}
where 
\begin{equs}
{\tilde F}(\hat{\mathbf{v}},\xi) = \xi^{\ell} \prod_{i \in \lbrace 1,2,3 \rbrace} e^{\xi\CL_i}u_i, \quad\quad
& \CL_1 = i\Delta, \ \CL_2= \CL_3 = 0, \quad \\
& u_1 = v^2, \ u_2 = v, \ u_3 = \bar{v}^2 \ \text{and} \ \ell = 1.
\end{equs}
Given that $v \in D(\Delta)$, by following \eqref{def_fullTalor} we can once again Taylor expand the remaining operators inside the integral which yields the following approximation,
\begin{equs}
\left( \Pi_A^1  \CI_{o}(T_7) \right)(\hat{\mathbf{v}},\tau) &=
\mathcal{K}_{o,A}^1\left( {\tilde F}(\hat{\mathbf{v}},\xi)  \right)(\tau) = \int_0^\tau \xi \prod_{i \in \lbrace 1,2,3 \rbrace} u_i d\xi  = \frac {\tau^2}{2} v |v|^4,
\end{equs}

\noindent 7. Computation of $\Pi_A^1 \left( \CI_{o}(T_8) \right)(\hat{\mathbf{v}},\tau)$.\\
We have that, 
\begin{equs} 
\CI_{o}(T_8) = \begin{tikzpicture}[scale=0.2,baseline=-5]
\coordinate (root) at (0,-1);
\coordinate (tri) at (0,1);
\coordinate (tri1) at (0,3);
\draw[kernels2] (root) -- (tri);
\draw[kernels2,tinydots] (tri) -- (tri1);
\node[not] (rootnode) at (root) {};
\node[xi] (trinode1) at (tri) {};
\node[xi,blue] (trinode) at (tri1) {};
\end{tikzpicture}
\end{equs}
encodes the iterated integral
\begin{equs}
\int_{0}^{\tau}
e^{i(\tau-\xi)\Delta} \left(\left( \int_{0}^{\xi} 
e^{-i(\xi-\xi_1)\Delta}  \left((e^{-i\xi_1 \Delta} {\bar v}) \bar{V}\right)d\xi_1 \right) (e^{i\xi \Delta} v^2) \right) d\xi.
\end{equs}
By using the same arguments as made in the above computation of the iterated integral $\CI_{o}(T_7)$, namely by following the Taylor expansion \eqref{def_fullTalor}, we obtain  
\begin{equs}
\Pi_A^1 \left( \CI_{o}(T_8) \right)(\hat{\mathbf{v}},\tau) 
&= \int_0^\tau \xi \prod_{i \in \lbrace 1,2,3 \rbrace} u_i d\xi = \frac{\tau^2}{2} v|v|^2\bar{V},
\end{equs}
where $u_1 = \bar{v}$, $u_2 =\bar{V}$, and $u_3 = v^2$. 
%
Collecting the above computation together with equation \eqref{GP2} yields the second order low-regularity scheme given in \eqref{schemeGP2}.
\\
\\
\noindent \textbf{Local error analysis}. The error for the trees $ \CI^{1}_{o}(\lambda_0) $ and $ \CI^{1}_{o}(\lambda_1) $ follows the same computations as for the first order. One has to go one step further in the Taylor approximation:
\begin{equs}
R_{o,A}^1((\tilde{\Pi}_{o,A}\lambda_0)(\hat{\mathbf{v}},\cdot)) & = \mathcal{C}^{2}[ \mathcal{M}_{\lbrace 1,2 \rbrace},i \Delta](-iv^2,\bar{v})
\\ R_{o,A}^1((\tilde{\Pi}_{o,A}\lambda_1)(\hat{\mathbf{v}},\cdot)) & = \mathcal{C}^2[ \mathcal{M}_{\lbrace 1,2 \rbrace},i \Delta](-i v,V).
\end{equs}
In the end, we obtain the following contribution to the local error. This produce the following second order commutators: 
\begin{equs}
\mathcal{L}^{1}_{\text{\tiny{low}}}(\CI_{o}( \lambda_0 ),\hat{\mathbf{v}}, A) & = - i v^2 \bar{v} + \mathcal{C}^{2}[ \mathcal{M}_{\lbrace 1,2 \rbrace},i \Delta](-iv^2,\bar{v}) \\ 
\mathcal{L}^{1}_{\text{\tiny{low}}}(\CI_{o}( \lambda_1 ),\hat{\mathbf{v}}, A) &  =- iv V + \mathcal{C}^2[ \mathcal{M}_{\lbrace 1,2 \rbrace},i \Delta](-i v,V),
\end{equs}
where using the Leibniz rule it follows from Definition \ref{def:comm} that these second order commutators ask for two spacial derivatives on $ v $ and $ V $. 

Next, the decorated tree $\CI_{o}(\lambda_{0}^1)$ contains a polynomial decoration meaning that one has $ \xi $ inside the integral. Therefore, applying Definition~\ref{def:Llow}, one gets
\begin{equs}
\mathcal{L}^{1}_{\text{\tiny{low}}}(\CI_{o}( \lambda_0^1 ),\hat{\mathbf{v}}, A) = \mathcal{L}^{0,o}_{\text{\tiny{low}}}(  \lambda_0^1, \hat{\mathbf{v}},A) +     R^r_{o,A}\left(\left(\tilde{\Pi}_{o,A}  \lambda^1_0 \right)(\hat{\mathbf{v}},\cdot)  \right),
\end{equs}
where 
\begin{equs}
\mathcal{L}^{r,o}_{\text{\tiny{low}}}(\lambda^{1}_{0},A)  &  =  \frac{\Upsilon_{\text{\scriptsize{root}}}^{\Psi_o}[\lambda_0^1]}{S_{\text{\scriptsize{root}}}(\lambda_0^1)}(\mathbf{v},0) V_{0} = \mathcal{C}[u^2,i\Delta](-iv) \bar{v} \\
 R^1_{o,A}\left(\left(\tilde{\Pi}_{o,A}  \lambda^1_0 \right)(\hat{\mathbf{v}},\cdot)  \right) & = \mathcal{C}[\mathcal{C}[\mathcal{M}_{\lbrace 1,2 \rbrace},\Delta](\mathcal{M}_{\lbrace 1 \rbrace}, \mathcal{M}_{\lbrace 1 \rbrace}) \mathcal{M}_{\lbrace 2 \rbrace}, i \Delta] (-iv,\bar{v}).   
\end{equs}
It remains to compute the local error for trees with two edges such as $ T_3 = \CI_o(\lambda_0 \mathcal{I}_{o}(\lambda_0))$. We proceed with a full derivation for this decorated trees. Two edges mean that we have to go for one extra level of recursivity. One has
\begin{equs} \label{express_1}
\mathcal{L}^{1}_{\text{\tiny{low}}}(T_3,\hat{\mathbf{v}}, A) = \mathcal{L}^{0,o}_{\text{\tiny{low}}}(  \lambda_0 \mathcal{I}_{o}(\lambda_0), \hat{\mathbf{v}},A) +     R^1_{o,A}\left(\left(\tilde{\Pi}_{o,A}^{0}  \lambda_0 \mathcal{I}_{o}(\lambda_0) \right)(\hat{\mathbf{v}},\cdot)  \right).\quad
\end{equs}
Then, for the first term, one gets
\begin{equs}
\mathcal{L}^{0,o}_{\text{\tiny{low}}}(  \lambda_0 \mathcal{I}_{o}(\lambda_0), \hat{\mathbf{v}},A) = \frac{\Upsilon_{\text{\scriptsize{root}}}^{\Psi_o}[\lambda_0\mathcal{I}_{o}(\lambda_0)]}{S_{\text{\scriptsize{root}}}(\lambda_0 \CI_{o}(\lambda_0))}(\mathbf{v},0)\, V_{0} \, \mathcal{L}^{0}_{\text{\tiny{low}}}(\CI_o(\lambda_0),\hat{\mathbf{v}}, A).
\end{equs}
Given that $ A = (H^2\cap H^1_0)(\Omega)^3 $, one has from the local error analysis of the first order scheme:
\begin{equs}
\mathcal{L}^{0}_{\text{\tiny{low}}}(\CI_o(\lambda_0),\hat{\mathbf{v}}, A) = - i v^2 \bar{v} + \Delta \left(  v^2\bar{v} \right) +   \Delta( v^2) \bar{v} -  v^2 \Delta \bar{v}. 
\end{equs} 
Therefore, one gets
\begin{equs}
\mathcal{L}^{0,o}_{\text{\tiny{low}}}(  \lambda_0 \mathcal{I}_{o}(\lambda_0), \hat{\mathbf{v}},A) = -  v^3 \bar{v}^2 - i v \bar{v} \Delta \left(  v^2\bar{v} \right) - i v  \Delta( v^2) \bar{v}^2 + i v^3 \bar{v} \Delta \bar{v}.
\end{equs}
For the second term of \eqref{express_1}, we Taylor-expand the operators that give:
\begin{equs}
R^1_{o,A}\left(\left(\tilde{\Pi}_{o,A}^{0}  \lambda_0 \mathcal{I}_{o}(\lambda_0) \right)(\hat{\mathbf{v}},\cdot)  \right) = - i \Delta \left( v^3   \bar{v}^2 \right) - i (\Delta v)   v^2 \bar{v}^2 + i (\Delta \bar{v}) v^3 \bar v.
\end{equs}
In fact for the approximation of $ \CI_o(\lambda_0\CI_o(\lambda_0)) $, one uses the classical exponential integrators by Taylor-expanding all the operators. We do not ask too much regularity due to the size of the trees which involves two integrations in time.
For the other trees $ \CI_o(T_i) $, $ i \in \lbrace 4,5,6,7,8 \rbrace $, one can proceed with the same computations and get an error of the same order.
\end{proof}

\begin{remark}[Error improvement]\label{rem:ErrImp2}
 
Note that classical approximation techniques, such as splitting or exponential integrator methods (see, e.g., \cite{R1,R2,R3})  introduce a local error of type $\mathcal{O}(\Delta^2 v, \Delta^2 V)$ and hence require solution and potential in
\begin{equs}
D(\Delta^2) =\left \{f \in (H^2\cap H^1_0)(\Omega)\,;\, \Delta f \in (H^2\cap H^1_0)(\Omega)\right\}.
\end{equs}
The local error of the  second order low regularity GP integrator \eqref{schemeGP2} on the other hand only requires the boundedness of second instead of fourth order spatial derivatives of the potential $V$ and solution~$u$, as well as only the classical boundary conditions that the trace vanishes, i.e., $H^1_0(\Omega)$.
\end{remark}

Note that the second order low regularity scheme~\eqref{schemeGP2} involves  commutators of the form $\mathcal{C}[f,i\Delta](u^n,\overline{u^n},V)$ which in general involve spatial derivatives. Indeed, using Definition~\ref{def:comm}, one has
\begin{equs} \label{com}
& \mathcal{C}[f,i\Delta](v,\bar{v},V) =  - i \Delta (f(v,\bar{v},V)) +  D_1 f(v,\bar{v},V) \cdot i \Delta v \\ &  +  D_2 f(v,\bar{v},V) \cdot i \Delta \bar{v} + D_3 f(v,\bar{v},V) \cdot i \Delta V.
\end{equs}
For practical computations we need to address the stability issues caused by the inclusion of these commutator terms in the numerical schemes. Different approaches can be made to treat this issue and guarantee the stability of the scheme. One way to overcome the stability issue lies in imposing a CFL  condition on the fully discrete scheme,  introducing a restriction on the ratio between time step size $\tau$ and spatial step size $\Delta x$.   In this paper we will not impose any CFL type condition, but stabilize the scheme a posteriori by the use of  properly chosen filter operators $\Psi$. More precisely, instead of considering the second order low regularity integrator~\eqref{schemeGP2} for the discretisation of the GP equation, we consider its stabilised counterpart
\begin{equs}\label{schemeGP2:stab}
& u^{n+1} 
 = e^{i \tau    \Delta} u^n -i \tau  \left( 
(e^{i \tau \Delta}(u^n)^2) (e^{i \tau \Delta} \varphi_1( -2 i\tau \Delta) \overline u^n) \right.\\
&\left. \quad +(e^{i \tau \Delta}u^n) (e^{i \tau \Delta}\varphi_1(- i \tau \Delta) V) \right) \\
&-i \tau^2\Psi  \CC[(e^{i\tau\Delta}\CM_{\{1\}})(e^{i\tau\Delta}(\varphi_1(-2i\tau\Delta) - \varphi_2(-2i\tau\Delta))\CM_{\{2\}}),i\Delta]((u^n)^2,\bar{u}^n) \\
&-i \tau^2\Psi \CC[(e^{i\tau\Delta}\CM_{\{1\}})(e^{i\tau\Delta}(\varphi_1(-i\tau\Delta) - \varphi_2(-i\tau\Delta))\CM_{\{2\}}),i\Delta](u^n,V) \\
&-i\tau^2\left (e^{i\tau\Delta} \Psi\CC[u^2,i\Delta](u^n)\right)\left(e^{i\tau\Delta}\varphi_2(-2i\tau\Delta)\bar{u}^n\right) \\
&- \frac {\tau^2}{2}( u^n |u^n|^4 + 3 u^n |u^n|^2V - |u^n|^2u^n\bar{V} + u^n V^2)\\
&= \ \Phi_{\text{\tiny GP2STAB}}^\tau(u^n),
\end{equs}
where we have inserted the filter operator $\Psi$ in front of all commutator terms. This filter function allows us the stabilization of the scheme. In the following section we briefly discuss the appropriate choice of filter function (which allows us to stabilize the scheme without worsening the local error structure) and  outline an example. For a general introduction to filter functions in case of ODEs we refer to \cite{HLW}.\medskip

\subsection{Filter functions, commutators and stabilisation}
\label{stabilisater}

To illustrate the appropriate choice for the filter operator $\Psi$ we place ourselves in the following general framework:
to construct a stable scheme at order $p = r+1$, we wish to stabilize a term of the form 
$$\tau^p \mathcal{C}^{p-1}[f,i\Delta](v,\bar{v}, V).$$
In case of the second order scheme for \eqref{evGP}, the commutator terms are of first order and hence involve first order derivatives (with according boundary conditions), see \eqref{com}, \eqref{com-1} and the Remark \ref{rem:domi}. This motivates the following choice we make on the filter operator:  we denote the filter operator by
$$\Psi = \psi(i\tau |\nabla|),$$
where
$$
 |\nabla| := \left( - \Delta \right)^{1/2}
$$
is self-adjoint on the domain $D(|\nabla|) = H^1_0(\Omega) $ and $\psi$ denotes a suitably chosen filter function 
which allows us to 
\begin{itemize}
\item[(a)]
stabilize the scheme such that the numerical flow is locally Lipschitz continuous (see Assumption \ref{assump:filter} below), while still
\item[(b)] allowing approximations at low regularity (see Assumption \ref{assump:filter2} below). 
\end{itemize}
Here it is important to note that the main purpose of our new framework lies in reducing the regularity assumptions. Hence, we do not want to introduce a ``bad'' (classical) error term through the stabilization, otherwise we could use directly classical Taylor series expansion techniques in the construction of our schemes. Thus, we require the filter function $\psi$ to introduce the same optimal local error $O(\tau^{p+1} |\nabla|^p(v+V))$ as is introduced by our low-regularity schemes (see Section~\ref{subsec:num-erranal}).

%

The two necessary assumptions on the filter operator $\Psi = \psi(i\tau |\nabla|)$ at arbitrary order $p$ reads as follows:

\begin{assumption}\label{assump:filter} 
Let us denote by $X$ the space endowed with norm $\Vert \cdot \Vert$ in which we carry out the error analysis. Then the filter function $\Psi$ should satisfy the following estimate
$$
\Vert \tau^{p-1} \Psi [\mathcal{C}^{p-1}[f,i\Delta](v,\bar{v}, V)]\Vert \le C ||v||^\sigma
$$
for some $\sigma= \sigma(f,p) > 0$.
\end{assumption}
 Assumption \ref{assump:filter} guarantees the stability of the scheme, such that (for the appropriate choice of $X$) we have local Lipschitz continuity of the numerical flow $ \Phi^\tau$.


\begin{assumption}\label{assump:filter2}   The filter operator $\Psi = \psi(i\tau |\nabla|)$ shall satisfy the following low regularity expansion
\begin{equation}\label{A2}
\Psi[\mathcal{C}^{p-1}[f,i\Delta](v,\bar{v}, V)] = \mathcal{C}^{p-1}[f,i\Delta](v,\bar{v}, V) + \mathcal{O}(\tau |\nabla|^p (v+V)).
\end{equation}
\end{assumption}
Assumption \ref{assump:filter2} preserves the optimal local error structure  $O(\tau^{p+1} |\nabla|^p(v+V))$ of our schemes. This is essential as the inclusion of the filter function should not require more regularity than we assumed in the construction of our low regularity framework.

An example of a possible choice of a filter function in order to stabilize the second order scheme \eqref{schemeGP2} is given in the following example.
\begin{example}[Filter functions for the GP equation]
A possible choice of filter function for the second order low regularity integrator \eqref{schemeGP2:stab} is given by
\begin{equation}\label{ffp2}
\Psi =\psi(i\tau |\nabla|) = \varphi_1(i\tau |\nabla|) = (i\tau \vert \nabla\vert)^{-1}(e^{i \tau |\nabla|}-1),
\end{equation}

where we recall that $|\nabla| = \left( - \Delta \right)^{1/2}$ is a self-adjoint operator on the $D(|\nabla|) = H^1_0(\Omega)$.
\end{example}

For a higher $p$-th order method we will encounter higher order commutators of the form $$\tau^p \mathcal{C}^{p-1}[f,i\Delta](u^n, \overline{u^n}, V)$$
in our schemes. In order to stabilize the latter we need to introduce a higher order  filter operator $\Psi_p$. For this purpose, we first note that using the Leibnitz formula, the commutator term $\mathcal{C}^{p-1}[f,i\Delta](v, \bar{v},V)$ can be written as a combination of terms involving at most $p-1$ derivatives of $v$ and $V$. 

As an example, in the case $p=2$, $\mathcal{C}[f,i\Delta](v,\bar{v}, V)$ has the explicit form \eqref{com}. Hence the role of $\Psi_p$ lies in stabilizing terms which involve the differential operator $|\nabla|^{p-1}$. 
This motivates the higher order filter operator
\begin{equation} \label{filterfcnP}
\Psi_p = \underbrace {\varphi_1(i\tau |\nabla|)... \varphi_1(i\tau |\nabla|)}_\text {$p-1$ times}.
\end{equation}

Note that at first glance another  (cheaper) choice of high order filter function would be $\tilde{\Psi}(i\tau |\nabla|) :=  \varphi_1(i\tau |\nabla|^{p-1})$. The latter, however, introduces a local error of order $\mathcal{O}(\tau|\nabla|^{2(p-1)})$ which at order $p \ge 3$ is not optimal in the sense of our low regularity approximations. Namely, this (cheaper) choice of filter function does not satisfy Assumption \ref{assump:filter2}.

\subsection{Klein-- and Sine--Gordon equations}\label{sec:sine}

As a second example we consider a problem with real non-polynomial nonlinearity: the nonlinear Klein--Gordon (KG) equation
\begin{equation}\label{kgrO}
\begin{aligned}
& \partial_{tt} z - \Delta z  + m^2 z =  g(z), \quad \quad (t,x) \in \R \times  \Omega, 
\\
& z(0) = u_{0}, \quad \partial_{t} z(0) =  u_{1}, 
\end{aligned}
\end{equation}
where for simplicity we assume non zero mass $m \neq 0$ and real-valued solutions $z(t,x)\in \R$. Nevertheless, our framework can also allow for the complex setting $z(t,x)\in \C$ and for the case $m= 0$ for wave equations. 

First, in order to apply our abstract framework \eqref{ev} to the nonlinear Klein-Gordon model \eqref{kgrO} we rewrite \eqref{kgrO} as a first-order complex system. For this purpose we define 
\[
 \cnab = \sqrt{-\Delta +m^2}
 \]
 and assume that our domain $\Omega$ (equipped with appropriate boundary conditions) is chosen such that $ \cnab $ is well defined and invertible. For instance in the case of periodic boundary conditions the operator $ \cnab $ can be expressed as the Fourier multiplier
 $$
  \left( \cnab\right)_{k\in \mathbb{Z}^d}  = \sqrt{(k_1+\ldots+k_d)^2 +m^2},
 $$
 and is a well defined invertible operator on the space $H^1(\T^d)$, given that $m \neq 0$.
While on a sufficiently smooth bounded domain $\Omega \subset \R^d$ equipped with homogeneous Dirichlet boundary conditions, using pseudo-spectral methods we have that the operator
$
\cnab: (H^2\cap H^1_0)(\Omega) \rightarrow L^2(\Omega)
$
is essentially self-adjoint; its closure is self-adjoint on the domain $$D(\cnab) = H^1_0(\Omega).$$
See \cite{ReedSimon} for a full discussion on the study of unbounded operators, spectral theorems for self-adjoint operators (pg. 263) and of the notion of closable operators (pg. 250).
Next we introduce the transformation 
  \begin{equation}
 \label{uz}
u= z - i \cnab^{-1} \partial_t z,
\end{equation}
which allows one to rewrite the Klein-Gordon equation \eqref{kgrO} as the following first order complex system
\begin{equation}\label{kgroo}
i \partial_t u = - \cnab u +  \cnab^{-1}g\big((\textstyle \frac12 (u+ \overline u)\big),
\end{equation}
and where by \eqref{uz} we have that $$z = \frac12 (u+\overline u) = \mathrm{Re}(u).$$ 

A natural space to study wave type equations of the form \eqref{kgrO} is for $(z,\partial_t z) \in H^1 \times L^2.$ In terms of $u$, from the transformation \eqref{uz}, we see that a natural space to measure the error in when studying equation \eqref{kgroo} is $H^1$.

In order to illustrate our general theory to the case of a non-polynomial nonlinearity,  we choose to study the Sine--Gordon equation. The Sine--Gordon equation corresponds to the following choice of nonlinearity 
$$g(z)= -  \sin z,$$
which by equation \eqref{kgroo} yields the first-order complex Sine--Gordon equation:
\begin{equation}\label{S-Geqn}
\partial_t u - i\cnab u =i  \cnab^{-1}\big( \sin(\textstyle \frac12 u) \cos(\textstyle \frac12 \bar{u}) + \cos(\textstyle \frac12 u)\sin(\textstyle \frac12 \bar{u}) \big).
\end{equation}

We now apply our general framework to the above Sine--Gordon equation. The above equation \eqref{S-Geqn} is of the form \eqref{ev} with
\begin{align*}
  \Lab_- & = \lbrace  0,1 \rbrace , \quad  \Lab_+ = \Lab_+^{o,0} = \Lab_+^{o,1} = \Lab_+^{\bar o,0} = \Lab_+^{\bar o,1}= \lbrace o, \bar o \rbrace,  \quad 
 V_0 =V_1 =1 \\ 
 \mathcal{L}_o &  =  i \cnab, \quad \mathcal{L}_{\bar o}  = - \mathcal{L}_{o}, \quad u_o = v, \quad u_{\bar o} = \bar v \\
 f^{0}_{o, o}(u) &  =  \sin(\textstyle \frac12 u) , \quad  f^{0}_{o, \bar o}( \bar u) =  \cos(\frac{1}{2}\bar u),  \quad f^{1}_{o, o}(u)   = \cos(\textstyle\frac{1}{2}u) , \quad  f^{1}_{o, \bar o}( \bar u) =  \sin(\frac{1}{2} \bar u), \\
 f^{0}_{\bar o, o}(u) &  =  \cos(\textstyle\frac{1}{2} u) , \quad  f^{0}_{\bar o, \bar o}( u) =  \sin(\textstyle\frac{1}{2}\bar u),  \quad f^{1}_{\bar o, o}(u)   = \sin(\textstyle\frac{1}{2} u) , \quad  f^{1}_{\bar o, \bar o}( \bar u) =  \cos(\frac{1}{2}\bar u).
\end{align*}
Using the above we have that for $ (\ell,\mathfrak{l}) \in \lbrace o, \bar o \rbrace \times \lbrace 0,1\rbrace $ the nonlinearity is given by
\begin{equs} \label{diff_Psi}
\Psi_{\ell}^{\mathfrak{l}}(\mathbf{u}_{\ell}^{\mathfrak{l}}) = i \cnab^{-1} \left(  f_{\ell,o}^{\mathfrak{l}}(u) f_{\ell, \bar o}^{\mathfrak{l}}(\bar u) \right).
\end{equs}
Before introducing the first order analysis for \eqref{S-Geqn}, we make the following comparative remark. The decorated trees which will be introduced in the following section for the first order analysis of \eqref{S-Geqn} will resemble the ones introduced for the analysis of the Gross--Pitaevskii equation (see Section \ref{sec::Gross_1}). The main difference is that the nodes will encode different nonlinearities, given that the nonlinearities associated to $ V_0 $ and $ V_1 $ for the Sine-Gordon equation are different from the ones for the Gross-Pitaevskii equation. In addition, the Sine-Gordon nonlinearity \eqref{diff_Psi} includes the linear operator $\CB^{0}_{o} =  \CB^{1}_{o}= i \cnab^{-1}  $. See Section \ref{sec::Gross_1} where for the study of the Gross-Pitaevskii equation we had the more simple case where $\CB^{0}_{o} =  \CB^{1}_{o}= \id$.
%
%
\subsubsection*{First order Duhamel integrator for Sine--Gordon}
\label{sec::Sine} 
We seek to provide a first order low-regularity approximation to the following oscillatory integral:
\begin{equs}
& w^{0}_o(\hat{{\bf v}},\tau)  =  e^{i\tau \cnab} v +i \cnab^{-1}\int_{0}^{\tau} 
e^{i(\tau-\xi)\cnab}\\
&  \left(  \right. ( e^{i\xi \cnab}\sin(\textstyle \frac12 v))(e^{-i\xi\cnab}\cos(\textstyle \frac12 \bar v)) \left.+ (e^{i\xi \cnab} \cos(\textstyle \frac12 v))(e^{-i\xi\cnab}\sin(\textstyle \frac12 \bar v))\right)d\xi.
\end{equs}
The first order low regularity integrator for the Sine--Gordon equation is given in the following corollary.
\begin{corollary}\label{cor:kg1}
At first order our general low regularity scheme  \eqref{genscheme}  for the Sine-Gordon equation  \eqref{S-Geqn}   takes the form
\begin{equs}\label{schemeSG1}
u^{n+1}  & =  e^{i \tau    \cnab} u^n +  i \tau     \cnab^{-1}   \left[ \right.
\\ &  \left( e^{i \tau \cnab} \sin(\textstyle \frac12 u^n) \right) \left(  e^{i \tau \cnab}
 \left( \varphi_1\big( -2 i\tau \cnab \big)   \cos(\textstyle  \frac12 \bar u^n) \right) \right) \\ & \left. +  \left( e^{i \tau \cnab} \cos(\textstyle  \frac12 u^n) \right) \left( e^{i \tau \cnab} \left( \varphi_1\big( -2 i\tau \cnab \big)  \sin(\textstyle  \frac12 \bar u^n)  \right) \right) \right]
\end{equs}
with local error 
$$\CO\left(\tau^2 \mathcal{C}[\cnab^{-1} \mathcal{M}_{\lbrace 1,2 \rbrace},i \cnab](u, \bar{u})\right)$$
In  case of more regular solutions, 
we have the following simplified first-order scheme,
\begin{equs}\label{schemeSG2}
u^{n+1}  & =  e^{i \tau    \cnab} u^n +  i \tau \cnab^{-1}   \Big(  \sin(\textstyle \frac12 u^n)  
\cos(\textstyle  \frac12 \bar u^n) \Big. \\ 
&\Big.+  \cos(\textstyle  \frac12 u^n)  \sin(\textstyle  \frac12 \bar u^n) \Big)
\end{equs}
with a local error of order $ \mathcal{O}\Big(\tau^2 \cnab^{-1}\left( (\cnab u)\bar{u}\right)\Big)$.
\end{corollary}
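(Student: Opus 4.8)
The plan is to follow the proof of Corollary~\ref{cor_schemeGP1} almost verbatim, specialising the general scheme \eqref{genscheme} to the Sine--Gordon data $\mathcal L_o=i\cnab$, $\mathcal L_{\bar o}=-i\cnab$, $\CB^0_o=\CB^1_o=i\cnab^{-1}$, $V_0=V_1=1$ and the nonlinearities $f^{\mathfrak l}_{o,\cdot}$ listed above, with $r=0$. First I would observe that, exactly as for Gross--Pitaevskii, the decorated trees of size one are $\CT^1=\{\lambda_0,\lambda_1\}$, so that
\begin{equs}
u^{0}_{o,A}(\hat{\mathbf{v}},\tau)=e^{i\tau\cnab}v+\Pi^{0}_{A}\big(\CI_o(\lambda_0)\big)(\hat{\mathbf{v}},\tau)+\Pi^{0}_{A}\big(\CI_o(\lambda_1)\big)(\hat{\mathbf{v}},\tau).
\end{equs}
Using \eqref{recursive_Pi_1}, \eqref{Upsilon_root} and \eqref{S_root} one reads off $S_{\text{\scriptsize{root}}}(\lambda_0)=S_{\text{\scriptsize{root}}}(\lambda_1)=1$, $\Upsilon^{\Psi_o}_{\text{\scriptsize{root}}}[\lambda_0](\mathbf{v},\xi)=(e^{i\xi\cnab}\sin(\tfrac12 v))(e^{-i\xi\cnab}\cos(\tfrac12\bar v))$ and hence $(\tilde\Pi_{o,A}\CD_{-1}(\lambda_0))(\hat{\mathbf{v}},\xi)=i\cnab^{-1}\big((e^{i\xi\cnab}\sin(\tfrac12 v))(e^{-i\xi\cnab}\cos(\tfrac12\bar v))\big)$, with the analogous expression for $\lambda_1$ obtained by interchanging $\sin$ and $\cos$.

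Next I would run Definition~\ref{def_CK} on these two functions $F$. As in Example~\ref{ex_5} the operator interactions are $\mathcal L_1=i\cnab$, $\mathcal L_2=-i\cnab$, so $\CA_1=0$, $\CA_2=-2i\cnab$, $\mathcal L_{\text{\tiny{dom}}}=-2i\cnab\neq0$, $I=\{2\}$ and $\CA^1_{\text{\tiny{low}}}=\CA^2_{\text{\tiny{low}}}=0$; since $\CB^{\mathfrak l}_o=i\cnab^{-1}$ is a function of $\cnab$ it commutes with $\mathcal L_o$, and all operators appearing in \eqref{F} commute and generate strongly continuous (indeed unitary) semigroups, so the first point~(i) of Definition~\ref{def_CK}, that is formula \eqref{K_reson}, applies. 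With $r=\ell=0$ the only surviving index in \eqref{K_reson} is $q=n=m=p=0$, so $\mathcal K^{0}_{o,A}(F)$ is simply the exact integral $\int_0^\tau(e^{i\tau\cnab}\CM_{\{1\}})(e^{-2i\xi\cnab+i\tau\cnab}\CM_{\{2\}})(\cdot)\,d\xi$ of the dominant oscillation, evaluated in closed form using $\varphi_1(\sigma)=(e^{\sigma}-1)/\sigma$; substituting the two contributions into the displayed identity produces the scheme \eqref{schemeSG1}. For the simplified scheme \eqref{schemeSG2} one works instead under the stronger regularity hypothesis on $\hat{\mathbf{v}}$ (cf.\ Remark~\ref{rmq:domainA}): then the domains $D(G_{n,(k_i)_{i\in J}})$ all contain $A$, so one uses the full Taylor expansion \eqref{def_fullTalor} with $r=\ell=0$, which integrates the constant term and yields $i\tau\cnab^{-1}\big(\sin(\tfrac12 u)\cos(\tfrac12\bar u)+\cos(\tfrac12 u)\sin(\tfrac12\bar u)\big)$.

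For the local error I would invoke Theorem~\ref{approxima_tree} together with Definition~\ref{def:Llow}. For $T\in\{\lambda_0,\lambda_1\}$ one has $\mathcal L^{0}_{\text{\tiny{low}}}(\CI_o(T),\hat{\mathbf{v}},A)=\mathcal L^{-1,o}_{\text{\tiny{low}}}(T,\hat{\mathbf{v}},A)+R^{0}_{o,A}\big((\tilde\Pi_{o,A}T)(\hat{\mathbf{v}},\cdot)\big)$, where by \eqref{local_error_1} the first summand is $\CB^{\mathfrak l}_o\big(\Upsilon^{\Psi_o}_{\text{\scriptsize{root}}}[T](\mathbf{v},0)V_{\mathfrak l}/S_{\text{\scriptsize{root}}}(T)\big)=i\cnab^{-1}(\sin(\tfrac12 v)\cos(\tfrac12\bar v))$ (resp.\ the $\sin\leftrightarrow\cos$ version), which costs no derivatives, while by Lemma~\ref{Taylor_bound}(ii) with $r=\ell=k_i=0$ and both $I$ and $J\setminus I$ singletons, every summand with $n$, $p$ or $k_i$ nonzero vanishes and only $m=1$ survives, giving $R^{0}_{o,A}=\CB^{\mathfrak l}_o\,\mathcal C[\CM_{\{1,2\}},i\cnab](\cdot,\cdot)=\mathcal C[\cnab^{-1}\CM_{\{1,2\}},i\cnab](\cdot,\cdot)$ at the relevant arguments; this is the announced $\mathcal O\big(\tau^2\,\mathcal C[\cnab^{-1}\CM_{\{1,2\}},i\cnab](u,\bar u)\big)$. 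Expanding the commutator via Definition~\ref{def:comm} and the Leibniz rule rewrites it as $\cnab^{-1}$ applied to the product-rule defect of the order-one operator $\cnab$, whose leading derivatives cancel; together with the smoothing of $\cnab^{-1}$ this makes the leading error ask strictly less regularity than the term $\cnab^{-1}((\cnab u)\bar u)$ that governs \eqref{schemeSG2}, which is genuinely of order one in $u$ and is precisely what Lemma~\ref{Taylor_bound}(i) returns in the simplified case, since $\mathcal L_o^{\,n}\CB(\prod_i\mathcal L_i^{k_i}u_i)$ with $n+k_1+k_2=1$ is of that form. The main obstacle, and the only genuine novelty beyond the Gross--Pitaevskii computation, is the bookkeeping of the new operator $\CB^{\mathfrak l}_o=i\cnab^{-1}$: one must check that it commutes with $\mathcal L_o$ so that Definition~\ref{def_CK} is applicable and $\CB^{\mathfrak l}_o$ may be carried through each $\mathcal C^k[\cdot,\cdot]$ bracket (cf.\ Remark~\ref{general_form}), and---for the regularity count---that, being of order $-1$, it strictly lowers the derivative demand of the leading error; modulo this, the argument is the routine substitution already carried out for Gross--Pitaevskii.
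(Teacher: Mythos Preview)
Your proposal is correct and follows essentially the same route as the paper's proof: the same two trees $\lambda_0,\lambda_1$, the same computation of $S_{\text{\scriptsize{root}}}$ and $\Upsilon_{\text{\scriptsize{root}}}^{\Psi_o}$, the same operator interactions $\CA_1=0$, $\CA_2=-2i\cnab$, $\mathcal L_{\text{\tiny dom}}=-2i\cnab$, and the same split into the resonance case \eqref{K_reson} versus the full Taylor case \eqref{def_fullTalor}, with the local error read off from Lemma~\ref{Taylor_bound}(ii) and (i) respectively. Your explicit remark that $\CB^{\mathfrak l}_o=i\cnab^{-1}$ commutes with $\mathcal L_o$ (so that Definition~\ref{def_CK} applies and $\CB$ may be pulled through the commutator) is exactly the one extra check needed beyond the Gross--Pitaevskii computation, and the paper handles it in the same way.
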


\begin{remark}[Remark on the imposed regularity]
In general the local error of type
$$
\CO\left(\tau^2 \mathcal{C}[\cnab^{-1} \mathcal{M}_{\lbrace 1,2 \rbrace},i \cnab]( u , \overline{u})\right)
$$
asks for less regularity assumptions than the local error $ \mathcal{O}\Big(\tau^2 \cnab^{-1}\left( (\cnab u)\bar{u}\right)\Big)$, due to the structure of the commutator (see Definition \ref{def:comm}). Indeed, we portray this in the following example, where we measure the error in the space $H^1(\T^d)$. 

First, using the commutator estimate established in \cite[Lemma 27]{FS}, we have
\begin{equs}
||\mathcal{C}[\cnab^{-1} \mathcal{M}_{\lbrace 1,2 \rbrace},i \cnab]( u , \overline{u})||_{H^1} 
& \lesssim ||\mathcal{C}[ \mathcal{M}_{\lbrace 1,2 \rbrace},i \cnab]( u , \overline{u})||_{L^2} \\
& \lesssim ||u ||^2_{H^{\frac{1}{2} + \frac{d}{4}}}.
\end{equs}
Namely we have that the commutator term asks for solutions in $H^{\frac{1}{2} + \frac{d}{4}}(\T^d)$.

Secondly, using the classical bilinear estimate: 
$$ ||uv||_{L^2(\Omega)} \lesssim ||u||_{L^2(\Omega)} ||v||_{H^{\frac{d}{2} + \epsilon}(\Omega)}, \ \epsilon > 0,$$
which follows by the Sobolev injection $H^{\frac{d}{2} + \epsilon} \hookrightarrow L^\infty$, we have,
\begin{equs}
||\cnab^{-1} ((\cnab u)\bar{u})||_{H^1(\Omega)} &\lesssim ||(\cnab u)\bar{u} ||_{L^2(\Omega)} \\
&\lesssim ||\cnab u ||_{L^2(\Omega)}||u||_{H^{\frac{d}{2} + \epsilon}(\Omega)}.
\end{equs}
It follows from the above that the {\it classical} local error term asks for solutions in $(H^1\cap H^{\frac{d}{2} + \epsilon})(\T^d)$ when working on the torus, while when working on a bounded domain equipped with Dirichlet boundary conditions it asks for solutions in $(H^1_0\cap H^{\frac{d}{2} + \epsilon})(\Omega)$. Hence, the commutator-term asks for less regularity than the classical local error term. For example, when $d=1$, the low-regularity scheme asks for $H^{\frac{3}{4}}(\T)$-solutions, whereas the classical scheme asks for $u(t) \in D(\cnab ) = H^1(\T)$. 


\end{remark}

\begin{proof}

The trees we are considering are $\CT^1 =  \lbrace  T_0, T_1 \rbrace$, where following the framework given in \eqref{nodeDef} we have that,


\begin{equs}
\CB_{o} \Bigg(\prod_{\mathcal{o} \in \Lab_+^{0,o}} e^{\xi \mathcal{L}_{\mathcal{o}}} f^{0}_{o,\mathcal{o}}(v_{\mathcal{o}}) \Bigg) V_0 
&= i\cnab^{-1}\Bigg(\left( e^{i\xi \cnab}u_1 \right) \, \left( e^{-i\xi \cnab} u_2 \right)\Bigg) \\
& \equiv 
 \begin{tikzpicture}[scale=0.2,baseline=-5]
\coordinate (root) at (0,-0.5);
\node[xi] (rootnode) at (root) {};
\end{tikzpicture} = \lambda_0 = T_0,
\\
\CB_{o} \Bigg(\prod_{\mathcal{o} \in \Lab_+^{1,o}} e^{\xi \mathcal{L}_{\mathcal{o}}} f^{1}_{o,\mathcal{o}}(v_{\mathcal{o}}) \Bigg) V_1 
&= i\cnab^{-1}\Bigg(\left( e^{i\xi \cnab}u_3 \right) \, \left( e^{-i\xi \cnab} u_4 \right)\Bigg)  \\&\equiv 
 \begin{tikzpicture}[scale=0.2,baseline=-5]
\coordinate (root) at (0,-0.5);
\node[xi,blue] (rootnode) at (root) {};
\end{tikzpicture} = \lambda_1 = T_1,
\end{equs}
where $\mathbf{v} = (v,\bar{v})$ and $u_1 = \sin(\textstyle \frac12 v)$, $u_2 = \cos(\textstyle \frac 12 \overline{v})$, $u_3 = \cos(\textstyle \frac12 v)$, $u_4 = \sin(\textstyle \frac 12 \overline{v})$.
From equation \eqref{genscheme} it then follows that the first-order scheme is of the form,
\begin{equs}\label{SG1}
u^{0}( \hat{\mathbf{v}},\tau) = e^{ i\tau\Delta }v + \sum_{T \in \CT^{1}} \Pi_A^0 \left( \CI_{o}(T) \right)(\hat{\mathbf{v}},\tau),
\end{equs}
where $ \mathbf{v} = (v,\bar{v})$. We have, for $i=0,1$, that
\begin{equs}
\Pi^0_{A} \CI_{o}(\lambda_{i})(\hat{\mathbf{v}},\tau) = \mathcal{K}_{o,A}^0 ( \tilde{\Pi}_{o,A} \CD_{-1}(\lambda_i)(\hat{\mathbf{v}},\cdot))(\tau) 
= \mathcal{K}_{o,A}^0\left(\frac{\Upsilon_{\text{\scriptsize{root}}}^{\Psi_{o}}[\lambda_i]}{S_{\text{\scriptsize{root}}}(\lambda_i)}(\mathbf{v},\xi) \right)(\tau)  
\end{equs}
where one has,
\begin{equs}
&S_{\text{\scriptsize{root}}}(\lambda_0) =  S_{\text{\scriptsize{root}}}(\lambda_1) = 1, \\ 
&\Upsilon_{\text{\scriptsize{root}}}^{\Psi_{o}}[\lambda_0](\mathbf{v},\xi) = i\cnab^{-1}\Bigg(\left( e^{i\xi \cnab}\sin(\textstyle \frac12 v) \right) \, \left( e^{-i\xi \cnab} \cos(\textstyle \frac 12 \overline{v}) \right)\Bigg), \quad \\
& \Upsilon_{\text{\scriptsize{root}}}^{\Psi_{o}}[\lambda_1](\mathbf{v},\xi) =  i\cnab^{-1}\Bigg(\left( e^{i\xi \cnab}\cos(\textstyle \frac12 v) \right) \, \left( e^{-i\xi \cnab} \sin(\textstyle \frac 12 \overline{v}) \right)\Bigg).
\end{equs}
 1. When $A \supset (H^1\cap H^{\frac{d}{2} + \epsilon})(\T^d)^2$ or $A \supset (H^1_0\cap H^{\frac{d}{2} + \epsilon})(\Omega)^2$. \\
%
Given that $r, \ell = 0$, and $ v \not\in D(G)$, we do not have enough regularity to simply apply the Taylor-expansion \eqref{def_fullTalor}. Instead we perform a low-regularity based approximation of the integrals and apply the first point $ (i) $ of Definition~\ref{def_CK}.
From definition \eqref{def_CK} it then follows that 
$$
\CL_1 = \CL_{o} = i \cnab,  \ \CL_2 = -i\cnab, \ \CL_{\tiny{\text{dom}}} =-2i\cnab
$$
with $\CB_{o} = \cnab^{-1}$, and hence we have,
\begin{equs}
&\mathcal{K}_{o,A}^0\left( \Upsilon_{\text{\scriptsize{root}}}^{\Psi_{o}}[\lambda_0](\mathbf{v},\cdot) \right)(\tau)  = i \tau \cnab^{-1} \left( e^{i \tau \cnab} u_1 \right) \left(  e^{i \tau \cnab} \left( \varphi_1\big( -2 i\tau \cnab \big)   u_2 \right) \right)\\
&\mathcal{K}_{o,A}^0\left( \Upsilon_{\text{\scriptsize{root}}}^{\Psi_{o}}[\lambda_1](\mathbf{v},\cdot) \right)(\tau)  = i \tau \cnab^{-1} \left( e^{i \tau \cnab} u_3 \right) \left(  e^{i \tau \cnab} \left( \varphi_1\big( -2 i\tau \cnab \big) u_4 \right) \right).
\end{equs}
Plugging the above in \eqref{SG1} yields the first order low-regularity scheme \eqref{schemeSG1} for the Sine-Gordon equation.

\noindent 2. When $A = (H^1\cap H^{\frac{d}{2} + \epsilon})(\T^d)^2$ or $A = (H^1_0\cap H^{\frac{d}{2} + \epsilon})(\Omega)^2$\\
In this case we apply the Taylor expansion \eqref{def_fullTalor} with $r=\ell = 0$, to obtain:
\begin{equs}
\CK_{o,A}^0 & \left((\Upsilon_{\text{\scriptsize{root}}}^{\Psi_{o}}[\lambda_0] + \Upsilon_{\text{\scriptsize{root}}}^{\Psi_{o}}[\lambda_1])(\mathbf{v},\cdot) \right)(\tau) \\ & = i\cnab^{-1} \left(  \int_0^\tau  u_1 u_2 d\xi + \int_0^\tau  u_3 u_4 d\xi \right)\\
&= i\tau\cnab^{-1} (\sin(\textstyle \frac12 v) \cos(\textstyle \frac 12 \overline{v})+\cos(\textstyle \frac12 v)\sin(\textstyle \frac 12 \overline{v}),
\end{equs}
which thanks to \eqref{SG1} yields the first order scheme \eqref{schemeSG2}.
\\
\noindent \textbf{Local error analysis}: We follow the same steps as for the proof of Corollary~\ref{cor_schemeGP1}.
 Using the recursive formula in Definition~\ref{def:Llow}, one gets for $ T \in \lbrace  \lambda_0, \lambda_1 \rbrace $
\begin{equs}
\mathcal{L}^{0}_{\text{\tiny{low}}}(\CI_{o}( T ),\hat{\mathbf{v}},A)  =    \mathcal{L}^{-1,o}_{\text{\tiny{low}}}(  T, \hat{\mathbf{v}},A) +     R^0_{o,A}\left(\left(\tilde{\Pi}_{o,A}  T \right)(\hat{\mathbf{v}},\cdot)  \right) .
\end{equs}
Then by \eqref{local_error_1}, we get for $ \mathfrak{l}\in \lbrace 0,1 \rbrace $
\begin{equs}
\mathcal{L}^{-1,o}_{\text{\tiny{low}}}(\lambda_{\mathfrak{l}},\hat{\mathbf{v}},A)   =  \frac{\Upsilon_{\text{\scriptsize{root}}}^{\Psi_o}[T]}{{S_{\text{\scriptsize{root}}}(T)}}(\mathbf{v},0) V_{\mathfrak{l}}   
\end{equs}
which gives
\begin{equs}
\mathcal{L}^{-1,o}_{\text{\tiny{low}}}(\lambda_{0},\hat{\mathbf{v}},A) & = i\cnab^{-1}\Bigg(\left( e^{i\xi \cnab}\sin(\textstyle \frac12 v) \right) \, \left( e^{-i\xi \cnab} \cos(\textstyle \frac 12 \overline{v}) \right)\Bigg), \\  \mathcal{L}^{-1,o}_{\text{\tiny{low}}}(\lambda_{1},\hat{\mathbf{v}},A) & = i\cnab^{-1}\Bigg(\left( e^{i\xi \cnab}\cos(\textstyle \frac12 v) \right) \, \left( e^{-i\xi \cnab} \sin(\textstyle \frac 12 \overline{v}) \right)\Bigg).
\end{equs}
It remains to compute $ R^0_{o,A}\left(\left(\Pi_A  T \right)  \right)$ whose values will depend on $ A $. For $A \supset (H^1\cap H^{\frac{d}{2} + \epsilon})(\T^d)^2$ or $A \supset (H^1_0\cap H^{\frac{d}{2} + \epsilon})(\Omega)^2$
one gets
\begin{equs}
 R_{o,A}^0((\tilde{\Pi}_{o,A}\lambda_0)(\hat{\mathbf{v}},\cdot)) & = \mathcal{C}[ i\cnab^{-1} \mathcal{M}_{\lbrace 1,2 \rbrace},i \cnab](\sin(\textstyle \frac12 v) ,\cos(\textstyle \frac 12 \overline{v}))
 \\ 
 R_{o,A}^0((\tilde{\Pi}_{o,A}\lambda_1)(\hat{\mathbf{v}},\cdot)) & = \mathcal{C}[ i\cnab^{-1} \mathcal{M}_{\lbrace 1,2 \rbrace},i \cnab](\cos(\textstyle \frac12 v) ,\sin(\textstyle \frac 12 \overline{v})).
\end{equs} 
 For $A = (H^1\cap H^{\frac{d}{2} + \epsilon})(\T^d)^2$ or $A = (H^1_0\cap H^{\frac{d}{2} + \epsilon})(\Omega)^2$, one Taylor-expands  the operators which give:
\begin{equs}
& R_{o,A}^0 ((\tilde{\Pi}_{o,A}\lambda_0)(\hat{\mathbf{v}},\cdot))  = -\sin(\textstyle \frac12 v) \cos(\textstyle \frac 12 \overline{v})
- \cnab^{-1}\Bigg(\left(  \cnab \sin(\textstyle \frac12 v) \right) \, \left( \cos(\textstyle \frac 12 \overline{v}) \right)\Bigg) \\ &   
\quad + \cnab^{-1}\Bigg(\sin(\textstyle \frac12 v) \, \left( \cnab \cos(\textstyle \frac 12 \overline{v}) \right)\Bigg)
 \\ 
 & R_{o,A}^0((\tilde{\Pi}_{o,A}\lambda_1)(\hat{\mathbf{v}},\cdot))  = -\cos(\textstyle \frac12 v) \sin(\textstyle \frac 12 \overline{v})
- \cnab^{-1}\Bigg(\left(  \cnab \cos(\textstyle \frac12 v) \right) \, \left( \sin(\textstyle \frac 12 \overline{v}) \right)\Bigg) 
\\ & \quad + \cnab^{-1}\Bigg(\cos(\textstyle \frac12 v) \, \left( \cnab \sin(\textstyle \frac 12 \overline{v}) \right)\Bigg)
\end{equs} 
which concludes the proof.
\end{proof}
\begin{remark}
Similarly to Corollary \ref{cor:kg1} our general framework \eqref{scheme} also allows for low regularity approximations at higher order for Klein--Gordon equations.
\end{remark}

\appendix
\endappendix

\end{document}